\newtheorem{theorem}{Theorem}[section]
\newtheorem{lemma}[theorem]{Lemma}
\newtheorem{corollary}[theorem]{Corollary}
\theoremstyle{definition}
\newtheorem{example}[theorem]{Example}
\newcommand{\m}{\mathbf} 
\newcommand{\ra}{\mathbin{\rightarrow}}
\newcommand{\jn}{\vee}
\newcommand{\mt}{\wedge}
\newcommand{\ls}{\setbox0\hbox{$-$}
\mathbin{\hbox{$-$\kern-\wd0\raise2\dp0\hbox{$\cdot$}\kern.3\wd0\lower2\dp0\hbox{$\cdot$}}}}
\newcommand{\rs}{\setbox0\hbox{$-$}
\mathbin{\hbox{$-$\kern-\wd0\lower2\dp0\hbox{$\cdot$}\kern.3\wd0 \raise2\dp0\hbox{$\cdot$}}}}
\newcommand{\lsc}{\setbox0\hbox{$-$}
\mathbin{\hbox{$-$\kern-\wd0\raise2\dp0\hbox{$\cdot$}\kern.3\wd0\lower2\dp0\hbox{\phantom{$\cdot$}}}}}
\newcommand{\rsc}{\setbox0\hbox{$-$}
\mathop{\hbox{$-$\kern-\wd0\lower2\dp0\hbox{\phantom{$\cdot$}}\kern.3\wd0\raise2\dp0\hbox{$\cdot$}}}}
\newcommand{\lrsc}{\setbox0\hbox{$-$}
\mathop{\hbox{$-$\kern-\wd0\raise2\dp0\hbox{$\cdot$}\kern.3\wd0\raise2\dp0\hbox{$\cdot$}}}}
\newcommand{\diagcov}{\Yleft}
\newcommand{\preorder}{\trianglelefteq}
\journal{Journal of Algebra}
\begin{document}

\begin{frontmatter}

\title{Decidability of distributive $\ell$-pregroups}
\author[inst1]{Nikolaos Galatos}
\ead{ngalatos@du.edu}

\affiliation[inst1]{organization= {Department of Mathematics, University of Denver},
            addressline={2360 S. Gaylord St.}, 
            city={Denver},
            postcode={80208}, 
            state={CO},
            country={USA}}

\author[inst1]{Isis A. Gallardo}
\ead{isis.gallardo@du.edu}

\begin{abstract}
 We show that every distributive lattice-ordered pregroup can be embedded into a functional algebra over an integral chain, thus improving the existing  Cayley/Hol\-land-style embedding theorem. We use this to show that the variety of all distributive lattice-ordered pregroups is generated by the single functional algebra on the integers. Finally, we show that the equational theory of the variety is decidable. 
\end{abstract}



\begin{keyword}
lattice-ordered pregroups \sep decidability \sep equational theory \sep variety generation \sep residuated lattices \sep lattice-ordered groups \sep diagrams
\end{keyword}

\end{frontmatter}


\section{Introduction}
A \emph{pregroup} $(A,\cdot,1,^{\ell} ,^{r},\leq)$ is a partially
ordered monoid $(A,\cdot,1,\leq)$, where multiplication preserves the order and
$$
x^{\ell} x\leq1\leq xx^{\ell} \text{ and }xx^{r}\leq1\leq x^{r}x.
$$
Pregroups are of importance in mathematical linguistics; see \cite{La}, \cite{Bu}, \cite{Ba}, for example. 
A \emph{lattice-ordered pregroup} (\emph{$\ell$-pregroup}) is an algebra  $(A,\wedge,\vee,\cdot,1,^{\ell} ,^{r})$,
such that $(A,\wedge,\vee)$ is a lattice and $(A,\cdot,1,^{\ell} ,^{r},\leq)$ is a pregroup, where $\leq$ is the lattice order.
Moreover, $\ell$-pregroups are precisely the involutive residuated lattices for which multiplication is the same as its De Morgan dual (known as addition); see \cite{GJKO} for a study of residuated lattices and their applications to substructural logics. Therefore, $\ell$-pregroups can be compared to other residuated structures such as Boolean algebras, relation algebras, the residuated lattice of ideals of a ring with unit, Heyting algebras, and MV-algebras, to name a few. A \emph{distributive} $\ell$-pregroup is an $\ell$-pregroup where the lattice
reduct is distributive. In an $\ell$-pregroup the demand that multiplication preserves the order is actually equivalent to asking that multiplication distributes over join/meet, so the class of all distributive $\ell$-pregroups
forms a variety, which we denote by $\mathsf{DLPG}$. 

 Prominent examples of distributive $\ell$-pregroups are lattice-ordered groups ($\ell$-groups): algebras $(G, \mt, \jn, \cdot, ^{-1}, 1)$ with a group $(G, \cdot, ^{-1}, 1)$ and with a lattice $(G, \mt, \jn)$ reduct where multiplication is order-preserving (equivalently it distributes over join or meet).  
 It turns out that $\ell$-groups are precisely the (distributive) $\ell$-pregroups that satisfy $x^\ell=x^r$; i.e., where the two inverses (right and left) coincide. Lattice distributivity follows from the axioms of $\ell$-groups (due to the existence of a single inverse), but it is still an open problem whether all $\ell$-pregroups are distributive; see \cite{GJ} and \cite{GJKP} for partial results in that direction.  Lattice-ordered groups have been studied extensively (see \cite{AF}, \cite{Da}, \cite{GHo}, \cite{KM}) and they enjoy a Cayley-type theorem: every $\ell$-group can be embedded into a \emph{symmetric} one (\emph{Holland's embedding theorem} \cite{Ho-em}). Here, the \emph{symmetric} $\ell$-group over a totally-ordered set $(X, \leq)$ consists of all permutations of $X$ that are order-preserving; this symmetric $\ell$-group is a subgroup of the symmetric group on $X$ and the order is given pointwise. 
 
  A similar Cayley/Holland-type embedding theorem is also known for distributive $\ell$-pregroups: every distributive $\ell$-pregroup can be embedded into a \emph{symmetric}/\emph{functional} one; see \cite{GH}. For example, the \emph{functional} $\ell$-pregroup $\m F(\mathbb{Z})$ on the totally-ordered set $\mathbb{Z}$ consists of all finite-to-one  (the preimage of every singleton is a finite set) order-preserving functions; this contains the symmetric $\ell$-group of all translations, $x \mapsto x+n$ for various $n \in \mathbb{Z}$, but it is a much bigger algebra. More generally, the \emph{functional} $\ell$-pregroup  $\m F(\m \Omega)$ on a chain $\mathbf{\Omega}$ consists of the (order-preserving) functions on  $\mathbf{\Omega}$  that have \emph{residuals} and \emph{dual residuals} of all orders, under composition and pointwise order. Therefore, distributive $\ell$-pregroups are a very wide generalization of $\ell$-groups, but at the same time they are amenable to some of the same analysis. 

  Furthermore, \emph{Holland's generation theorem} \cite{Ho-gen} shows that the single symmetric $\ell$-group on the chain $\mathbb{R}$ generates the whole variety of  $\ell$-groups; it turns out that this symmetric $\ell$-group is equal to $\m F(\mathbb{R})$. A main result of our paper is a generation theorem for distributive $\ell$-pregroups: the variety $\mathsf{DLPG}$ is generated by $\m F(\mathbb{Z})$. This is quite surprising, as it implies that every equation that fails in $\m F(\mathbb{R})$ also fails in $\m F(\mathbb{Z})$, despite the fact that the chain $\mathbb{Z}$ is countable and discrete; one would expect that a lexicographic product of $\mathbb{R}$ and  $\mathbb{Z}$ would be needed instead. 

  As mentioned above, every distributive $\ell$-pregroup embeds into $\m F(\m \Omega)$
for some chain $\mathbf{\Omega}$. In Section~\ref{s: integral} we improve this result by proving that the chain $\m \Omega$ can be taken to be \emph{integral}: every element of the chain is contained in an interval isomorphic to the chain of the integers. We obtain this theorem by defining an integral extension $\overline{\m \Omega}$ of $\m \Omega$ and constructing an $\ell$-pregroup embedding of $\m F(\m \Omega)$ into $\m F(\overline{\m \Omega})$. Therefore, if an equation fails in $\mathsf{DLPG}$ then it fails in $\m F(\m \Omega)$ for some integral chain $\m \Omega$. This improved embedding theorem requires quite a bit of work, but it serves as the first step toward the generation by $\m F(\mathbb{Z})$.

In Section~\ref{s: diagrams} we show that if an  equation fails in 
   $\m F(\m \Omega)$ for some integral chain $\m \Omega$, then there is a \emph{diagram} where it fails; a diagram consists of a finite chain with designated covering pairs and partial functions on it and is inspired by \cite{HM}. Failure in a diagram entails that the partial functions are defined enough so as to demonstrate the failure of the equation. We also show that there are only finitely many possible diagrams that demonstrate the failure of an equation and describe an algorithm that produces them all. We note that in diagrams for $\ell$-groups the single inverse of a partial function is automatically obtained by inverting the direction of the arrows. However, calculating the two inverses in diagrams for distributive $\ell$-pregroups is much more involved. Within the proofs, this ends up requiring an expansion of the language to include two more constants corresponding to the successor and the predecessor functions on an integral chain, which in turn guide the production of the required set of points of the diagram and the corresponding arrows; the associated inductive argument ends up taking a digression into this expanded language. 

   In Section~\ref{s: F(Z)} we close the cycle by showing that if an equation fails in a diagram, then it also fails in $\m F(\mathbb{Z})$; hence also in $\mathsf{DLPG}$. Together with the results of Section~\ref{s: diagrams} this shows that an equation holds in  $\mathsf{DLPG}$ iff it holds in $\m F(\mathbb{Z})$; therefore $\m F(\mathbb{Z})$ generates the variety $\mathsf{DLPG}$. Actually, we do a bit better by showing that the subalgeba of $\m F(\mathbb{Z})$ that consists of the functions of finite support also generates the whole variety.
   As another consequence of closing the cycle, we have that an equation fails in  $\mathsf{DLPG}$ iff it fails in one of the finitely-many diagrams produced by the algorithm of Section~\ref{s: diagrams}. Therefore, the equational theory of  $\mathsf{DLPG}$ is decidable.

   Some of the results in Sections~\ref{s: diagrams} and~\ref{s: F(Z)} are actually phrased in terms of \emph{compatible surjections}, instead of diagrams, as they allow for tighter control in the formulation of the decidability algorithm.  
 In Section 5 we show that compatible surjections are in bijective correspondence with \emph{compatible preorders}, showing an alternative and equivalent way of viewing failures of equations. Moreover, since compatible preorders for $\ell$-groups are connected to right total orders on (free) groups, see \cite{CM} and \cite{CGMS}, this allows for their comparative study in the case of distributive $\ell$-pregroups. We conclude with some examples of invalid equations, by giving the diagram, the compatible surjection, and the compatible preorder that exhibit the failure. 

\section{Embedding into $\mathbf{F(\Omega)}$, where $\mathbf{\Omega}$ is integral}\label{s: integral}

In this section, we improve on the Cayley/Holland-type embedding theorem of \cite{GH} and show that every distributive $\ell$-pregroup can be embedded in $\mathbf{F(\Omega)}$ for some \emph{integral} chain $\mathbf{\Omega}$, i.e., a chain where every element is contained in an interval isomorphic to $\mathbb{Z}$.

\subsection{Residuals and dual residuals}

Given functions $f:\mathbf{P}\rightarrow\mathbf{Q}$ and $g:\mathbf{Q}\rightarrow\mathbf{P}$ between posets, we say that $g$ is a \emph{residual} for $f$, or that $f$ is a \emph{dual residual} for $g$, or that $(f,g)$ forms a \emph{residuated pair} if 
\[
f(a)\leq b\Leftrightarrow a\leq g(b)\text{, for all }a\in P,b\in Q.
\]

The residual of $f$ is unique when it exists, and we denote it by $f^r$; in this case, we have
\[
f^{r}(b)=\max \{ a\in \Omega :f(a)\leq b \}.
\]
Also, the dual residual of $f$ is unique, when it exists, and we denote it by $f^{\ell}$; in this case, we have
\[
f^{\ell}(a)=\min \{ b\in \Omega :a\leq f (b) \}.
\]
If $f$ has a residual, then $f$ is called \emph{residuated} and if it has a dual residual, it is called \emph{dually residuated}.

It is well known that a function $f:\mathbf{P}\rightarrow\mathbf{Q}$ is residuated iff
it is order-preserving 
and, for all $b \in Q$, the set $\{ a\in P :f(a)\leq b \} $ has a maximum in $\mathbf{Q}$; also, it is dually residuated iff
it is order-preserving 
and, for all $b \in Q$, the set $\{ b\in P :a\leq f (b) \}$ has a minimum. Also, if a function is residuated then it preserves all existing joins, and if it is dually residuated then it preserves all existing meets. We refer to \cite{GJKO} for basics of residuation theory.

As usual, we denote the inverse image of a set $X$ via a function $f$ by $f^{-1}[X]$; if $X$ is a singleton instead of, $f^{-1}[\{a\}]$ we write $f^{-1}[a]$. Also, we write $a \prec b$, when $a$ is covered by $b$ (i.e., when $b$ is a cover of $a$).
The following lemma shows certain special properties of residuation when the posets are chains. 

\begin{lemma}\label{l: facts of fl and fr}
     If $f$ is an order-preserving map on a chain $\mathbf{\Omega}$ with residual $f^r$ and dual residual $f^{\ell}$, and $a,b \in \Omega$, then:
    \begin{enumerate}
            \item $f^{\ell}ff^{\ell}=f^{\ell}$, $f^{r}ff^{r}=f^{r}$, 
        $ff^{\ell}f=f$ and $ff^{r}f=f$.
        \item We have $b<f(a)$ iff $f^r(b)<a$. Also, $f(b)<a$ iff $b<f^{\ell}(a)$.
        \item If  $a\in f[\Omega]$, then $f^{\ell}(a)\leq f^r(a)$ and $f^{-1}[a]=[f^{\ell}(a),f^r(a)]$.
        \item  If   $a\notin f[\Omega]$, then $f^r(a)\prec f^{\ell}(a)$ and $a\in (ff^r(a), ff^{\ell}(a))$.
    \end{enumerate}
\end{lemma}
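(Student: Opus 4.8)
The plan is to prove the four items in order, relying throughout on the defining adjunction $f(a) \leq b \Leftrightarrow a \leq f^r(b)$ and its dual $f(b) \geq a \Leftrightarrow b \geq f^\ell(a)$, together with the explicit formulas $f^r(b) = \max\{a : f(a) \leq b\}$ and $f^\ell(a) = \min\{b : a \leq f(b)\}$. Since $\m \Omega$ is a chain, every strict inequality is the negation of the reverse non-strict inequality, which is the mechanism that will convert the adjunctions of item~(2) into the order-theoretic facts of the later items.

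For item~(1), I would argue the four equations purely from the adjunction. To get $f f^r f = f$, note first that $f^r f \leq \mathrm{id}$ fails in general but $f f^r \leq \mathrm{id} \leq f^r f$ holds: applying $f(a) \leq b \Leftrightarrow a \leq f^r(b)$ with $b = f(a)$ gives $a \leq f^r f(a)$, so $\mathrm{id} \leq f^r f$; and with $a = f^r(b)$ using reflexivity gives $f f^r(b) \leq b$, so $f f^r \leq \mathrm{id}$. Composing on the left and right with the order-preserving $f$ then squeezes $f f^r f$ between $f$ and $f$. The identity $f^r f f^r = f^r$ follows symmetrically from the same two inequalities, and the two statements involving $f^\ell$ are the order-duals, using $f^\ell f \leq \mathrm{id} \leq f f^\ell$. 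Item~(2) is then the chain-theoretic contrapositive of the adjunction: $b < f(a)$ is the negation of $f(a) \leq b$, which by the adjunction is the negation of $a \leq f^r(b)$, i.e.\ $f^r(b) < a$ since $\m \Omega$ is totally ordered; the second equivalence is dual.

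For item~(3), assume $a \in f[\Omega]$, say $a = f(c)$. I would show $f^{-1}[a] = [f^\ell(a), f^r(a)]$ by double inclusion. The formulas give $f^\ell(a) = \min\{b : a \leq f(b)\}$ and $f^r(a) = \max\{b : f(b) \leq a\}$; since $a$ is actually attained, both the minimum and the maximum lie in $f^{-1}[a]$, which forces $f^\ell(a) \leq f^r(a)$ and shows the endpoints map to $a$. For an arbitrary $x$ in the closed interval, monotonicity of $f$ sandwiches $f(x)$ between $f f^\ell(a) = a$ and $f f^r(a) = a$ (using the attainment to evaluate these composites at $a$), giving $f(x) = a$; conversely any preimage of $a$ satisfies both defining inequalities and hence lies between the min and the max. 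Item~(4) is the subtle one and I expect it to be the main obstacle, since here $a \notin f[\Omega]$ means the adjunction inequalities are never tight. The idea is that $f^r(a) = \max\{b : f(b) \leq a\}$ and $f^\ell(a) = \min\{b : a \leq f(b)\}$, and because $a$ is skipped, the value $f^r(a)$ sits just below the jump and $f^\ell(a)$ just above it; I would show $f^r(a) < f^\ell(a)$ first, then rule out any point strictly between them by arguing such a point would witness a preimage value straddling $a$, contradicting $a \notin f[\Omega]$ or the extremality of the min/max --- this establishes the covering $f^r(a) \prec f^\ell(a)$. The strict bounds $f f^r(a) < a < f f^\ell(a)$ then follow from item~(2) applied at the endpoints: $f^r(a) < f^\ell(a)$ unwinds via the equivalences to the two strict inequalities, the key being that equality $f f^r(a) = a$ would put $a \in f[\Omega]$. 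The careful bookkeeping of strict versus non-strict inequalities across the jump, and confirming no intermediate point is missed, is where the real care is needed.
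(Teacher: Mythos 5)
Your proposal is correct and takes essentially the same approach as the paper: item (2) as the contrapositive of the adjunction on a chain, item (3) by double inclusion from the min/max formulas plus monotonicity, and item (4) from the same extremality and order-preservation arguments. The only cosmetic differences are that you prove item (1) directly from $ff^r \leq \mathrm{id} \leq f^r f$ and its dual where the paper cites \cite{GJKO}, and in item (4) you establish the covering $f^r(a) \prec f^\ell(a)$ before the strict bounds $ff^r(a) < a < ff^\ell(a)$, whereas the paper derives the strict bounds first---both orderings go through.
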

\begin{proof}
The proof of (1) can be found in \cite{GJKO}.

    (2) Suppose $a,b\in \Omega$. Since $\mathbf{\Omega }$ is a chain $b<f(a)$ iff $f(a)\nleq b$. Using residuation, we can see that $f(a)\nleq b$ iff $a\nleq f^r(b)$ iff $f^r(b)<a$, since $\mathbf{\Omega }$ is a chain. Similarly, we can show that $f(b)<a$ iff $b<f^{\ell}(a)$.

    (3) First note that if $z \in f^{-1}[ a ]$, then  $f(z)=a$, which ensures that $f^{\ell} (a)= \min\{ x\in\Omega:f(x)\leq a\} \leq z$ and $f^{r}(a)=\max\{ x\in\Omega:f(x)\geq a\} \geq z$; thus $f^{\ell}(a)\leq z\leq f^r(a)$. Therefore, $f^{-1}[ a ]\subseteq [f^{\ell} (a),f^{r}(a)]$. In particular, since  $a\in f[\Omega]$, we get that $f^{-1}[ a ]$ is non-empty, hence $[f^{\ell} (a),f^{r}(a)]$ is non-empty and    
     $f^{\ell}(a)\leq f^r(a)$. Moreover, let $b$ be an element of $\Omega$ such that $f(b)=a$. Then, by (1) we  get $f(f^r(a))= ff^rf(b)= f(b)=a$ and $f(f^{\ell}(a))= f f^\ell f(b)= f(b)=a$. So, if $f^\ell(a) \leq x \leq f^r(a)$, then 
$a=ff^\ell(a) \leq f(x) \leq ff^r(a)=a$, so $x \in f^{-1}[ a ]$; hence $[f^{\ell} (a),f^{r}(a)]\subseteq f^{-1}[ a ]$. Thus, $f^{-1}[ a ]= [f^{\ell} (a),f^{r}(a)]$.
  
    (4) If $a \not \in f[\Omega]$, then $f^r(a)=\max \{ b\in \Omega :f(b)\leq a \}$ yields $f(f^r(a))<a$. Similarly we get $a<f(f^{\ell}(a))$, so we obtain $a\in (ff^{r}(a),ff^{\ell}(a))$.

    In particular, we cannot have $f^\ell(a)\leq f^r(a)$, because the order preservation of $f$ would give $ff^\ell(a)\leq ff^r(a)$, a contradiction; thus $f^r(a)< f^{\ell}(a)$, since $\Omega$ is a chain.
    Furthermore, if $f^r(a) \leq z \leq f^{\ell}(a)$, then   by residuation $a\leq f(z)\leq a$, so $f(z)=a$, a contradiction. Hence, $f^r(a)\prec f^{\ell}(a)$.
\end{proof}

We now provide a characterization for maps on a chain that are residuated and dually residuated.

\begin{lemma} \label{l: bounded preimage}
Given a chain $\mathbf{\Omega}$, a map $f$  on $\Omega$ is residuated and dually residuated iff $f$ is order-preserving and for all $a\in \Omega$: 
\begin{enumerate}
    \item If $a\in f[\Omega]$, then $f^{-1}[ a]=[b,c]$, for some $b \leq c$ in $\Omega$.
   \item  If $a\notin f[\Omega]$, there exists $b,c\in\Omega$ such that, $b\prec c$ and $ \ensuremath{a\in(f(b),f(c))}$.
\end{enumerate}
In the case (1), $f^\ell(a)=b$ and $f^r(a)=c$; in the case (2), $f^\ell(a)=c$ and $f^r(a)=b$.
\end{lemma}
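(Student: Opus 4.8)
The plan is to treat the two directions separately: the forward implication is essentially a repackaging of the previous lemma, while the converse reduces to verifying the standard max/min criterion for residuation.

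For the forward direction, assume $f$ is residuated and dually residuated. A residuated map is order-preserving (this is built into the criterion quoted above), so Lemma~\ref{l: facts of fl and fr} applies. Clauses (1) and (2), together with the identification of $b,c$ in terms of $f^{\ell}$ and $f^{r}$, are then immediate from parts (3) and (4) of that lemma. When $a\in f[\Omega]$, part (3) gives $f^{-1}[a]=[f^{\ell}(a),f^{r}(a)]$ with $f^{\ell}(a)\le f^{r}(a)$, so I take $b=f^{\ell}(a)$ and $c=f^{r}(a)$. When $a\notin f[\Omega]$, part (4) gives $f^{r}(a)\prec f^{\ell}(a)$ and $a\in(ff^{r}(a),ff^{\ell}(a))$, so I take $b=f^{r}(a)$ and $c=f^{\ell}(a)$; this also explains the reversal of roles in the final sentence of the statement.

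For the converse, assume $f$ is order-preserving and satisfies (1) and (2), and verify the two well-known criteria: that $\{x:f(x)\le a\}$ has a maximum (residuation) and $\{x:a\le f(x)\}$ has a minimum (dual residuation), for every $a\in\Omega$. I split into cases. If $a\in f[\Omega]$ with $f^{-1}[a]=[b,c]$, then since $\Omega$ is a chain and $f$ is order-preserving, any $x>c$ forces $f(x)\ge f(c)=a$ and $x\notin[b,c]$, hence $f(x)>a$; so $c=\max\{x:f(x)\le a\}$, and symmetrically $b=\min\{x:a\le f(x)\}$. If $a\notin f[\Omega]$ with $b\prec c$ and $f(b)<a<f(c)$, the covering relation does the work: since nothing lies strictly between $b$ and $c$, any $x>b$ upgrades to $x\ge c$ and hence $f(x)\ge f(c)>a$, giving $b=\max\{x:f(x)\le a\}$; dually $c=\min\{x:a\le f(x)\}$. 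By the defining formulas for residuals these maxima and minima are exactly $f^{r}(a)$ and $f^{\ell}(a)$, which simultaneously establishes the final identifications.

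The only genuinely delicate point is the second case of the converse, where $a$ is a ``gap'' value not attained by $f$. Here the argument hinges on $b\prec c$ being a covering pair, so that $x>b$ upgrades to $x\ge c$; this is precisely what yields an attained maximum of $\{x:f(x)\le a\}$ at $b$ (rather than a supremum that fails to be attained) and, dually, an attained minimum at $c$. It is worth flagging that in this case the residual and dual residual swap their usual order, $f^{r}(a)=b\prec c=f^{\ell}(a)$, so one should not expect $f^{\ell}\le f^{r}$ pointwise.
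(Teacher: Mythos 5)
Your proof is correct and takes essentially the same approach as the paper: the forward direction is read off from Lemma~\ref{l: facts of fl and fr}(3,4) with the same identifications of $b$ and $c$, and the converse verifies the max/min criterion for (dual) residuation via the same case split, including the key covering-pair step that upgrades $x>b$ to $x\geq c$ when $a\notin f[\Omega]$. The only difference is cosmetic: you establish maximality directly (showing $x>c$ forces $f(x)>a$) where the paper argues by contradiction.
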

\begin{proof} The forward direction follows from Lemma~\ref{l: facts of fl and fr}(3,4). For the reverse direction, we show that $\min \{ z\in\Omega:\, a \leq f(z)\}$ and $\max \{ z\in\Omega:\,f(z) \leq a\}$ exist, for all $a \in \Omega$.

If  $a\in f[\Omega]$, by (1) we have $c=\max \{ z\in\Omega:\,f(z)= a\}$. (We used the fact that $b \leq c$, so $[b,c]$ is not empty and $c \in [b,c]$.) Note that for all $z \in \Omega$ with $f(z) \leq a$, we have $z \leq c$: otherwise $c < z$ (since $\Omega$ is a chain) and $a=f(c) \leq f(z)\leq a $ (since  $f$ is order-preserving), violating that $c=\max \{ z\in\Omega:\,f(z)= a\}$. So, we get  $c=\max \{ z\in\Omega:\,f(z) \leq a\}$. Likewise, $b = \min \{ z\in\Omega:\, a \leq f(z)\}$.

 If $a\notin f[\Omega]$, then by (2) we have $a\in(f(b),f(c))$, therefore $f(b) < a$ and $b \in \{ z\in\Omega:\,f(z)\leq a\}$. Also, for all $z \in \Omega$ with $f(z) \leq a$, we have $z \leq b$:  otherwise $b < z$ (since $\Omega$ is a chain), so $c \leq z$ (since $b \prec c$ and $\Omega$ is a chain) and $a < f(c) \leq f(z)\leq a$ (since  $f$ is order-preserving), a contradiction. Therefore, $b = \max\{ z\in\Omega:\,f(z)\leq a\}$. Likewise,  $c=\min \{ z\in\Omega:\,a \leq f(z)\}$.
\end{proof}

If $f$ is residuated with residual $f^r$ and $f^r$ is residuated with residual $f^{rr}:=(f^r)^r$, then we say that $f^{rr}$ is the \emph{second-order} residual of $f$; we also write $f^{r^2}$ for $f^{rr}$. More generally, $f^{r^n}$ is the $n$th-order residual of $f$, if it exists, and $f^{\ell^n}$ is the $n$th-order dual residual of $f$, if it exists.
    For any chain $\mathbf{\Omega}$, \emph{$F(\mathbf{\Omega})$} denotes the set of all (order-preserving) functions on $\m \Omega$ with residuals of all orders and dual residuals of all orders.

\subsection{Integral chains and limit points}

Given a chain $\mathbf{\Omega}$, we say that a point $a\in\Omega$  \emph{has a $k$-cover} (or \emph{has a cover of order $k$}), if $k \in \mathbb{Z}^+$ and there exist  $a_{1},a_{2}\ldots,a_{k}\in\Omega $ with  $a \prec a_1 \prec \cdots \prec a_{k}$, or if $k \in \mathbb{Z}^-$ and there exist $a_{-1},a_{-2}\dots,a_{k}\in\Omega $ with  $a_k \prec \cdots \prec a_{-1} \prec  a$. Note that these elements are unique, since $\mathbf{\Omega}$ is a chain.  In this case, we call $a_k$ the \emph{$k$-cover} of $a$ and denote it by $a + k$; we also define $a + 0 = a$.

An element in a chain $\m \Omega$ is said to be \emph{integral} if it has a $k$-cover for all $k \in \mathbb{Z}$; equivalently, it belongs to an interval of $\m \Omega$ that is isomorphic to the integers. A chain $\mathbf{\Omega}$ is said to be \emph{integral} if all of its elements are integral; equivalently, $\m \Omega$ is a lexicographic product of $\mathbb{Z}$ with some chain.

\medskip

Surprisingly, when the chain is integral,  if a function is residuated and dually residuated, then it has residuals and dual residuals of all orders. This demonstrates one of the benefits of working with integral chains.

\begin{lemma}\label{l:1&2 in integral}
 If $f$ is a map on an integral chain $\mathbf{\Omega}$, then  $f\in F(\mathbf{\Omega})$ iff $f$ is residuated and dually residuated.
\end{lemma}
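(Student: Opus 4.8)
The forward direction is immediate: if $f$ has residuals and dual residuals of all orders, then in particular $f^r$ and $f^\ell$ exist, so $f$ is residuated and dually residuated. The plan for the converse is to isolate a single inductive step and then iterate it. Precisely, I would prove the \emph{one-step claim}: if $g$ is a residuated and dually residuated map on the integral chain $\mathbf{\Omega}$, then both $g^r$ and $g^\ell$ are again residuated and dually residuated. Granting this, a routine induction finishes the proof. Starting from $f$, the one-step claim gives that $f^r$ and $f^\ell$ are residuated and dually residuated, hence $f^{r^2}$ and $f^{\ell^2}$ exist and are again of the same type, and so on; thus $f^{r^n}$ and $f^{\ell^n}$ exist for every $n$, i.e. $f\in F(\mathbf{\Omega})$.

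To prove the one-step claim for $g^r$, I would verify the two conditions of Lemma~\ref{l: bounded preimage}, using that $g^r$ is order-preserving (as every residual is). All the relevant computations exploit integrality, so that every point $a$ has a cover $a+1$ and a cocover $a-1$. First, I would identify the image: for $c\in\Omega$ one has $c\in g^r[\Omega]$ iff $g(c)<g(c+1)$; indeed, if $g(c)<g(c+1)$ then $g^r(g(c))=c$, whereas if $g(c)=g(c+1)$ then any $b$ with $g^r(b)\geq c$ satisfies $b\geq g(c)=g(c+1)$ and hence $g^r(b)\geq c+1$, so $c$ is skipped. Second, for $c\in g^r[\Omega]$ I would compute the preimage $(g^r)^{-1}[c]=\{b: g(c)\leq b< g(c+1)\}$. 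This is exactly the place where integrality is essential: since $g(c+1)$ has a predecessor, this half-open set equals the closed interval $[g(c),g(c+1)-1]$, which is nonempty because $g(c)<g(c+1)$; this yields condition (1) and also identifies $(g^r)^\ell(c)=g(c)$ and $(g^r)^r(c)=g(c+1)-1$.

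Third, for $a\notin g^r[\Omega]$, equivalently $g(a)=g(a+1)$, I would take the covering pair $b=g(a)-1\prec g(a)=c$. Using Lemma~\ref{l: facts of fl and fr}(2) in the form $g^r(x)<a\Leftrightarrow x<g(a)$, the inequality $g(a)-1<g(a)$ gives $g^r(b)=g^r(g(a)-1)<a$, while $g(a+1)=g(a)$ forces $a+1\in\{z:g(z)\leq g(a)\}$ and hence $g^r(c)=g^r(g(a))\geq a+1>a$; therefore $a\in(g^r(b),g^r(c))$, which is condition (2). By Lemma~\ref{l: bounded preimage}, $g^r$ is then residuated and dually residuated. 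The analogous statement for $g^\ell$ follows by the order-dual argument (interchanging $\max$/$\min$, $\leq$/$\geq$, residual/dual residual, and cover/cocover), noting that the order dual of an integral chain is again integral.

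The main obstacle, and the only point where integrality is genuinely used, is the second step: showing that the preimages of $g^r$ are \emph{closed} intervals rather than merely half-open ones. On a dense chain such as $\mathbb{R}$, the set $\{b: g(c)\leq b<g(c+1)\}$ need not have a maximum, so $g^r$ could fail to be dually residuated even when $g$ is both residuated and dually residuated; integrality repairs this precisely because $g(c+1)$ always has an immediate predecessor, turning the supremum into an attained maximum and thereby feeding the induction.
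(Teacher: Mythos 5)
Your proof is correct and takes essentially the same route as the paper's: both reduce the statement to a one-step claim --- that the residual and dual residual of a residuated and dually residuated map on an integral chain are again residuated and dually residuated --- verified by checking the two conditions of Lemma~\ref{l: bounded preimage} (with integrality supplying the needed covers), followed by an induction through all orders. Your explicit description of the image of $g^r$ as $\{c : g(c)<g(c+1)\}$ and of the preimages as the closed intervals $[g(c),\, g(c+1)-1]$ is a slightly cleaner bookkeeping than the paper's corresponding computation for $f^{\ell}$ (which splits on whether $|(f^{\ell})^{-1}[a]|>1$), but the underlying argument is the same.
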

\begin{proof}
For the non-trivial direction, we prove first that if a map $f:\Omega\rightarrow\Omega$ is dually residuated (hence $f^\ell$ exists), then $f^{\ell}$ is also dually residuated (hence $f^{\ell \ell}$ exists). By  Lemma~\ref{l: bounded preimage}, it suffices to show that $f^{\ell}$ satisfies (1) and (2) of the lemma.

(1) Suppose $a\in f^{\ell}[\Omega]$. If $|(f^{\ell})^{-1}[ a ]|=1$,  then $(f^{\ell})^{-1}[ a]=\{b\}=[b,b]$ for some $b\in\Omega$.
If $|(f^{\ell})^{-1}[a]|>1$, there exist $x,y\in\Omega$ such that $x<y$ and $f^{\ell}(x)=f^{\ell}(y)=a$. 
Then  $x\notin f[\Omega]$, because otherwise $x=f(b)$ for some $b$ and, by Lemma~\ref{l: facts of fl and fr}(1,2), $x<y$ iff $f(b)<y$ iff $ff^{\ell}f(b)<y$ iff $f^{\ell}f(b)<f^{\ell}(y)$ iff $f^{\ell}(x)<f^{\ell}(y)$, which is a contradiction. 
So, by Lemma~\ref{l: facts of fl and fr}(4), $f^r(x)\prec f^{\ell}(x)$. We will show that $(f^{\ell})^{-1}[a]=[ff^r(x)+1,f(a)]$. Indeed, using Lemma~\ref{l: facts of fl and fr}(2) we get that for all $z\in\Omega$,
$z \in (f^{\ell})^{-1}[a]$
iff $f^\ell(z)=a$ 
iff  ($a\leq f^{\ell}(z)$ and $f^{\ell}(z)\leq a$) 
iff ($f^{\ell}(x)\leq f^{\ell}(z)$ and $z\leq f(a)$) 
iff ($f^r(x)<f^{\ell}(z)$ and $z\leq f(a)$) 
iff ($ff^r(x)<z\leq f(a)$) 
iff $ff^r(x)+1\leq z\leq f(a)$
iff $z \in [ff^r(x)+1,f(a)]$. 

(2) Suppose $a\not \in f^{\ell}[\Omega]$. 
Clearly, $f(a)\prec f(a)+1$; we will show that $a\in(f^{\ell}f(a),f^{\ell}(f(a)+1))$. Since $a\notin f^{\ell}[\Omega]$ and $f^{\ell}f(a)\leq a$, we get that $f^{\ell}f(a)<a$. Also,  $f(a) < f(a)+1$ yields $a < f^{\ell}(f(a)+1)$ by Lemma~\ref{l: facts of fl and fr}(2).

A dual argument shows that for a map $f$, if $f^r$ exists then $f^{rr}$ also exists. Now, a simple induction shows that if $f$ is residuated and dually residuated, then residuals and dual residuals of $f$ of all orders exist, hence $f\in F(\mathbf{\Omega})$. 
\end{proof}

 \begin{corollary}\label{c: intF(Omega)}
  If $f$ is a map on an integral chain $\mathbf{\Omega}$, then  $f\in F(\mathbf{\Omega})$ iff $f$ is order-preserving and satisfies the two conditions of Lemma~\ref{l: bounded preimage}.
 \end{corollary}

In integral chains, every element has a lower cover and an upper cover, so no element is a limit point. In arbitrary chains, the existence of limit points creates complications which we will address. 
 Given a chain $\mathbf{\Omega}$, we define:
\[
\Omega^{-}	:=\{ a\in\Omega:\,a=\underset{b<a}{\bigvee}b\},
  \quad 
  \Omega^{+}	:=\{ a\in\Omega:\,a=\underset{a<b}{\bigwedge}b\} 
 \;\; \text{ and } \;\; 
\Omega^{\pm}:=\Omega^{-}\cup\Omega^{+},
\]
the sets of \emph{limit points from below}, \emph{limit points from above} and \emph{limit points}, respectively.

The following lemma shows that maps that are residuated and dually residuated on a chain $\m \Omega$ preserve and reflect
certain elements of $\Omega^-$ and $\Omega^+$.

\begin{lemma}\label{l:about limit points}
 Let $f$ be  a residuated and dually residuated map on a chain $\mathbf{\Omega}$ and $a\in\Omega$.
\begin{enumerate}
  \item  If $a=f^{\ell}f(a)$, then: $a\in\Omega^{-}$ iff $f(a)\in\Omega^{-}$. 

   \item  If $a=f^{r}f(a)$, then: $a\in\Omega^{+}$ iff $f(a)\in\Omega^{+}$. 
 
 \item  We have $f^{\ell}(a)\in\Omega^{-}$  iff  ($a \in f[\Omega]$ and $a\in\Omega^{-}$).
 \item We have $f^r(a)\in\Omega^{+}$  iff  ($a \in f[\Omega]$ and $a\in\Omega^{+}$).

\end{enumerate}
\end{lemma}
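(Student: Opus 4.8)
The plan is to establish parts (1) and (2) first and then obtain (3) and (4) from them by a short reduction. Parts (1) and (2) are order-duals of each other (interchange $\leq$, swap $\bigvee \leftrightarrow \bigwedge$, $f^{\ell} \leftrightarrow f^{r}$, and $\Omega^{-}\leftrightarrow\Omega^{+}$), so I would concentrate on (1). The two facts I would lean on throughout are that a residuated map preserves all existing joins and a dually residuated map preserves all existing meets; since $f$ is both, $f$ preserves joins and meets. Moreover, because $(f^{\ell},f)$ and $(f,f^{r})$ are residuated pairs, $f^{\ell}$ is itself residuated and hence preserves joins, while $f^{r}$ is itself dually residuated and hence preserves meets. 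I would also repeatedly use that the hypothesis $a=f^{\ell}f(a)$ says exactly that $a=\min f^{-1}[f(a)]$, equivalently that $f(b)<f(a)$ for every $b<a$.

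For the forward direction of (1), assume $a\in\Omega^{-}$, so $a=\bigvee_{b<a}b$ (and this set is nonempty). Since $f$ preserves joins, $f(a)=\bigvee_{b<a}f(b)$, and by the reformulation of the hypothesis each $f(b)<f(a)$; hence $\{f(b):b<a\}\subseteq\{c:c<f(a)\}$, and taking joins gives $f(a)\leq\bigvee_{c<f(a)}c\leq f(a)$, so $f(a)\in\Omega^{-}$.

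For the backward direction I would apply $f^{\ell}$ to the identity $f(a)=\bigvee_{c<f(a)}c$: since $f^{\ell}$ preserves joins and $f^{\ell}f(a)=a$, this yields $a=\bigvee_{c<f(a)}f^{\ell}(c)$. Because $c\leq f(a)$ gives $f^{\ell}(c)\leq f^{\ell}f(a)=a$, it then suffices to show that each $f^{\ell}(c)$ is strictly below $a$, for then $a$ is a join of elements strictly below it. When $c\in f[\Omega]$ this is immediate: writing $c=f(y)$ forces $y<a$ and $f^{\ell}(c)\leq y<a$. The delicate case is $c\notin f[\Omega]$, where Lemma~\ref{l: facts of fl and fr}(4) places $c$ in a cover-gap $(ff^{r}(c),ff^{\ell}(c))$ with $f^{r}(c)\prec f^{\ell}(c)$, and one must use the hypothesis together with the covering structure to keep $f^{\ell}(c)$ from reaching $a$. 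I expect this strictness step to be the main obstacle: it is exactly the place where a limit point appearing as the supremum of an interval of non-image points must be tied back to a genuine limit structure at $a$, and controlling it is what forces one to track how $f$ maps covering pairs across such gaps.

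Finally, to obtain (3) from (1): if $a\notin f[\Omega]$, then Lemma~\ref{l: facts of fl and fr}(4) gives $f^{r}(a)\prec f^{\ell}(a)$, so $f^{\ell}(a)$ has a lower cover and cannot lie in $\Omega^{-}$; thus $f^{\ell}(a)\in\Omega^{-}$ already forces $a\in f[\Omega]$. Once $a\in f[\Omega]$, Lemma~\ref{l: facts of fl and fr}(3) gives $f(f^{\ell}(a))=a$, so the element $b:=f^{\ell}(a)$ satisfies $b=f^{\ell}f(b)$ and meets the hypothesis of (1). Applying (1) to $b$ turns the biconditional "$f^{\ell}(a)\in\Omega^{-}$ iff $a=f(b)\in\Omega^{-}$" into precisely what (3) asserts under the standing assumption $a\in f[\Omega]$. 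Statement (4) follows from (2) in the same way, so the only genuinely new work is the backward half of (1) and (2).
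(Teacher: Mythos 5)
Your forward direction of (1) and your reductions of (3) and (4) to (1) and (2) coincide with the paper's proof: the paper also computes $f(a)=\bigvee\{f(b):b<a\}$ and converts the index set via Lemma~\ref{l: facts of fl and fr}(2), and it also obtains (3) by first using Lemma~\ref{l: facts of fl and fr}(4) to force $a\in f[\Omega]$ and then instantiating (1) at the point $f^{\ell}(a)$, using $ff^{\ell}f=f$ and $f^{\ell}ff^{\ell}=f^{\ell}$ from Lemma~\ref{l: facts of fl and fr}(1). But the backward half of (1) --- which you yourself isolate as ``the only genuinely new work'' --- is exactly what you do not prove: for $c<f(a)$ with $c\notin f[\Omega]$ you only announce that one must keep $f^{\ell}(c)$ from reaching $a$, with no argument. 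That is a genuine gap. At this point the paper passes from $a=\bigvee\{f^{\ell}(x):x<f(a)\}$ to $\bigvee\{f^{\ell}(x):f^{\ell}(x)<a\}\leq\bigvee\{b:b<a\}\leq a$, citing Lemma~\ref{l: facts of fl and fr}(2).

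You should know, however, that your suspicion about this case is well founded and that the strictness step cannot be closed from the stated hypotheses alone. Lemma~\ref{l: facts of fl and fr}(2) gives $x<f(a)\Leftrightarrow f^{r}(x)<a$ --- with $f^{r}$, not $f^{\ell}$ --- and for $x\notin f[\Omega]$ one only has $f^{r}(x)\prec f^{\ell}(x)$, which allows $f^{\ell}(x)=a$. Concretely, on the chain $\Omega=\{a_0<a_1<a_2<\cdots<t<u<v\}$ with $t=\bigvee_n a_n$, the order-preserving map $f$ with $f(x)=a_0$ for $x\leq t$, $f(u)=t$, $f(v)=v$ is residuated and dually residuated (here $f^{r}(y)=t$ for $y<t$, $f^{r}(t)=f^{r}(u)=u$, $f^{r}(v)=v$; and $f^{\ell}(a_0)=a_0$, $f^{\ell}(x)=u$ for $a_0<x\leq t$, $f^{\ell}(u)=f^{\ell}(v)=v$), and it satisfies $f^{\ell}f(u)=f^{\ell}(t)=u$ and $f(u)=t\in\Omega^{-}$, while $u\notin\Omega^{-}$ since $t\prec u$; the point $x=a_1<f(u)$ has $x\notin f[\Omega]$ and $f^{\ell}(x)=u$, which is precisely your delicate case materializing. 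The same example, taken at $a=t$, violates the backward half of (3). So the implications $(\Leftarrow)$ in (1) and (3) need more than ``residuated and dually residuated'': for instance the existence of $f^{rr}$ suffices, and holds in every later application (where $f\in F(\mathbf{\Omega})$) --- one checks that a lower cover $b^{*}\prec a$ would force every $x$ in the nonempty set $(f(b^{*}),f(a))$ to satisfy $f^{r}(x)=b^{*}$, so that $(f^{r})^{-1}[b^{*}]$ has no maximum, contradicting Lemma~\ref{l: facts of fl and fr}(3) applied to the map $f^{r}$. To complete your write-up you must either supply such an argument under a strengthened hypothesis or find some other way around the obstacle; as it stands, the proposal does not prove the backward halves of (1)--(4).
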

\begin{proof} Recall that since $f$ is residuated, it preserves existing joins. Also, since $f$ is dually residuated, $f^\ell$ exists and $f^\ell$ is residuated (with residual $f$), so $f^\ell$ preserves existing joins, as well. We will prove (1) and (3) as the proofs of (2) and (4) are dual.

(1) For the forward direction, using $a\in \Omega^{-}$, that $f$ preserves existing joins, $a=f^{\ell}f(a)$ and Lemma~\ref{l: facts of fl and fr}(2), we get that
$f(a)=f(\bigvee\{ b\in \Omega :b<a\})
=\bigvee\{ f(b) :b<a\} =\bigvee\{ f(b) :b< f^{\ell}f(a)\} 
=\bigvee\{ f(b) :f(b)<f(a)\}\leq$
$\bigvee\{ x\in \Omega :x<f(a)\}\leq f(a)$. So $f(a) \in \Omega ^{-}$.

Conversely, using  $f(a)\in\Omega^-$, $f^{\ell}$ preserves existing joins and Lemma~\ref{l: facts of fl and fr}(2), we get
$a=f^{\ell}f(a)=f^{\ell} [\bigvee\{ x\in \Omega :x<f(a)\}]
=\bigvee\{ f^{\ell}(x)\in \Omega :x<f(a)\} 
=\bigvee\{ f^{\ell}(x)\in \Omega :f^{\ell}(x)<a\} 
\leq \bigvee\{ b\in \Omega :b<a\}\leq a$, so $a=\bigvee\{ b\in \Omega :b<a\}$ and $a \in \Omega ^{-}$.

(3) If $f^{\ell}(a)\in\Omega^{-}$, the element $f^{\ell}(a)$ has no lower cover and in particular $f^r(a)\not \prec f^{\ell}(a)$. By Lemma~\ref{l: facts of fl and fr}(4), we get that $a \in f[\Omega]$, so $a=ff^{\ell}(a)$ by Lemma~\ref{l: facts of fl and fr}(1).
Also, by Lemma~\ref{l: facts of fl and fr}(1), $f^{\ell}f(f^{\ell}(a))=f^{\ell}(a)$, so by instantiating (1) for $f^{\ell}(a)$ we get $a=f(f^{\ell}(a))\in \Omega^-$. 

For the converse, since $a\in f[\Omega]$, Lemma~\ref{l: facts of fl and fr}(1) yields $ff^{\ell}(a)=a\in\Omega^-$. Also, by Lemma~\ref{l: facts of fl and fr}(1) we have $f^{\ell}(a)=f^{\ell}ff^{\ell}(a)$, so by instantiating (1) for $f^{\ell}(a)$ we get $f^{\ell}(a)\in \Omega^-$.
\end{proof}

The following lemma shows that certain types of limit points cannot occur at locations where the function is many-to-one (where multiple elements have the same image). This distinction of whether the function is many-to-one or one-to-one at a given element will end up being a recurring theme in this section.

\begin{lemma}\label{l:F(O) bounded images} 
If $\m \Omega$ is a chain, $a\in \Omega$, $f\in F( \mathbf{\Omega} )$ and $|f^{-1}[ a]| >1$, then $f^{\ell}(a)\notin\Omega^{-}$,  $f^r(a)\notin\Omega^{+}$ and $a\notin \Omega^{\pm}$.

\end{lemma}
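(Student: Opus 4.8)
The plan is to prove the three conclusions ($f^{\ell}(a)\notin\Omega^{-}$, $f^{r}(a)\notin\Omega^{+}$, and $a\notin\Omega^{\pm}$) by exploiting the hypothesis $|f^{-1}[a]|>1$. By Lemma~\ref{l: facts of fl and fr}(3), since $|f^{-1}[a]|>1$ forces $a\in f[\Omega]$, we have $f^{-1}[a]=[f^{\ell}(a),f^{r}(a)]$ and this interval contains at least two points, so $f^{\ell}(a)<f^{r}(a)$. The key observation I would try to extract is that whenever $f$ is constant (equal to $a$) on a nondegenerate interval, the endpoints and the value $a$ cannot be the ``wrong kind'' of limit point, because a limit approached through points on which $f$ is constant would collapse.

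First I would address $f^{\ell}(a)\notin\Omega^{-}$. Suppose toward a contradiction that $f^{\ell}(a)\in\Omega^{-}$. Since $a\in f[\Omega]$, Lemma~\ref{l:about limit points}(3) applies in the converse direction: $f^{\ell}(a)\in\Omega^{-}$ is equivalent to ($a\in f[\Omega]$ and $a\in\Omega^{-}$), so I immediately learn $a\in\Omega^{-}$. The idea is then to push this back through the constancy of $f$ on $[f^{\ell}(a),f^{r}(a)]$. Because $f^{\ell}(a)<f^{r}(a)$ and $f$ takes the constant value $a$ on the whole interval, I would argue that $f^{\ell}(a)$ being a limit point from below contradicts the structure: every $b<f^{\ell}(a)$ has $f(b)\leq a$, but then one should be able to reach $f^{\ell}(a)$ as a join from below while $f$ stays constant, conflicting with $f^{\ell}(a)=\min f^{-1}[a]$. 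Concretely, if $f^{\ell}(a)=\bigvee\{b:b<f^{\ell}(a)\}$, then applying $f$ (which preserves existing joins) gives $a=f(f^{\ell}(a))=\bigvee\{f(b):b<f^{\ell}(a)\}$; but each such $b$ lies strictly below the minimum of $f^{-1}[a]$, so $f(b)<a$ (as $f(b)=a$ would contradict minimality), and then $a$ itself would be a limit from below of values $f(b)<a$, i.e.\ $a\in\Omega^{-}$ is forced again, which is consistent but I need the actual contradiction — so the cleaner route is to combine Lemma~\ref{l:about limit points}(3) with Lemma~\ref{l:about limit points}(1) instantiated at $f^{\ell}(a)$, noting $f^{\ell}ff^{\ell}(a)=f^{\ell}(a)$ holds by Lemma~\ref{l: facts of fl and fr}(1).

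By the dual argument (using parts (2) and (4) of Lemma~\ref{l:about limit points} and the dual residual facts), I would obtain $f^{r}(a)\notin\Omega^{+}$. For the final claim $a\notin\Omega^{\pm}$, I would split into $a\notin\Omega^{-}$ and $a\notin\Omega^{+}$ separately. Using Lemma~\ref{l:about limit points}(3) in the contrapositive: if $a\in\Omega^{-}$ and $a\in f[\Omega]$, then $f^{\ell}(a)\in\Omega^{-}$, which I have just ruled out; hence $a\notin\Omega^{-}$. Dually, via part (4), if $a\in\Omega^{+}$ (together with $a\in f[\Omega]$) then $f^{r}(a)\in\Omega^{+}$, contradicting the second conclusion, so $a\notin\Omega^{+}$; together these give $a\notin\Omega^{\pm}$.

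The main obstacle I anticipate is getting the first conclusion $f^{\ell}(a)\notin\Omega^{-}$ to yield a genuine contradiction rather than a tautology, since Lemma~\ref{l:about limit points}(3) freely converts between $f^{\ell}(a)\in\Omega^{-}$ and $a\in\Omega^{-}$ without yet using $|f^{-1}[a]|>1$. The hypothesis $|f^{-1}[a]|>1$ must be the decisive ingredient, and I expect it enters precisely through $f^{\ell}(a)<f^{r}(a)$: the interval of constancy has a genuine lower endpoint $f^{\ell}(a)$ that is \emph{attained}, and a point attained as a minimum of a nondegenerate fiber cannot simultaneously be a limit from below without forcing a point strictly between it and its predecessor to also map to $a$, violating minimality. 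I would therefore make the argument hinge on showing that $f^{\ell}(a)\in\Omega^{-}$ combined with $f^{\ell}(a)<f^{r}(a)$ contradicts $f^{\ell}(a)=\min f^{-1}[a]$ directly, and then derive the remaining statements from it as outlined.
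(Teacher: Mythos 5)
Your reduction of the third conclusion to the first two, and your derivation of $a\in f[\Omega]$, $f^{\ell}(a)<f^{r}(a)$, and (via Lemma~\ref{l:about limit points}(3)) $a\in\Omega^{-}$, all match the paper's proof. But the heart of the lemma --- a genuine contradiction from $f^{\ell}(a)\in\Omega^{-}$ --- is never reached. You concede your first attempt is circular, and your ``cleaner route'' is circular too: instantiating Lemma~\ref{l:about limit points}(1) at $f^{\ell}(a)$ (using $f^{\ell}ff^{\ell}(a)=f^{\ell}(a)$) only yields $f^{\ell}(a)\in\Omega^{-}$ iff $ff^{\ell}(a)=a\in\Omega^{-}$, which is exactly the equivalence of Lemma~\ref{l:about limit points}(3) restated; it never touches $|f^{-1}[a]|>1$. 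Your final sketch --- that the minimum of a nondegenerate fiber cannot be a limit from below ``without forcing a point strictly between it and its predecessor to also map to $a$'' --- is not a valid argument: a limit point from below has no predecessor by definition, and points strictly below $f^{\ell}(a)=\min f^{-1}[a]$ simply map strictly below $a$, with no conflict. In fact the configuration you want to refute is consistent for maps that are merely residuated and dually residuated: on $\mathbb{R}$, let $f(x)=x$ for $x\leq 0$, $f(x)=0$ for $0\leq x\leq 1$, and $f(x)=x-1$ for $x\geq 1$. Then $f^{\ell}$ and $f^{r}$ exist, $f^{-1}[0]=[0,1]$ has more than one point, and $f^{\ell}(0)=0\in\Omega^{-}$. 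So no argument using only first-order residuation facts can close your gap; one must use the full hypothesis $f\in F(\Omega)$, i.e., the existence of higher-order residuals.

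That is precisely what the paper's proof does. Since $f^{rr}$ exists, $f^{r}$ is residuated and hence preserves existing joins (my example fails exactly this: $\bigvee\{f^{r}(x):x<0\}=0\neq 1=f^{r}(0)$). Arguing by contraposition, from $f^{\ell}(a)\in\Omega^{-}$ one gets $a\in\Omega^{-}$ and $a=ff^{\ell}(a)$, and then computes $f^{r}(a)=f^{r}(\bigvee\{x:x<a\})=\bigvee\{f^{r}(x):x<ff^{\ell}(a)\}=\bigvee\{f^{r}(x):f^{r}(x)<f^{\ell}(a)\}\leq f^{\ell}(a)$, the middle step by Lemma~\ref{l: facts of fl and fr}(2); combined with $f^{-1}[a]=[f^{\ell}(a),f^{r}(a)]$ this forces $|f^{-1}[a]|\leq 1$, contradicting the hypothesis. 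The dual computation (with $f^{\ell}$ preserving existing meets) handles $f^{r}(a)\notin\Omega^{+}$, and then $a\notin\Omega^{\pm}$ follows from Lemma~\ref{l:about limit points}(3,4) as you said. So your proposal correctly maps out the periphery, including where the hypothesis must bite, but is missing the one decisive mechanism --- join-preservation of $f^{r}$, available only because second-order residuals exist --- without which the first two conclusions are actually false.
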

\begin{proof}
We first prove the contraposition: $f^{\ell}(a)\in\Omega^{-}$ implies $|f^{-1}[ a]|\leq 1$. By Lemma~\ref{l:about limit points}(3), $f^{\ell}(a)\in\Omega^{-}$ yields $a \in f[\Omega]$ and $a\in\Omega^{-}$; hence $ff^{\ell}(a)=a\in\Omega^{-}$, by Lemma \ref{l: facts of fl and fr}(1). By Lemma~\ref{l: facts of fl and fr}(3), we have $|f^{-1}[ a ]|=[f^{\ell} (a),f^{r}(a)]$ and $f^{\ell} (a) \leq f^{r}(a)$, so to get $f^{-1}[ a]\leq 1$ it suffices to prove that $f^{r}(a)\leq f^\ell(a)$. 
Using  $a\in\Omega^-$, the fact that $f^r$ preserves existing joins, $a=ff^{\ell}(a)$ and Lemma \ref{l: facts of fl and fr}(2), we get 
 $f^r(a)=f^r(\bigvee\{x:x<a\})=\bigvee\{f^r(x):x<ff^{\ell}(a) \}
 = \bigvee\{f^r(x):f^r(x)<f^{\ell}(a)\}\leq f^{\ell}(a)$. 

 Likewise, we show that:  $f^{r}(a)\in\Omega^{+}$ implies $|f^{-1}[ a]|\leq 1$. Also, if we have $|f^{-1}[ a]| >1$, we showed above that  $f^{\ell}(a)\notin\Omega^{-}$ and  $f^r(a)\notin\Omega^{+}$. By Lemma~\ref{l:about limit points}(3,4), we get $a\notin \Omega^{\pm}$.
\end{proof}

\subsection{The embedding theorem}
  It is shown in \cite{GJKO} that, given a chain $\m \Omega$, 
 $F(\mathbf{\Omega})$ gives rise to a distributive $\ell$-pregroup $\mathbf{F(\Omega)}$, under functional composition and pointwise order. 
Also,  every distributive $\ell$-pregroup embeds into $\mathbf{F(\Omega)}$, for some  chain $\m \Omega$ by \cite{GH}.
We will improve this representation by showing that for every chain $\m \Omega$, there exists an integral chain $\overline{\m \Omega}$, such that  $\mathbf{F(\Omega)}$  embeds in $\mathbf{F(\overline{\Omega})}$. We first give an example on how the chain $\overline{\m \Omega}$ is constructed and how the embedding works. 

\begin{example}\label{e: extension}
 Suppose that for a chain $\mathbf{\Omega}$ we have $\{a,\gamma,f(a)\}\subseteq \Omega^-$ and $\gamma\in\Omega^+$, as shown in Figure~\ref{f:omega bar}. 
The chain $\overline{\Omega}$ will contain
\[
\Omega\cup \{ (-n,a):n\in\mathbb{Z}^{+}\}\cup \{ (-n,f(a)):n\in\mathbb{Z}^{+}\} \cup \{ (n,\gamma):n\in\mathbb{Z}^{*}\}
\]
We define $(k,a)\leq (n,b)$ if and only if  $a<b$ or ($a=b$ and $k\leq n$), where we write $(0,a)$ for $a \in \Omega$. 
 Also, given a function $f:\Omega\rightarrow\Omega$, as  in Figure~\ref{f:omega bar}, $\overline{f}$  
 extends $f$ with $\overline{f}((n,a))=(n,f(a))$ and $\overline{f}((n,a))=f(\gamma)$ in an order-preserving way. 
 \end{example}
 
 \begin{figure}[ht]\def\eq{=} 
 \begin{center}
{\scriptsize
\begin{tikzpicture}
[scale=0.5]
\node[fill,draw,circle,scale=0.3,left](2) at (0,2){};
\node[left](2.1) at (-0.2,2){$c$};
\node[fill,draw,circle,scale=0.3,right](2r) at (2,2.5){};
\node[right](2.1) at (2.1,2.5){$f(c)$};
\node[fill,draw,circle,scale=0.3,left](1) at (0,1){};
\node[left](1.1) at (-0.2,1){$b$};
\node[fill,draw,circle,scale=0.3,right](1r) at (2,1.5){};
\node[right](2.1) at (2.1,1.5){$f(b)$};
\node[fill,draw,red,circle,scale=0.3,left](0) at (0,0){};
\node[left](0.1) at (-0.2,0){$a$};
\node[fill,draw,red,circle,scale=0.3,right](0r) at (2,0.5){};
\node[right](0.1) at (2.1,0.5){$f(a)$};

\node[fill,draw,circle,scale=0.3,left](-5) at (0,-4){};
\node[left](-4.1) at (-0.2,-4){$\alpha$};

\node[fill,draw,circle,scale=0.3,left](-6) at (0,-8){};
\node[left](-8.1) at (-0.2,-8){$\beta$};
\node[fill,draw,circle,scale=0.3,right](-6r) at (2,-7.5){};
\node[right](-8.1) at (2.1,-7.5){$f(\gamma)$};
\node[fill,draw,red,circle,scale=0.3,left](g) at (0,-6){};
\node[left](-6.1) at (-0.2,-6){$\gamma$};
\node at (0)[below=-1pt]{$\vdots$};
\node at (0r)[below=-1pt]{$\vdots$};
\node at (-6)[below=-1pt]{$\vdots$};
\node at (-6r)[below=-1pt]{$\vdots$};
\node at (-5)[above=3pt]{$\vdots$};
\node at (2)[above=3pt]{$\vdots$};
\node at (2r)[above=3pt]{$\vdots$};
\node at (g)[above=1pt]{$\vdots$};
\node at (g)[below=-4pt]{$\vdots$};
\draw[-](2)--(2r);
\draw[-](1)--(1r);
\draw[-](0)--(0r);
\draw[-](-5)--(-6r);
\draw[-](-6)--(-6r);
\draw[-](g)--(-6r);
\node at (1,-9){\normalsize$f$};
\end{tikzpicture}
\qquad
\begin{tikzpicture}
[scale=0.5]
\node[fill,draw,circle,scale=0.3,left](2) at (0,2){};
\node[left](2.1) at (-0.2,2){$c$};
\node[fill,draw,circle,scale=0.3,right](2r) at (2,2.5){};
\node[right](2.1) at (2.1,2.5){$f(c)$};
\node[fill,draw,circle,scale=0.3,left](1) at (0,1){};
\node[left](1.1) at (-0.2,1){$b$};
\node[fill,draw,circle,scale=0.3,right](1r) at (2,1.5){};
\node[right](2.1) at (2.1,1.5){$f(b)$};
\node[fill,draw,circle,scale=0.3,left](0) at (0,0){};
\node[left](0.1) at (-0.2,0){$a$};
\node[fill,draw,circle,scale=0.3,right](0r) at (2,0.5){};
\node[right](0.1) at (2.1,0.5){$f(a)$};
\node[left](-1) at (-0.2,-1){$(-1,a)$};
\node[fill,draw,circle,scale=0.3,right](-1r) at (2,-0.5){};
\node[right](-1.1) at (2.1,-0.5){$(-1,f(a))$};
\node[fill,draw,circle,scale=0.3,left](-1) at (0,-1){};
\node[fill,draw,circle,scale=0.3,left](-2) at (0,-2){};
\node[left](-2.2) at (-0.2,-2){$(-2,a)$};
\node[fill,draw,circle,scale=0.3,right](-2r) at (2,-1.5){};
\node[right](-2.1) at (2.1,-1.5){$(-2,f(a))$};

\node[fill,draw,circle,scale=0.3,left](-5) at (0,-4){};
\node[left](-4.1) at (-0.2,-4){$\alpha$};
\node[fill,draw,circle,scale=0.3,left](g+) at (0,-5.5){};
\node[left](-5.1) at (-0.2,-5.3){$(1,\gamma)$};

\node[fill,draw,circle,scale=0.3,left](g) at (0,-6){};
\node[left](-6.1) at (-0.5,-6){$\gamma$};

\node[fill,draw,circle,scale=0.3,left](g-) at (0,-6.5){};
\node[left](-7.1) at (-0.2,-6.5){$(-1,\gamma)$};

\node[fill,draw,circle,scale=0.3,left](-6) at (0,-8){};
\node[left](-8.1) at (-0.2,-8){$\beta$};
\node[fill,draw,circle,scale=0.3,right](-6r) at (2,-7.5){};
\node[right](-8.1) at (2.1,-7.5){$f(\gamma)$};
\node at (-2)[below=-1pt]{$\vdots$};
\node at (-2r)[below=-1pt]{$\vdots$};
\node at (-6)[below=-1pt]{$\vdots$};
\node at (-6r)[below=-1pt]{$\vdots$};
\node at (2)[above=3pt]{$\vdots$};
\node at (2r)[above=3pt]{$\vdots$};
\node at (g+)[above=1pt]{$\vdots$};
\node at (g-)[below=-4pt]{$\vdots$};
\draw[-](2)--(2r);
\draw[-](1)--(1r);
\draw[-](0)--(0r);
\draw[-](-2)--(-2r);
\draw[-](-1)--(-1r);
\draw[-](-5)--(-6r);
\draw[-](-6)--(-6r);
\draw[-](g)--(-6r);
\draw[-](g+)--(-6r);
\draw[-](g-)--(-6r);
\node at (1,-9){\normalsize$\overline{f}$};
\end{tikzpicture}
}
\caption{Extension to an integral chain.
}
\label{f:omega bar}
\end{center}
\end{figure}
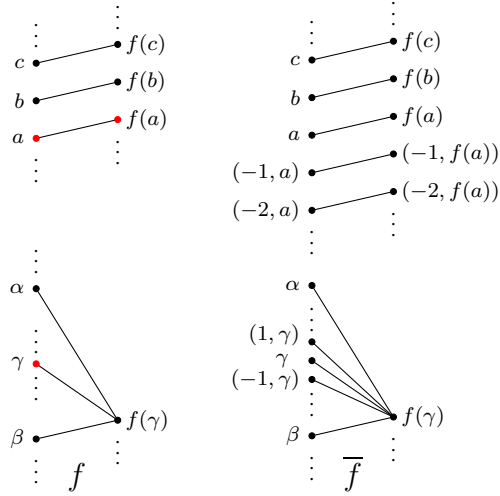
 In general, given a chain $\mathbf{\Omega}$, we define the extended chain $\mathbf{\overline{\Omega}}$ as follows:
\[
\overline{\Omega}=\Omega\cup\{ (k,a):a\in\Omega^{+}\text{ and }k\in\mathbb{Z}^{+}\} \cup\{ (-k,a):a\in\Omega^{-}\text{ and }k\in\mathbb{Z}^{+}\} \]
To simplify notation we set $ka:=(k,a)$, when $(k,a)\in\overline{\Omega}$ and $k \not = 0$; 
also, to keep the notation uniform we set $0a:=a$ for $a\in\Omega$. So, the all elements of $\overline{\Omega}$ are of the form $ka$.
The order in $\mathbf{\overline{\Omega}}$ is defined by:  
\begin{center}
   $ka\leq nb$ iff  $a < b$ or ($a=b$ and $k\leq n$), for $ka,nb\in \overline{\Omega}$. 
\end{center}

\begin{lemma}
 If $\m \Omega$ is a chain, then $\mathbf{\overline{\Omega}}$ is an integral chain that extends $\m \Omega$.
\end{lemma}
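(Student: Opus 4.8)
The plan is to realize $\overline{\m\Omega}$ as an order-theoretic pullback of a lexicographic product, which makes the chain and extension claims immediate, and then to reduce integrality to the existence of immediate successors and predecessors. First I would note that the relation on $\overline{\m\Omega}$ is, by its very definition, the pullback along the injection $ka\mapsto(a,k)$ of the lexicographic order on $\Omega\times_{\mathrm{lex}}\mathbb{Z}$ (compare $\Omega$-coordinates first, then $\mathbb{Z}$-coordinates). Since the lexicographic product of two chains is a chain, and the pullback of a total order along an injection is again a total order, $\overline{\m\Omega}$ is a chain; alternatively one checks reflexivity, antisymmetry, transitivity and comparability directly by casing on whether the two $\Omega$-coordinates coincide. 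Restricting the same injection to $\{0a : a\in\Omega\}$ shows that $a\mapsto 0a$ is an order-embedding, so $\overline{\m\Omega}$ extends $\m\Omega$.

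For integrality it suffices to prove that every $x\in\overline\Omega$ has an immediate successor and an immediate predecessor: iterating the successor map produces for each $x$ an ascending sequence $x\prec x{+}1\prec x{+}2\prec\cdots$ of covers and, dually, a descending one, so $x$ has a $k$-cover for every $k\in\mathbb{Z}$. I would compute successors fiber by fiber. For $ka,nb$ with $a=b$, nothing lies strictly between $(a,k)$ and $(a,k{+}1)$, so whenever $(a,k{+}1)\in\overline\Omega$ it is the immediate successor of $(a,k)$. Running through the membership conditions that define $\overline\Omega$ (a positive index requires $a\in\Omega^+$, a negative one requires $a\in\Omega^-$), this within-fiber step supplies a successor in every case except that of an original point $a=0a$ with $a\notin\Omega^+$, and dually the lower-cover question fails only for $a\notin\Omega^-$.

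The one case that needs genuine work, and which I expect to be the main obstacle, is the successor of an original point $a\in\Omega$ with $a\notin\Omega^+$. Here I would establish the dichotomy that in a chain $a\notin\Omega^+$ forces $a$ to have an upper cover $a'$ in $\m\Omega$ (invoking the convention under which a greatest element is a limit point from above, so that the hypothesis $a\notin\Omega^+$ already excludes $a$ being the top): the set $\{b:a<b\}$ is then nonempty, its greatest lower bound exists in $\m\Omega$ and, being different from $a$ precisely because $a\notin\Omega^+$, is the least element $a'$ of $\{b:a<b\}$, i.e.\ a cover of $a$. The decisive point is that this $\Omega$-cover survives in $\overline\Omega$: from $a\prec a'$ in $\m\Omega$ we get $a'=\max\{b:b<a'\}$, hence $a'\notin\Omega^-$, so no new points $(-k,a')$ were inserted just below $a'$; together with $a\notin\Omega^+$ (no new points just above $a$) and the absence of $\Omega$-elements strictly between $a$ and $a'$, this gives $a\prec a'$ in $\overline{\m\Omega}$ as well.

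The predecessor statement follows by the evident duality of the construction under $(k,a)\mapsto(-k,a)$, order-reversal of $\m\Omega$, and the interchange $\Omega^+\leftrightarrow\Omega^-$. Having immediate successors and predecessors everywhere, iteration yields a $k$-cover of every element for all $k\in\mathbb{Z}$, so $\overline{\m\Omega}$ is integral. I expect the bookkeeping of the membership conditions in the fiberwise step to be entirely routine, with the only delicate ingredient being the cover-versus-limit-point dichotomy together with its endpoint convention, which is where I would concentrate the care.
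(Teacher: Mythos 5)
Your proposal is correct and takes essentially the same route as the paper's (very terse) proof: both reduce integrality to showing every element has an upper and a lower cover, split into the within-fiber case, where $ka \prec (k{+}1)a$ whenever $(k{+}1)a \in \overline{\Omega}$, and the case of an original point $a \notin \Omega^{+}$ (dually $a \notin \Omega^{-}$), whose cover in $\m\Omega$ survives in $\overline{\m\Omega}$. You merely make explicit what the paper leaves implicit---totality of the order, the endpoint convention built into the definition of $\Omega^{\pm}$, and the check that $a' \notin \Omega^{-}$ so the $\Omega$-cover persists (where you have a harmless slip: it should read $a=\max\{b:b<a'\}$, not $a'$).
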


\begin{proof}
 Every point $a\in \Omega - \Omega^+$ has a cover in $\m \Omega$ and the same element is a cover in $\mathbf{\overline{\Omega}}$; also, every point $ka\in  \Omega^+ \cup (\mathbb{Z}^+ \times \Omega^+)\cup (\mathbb{Z}^- \times \Omega^-)$ is covered by $(k+1)a $ in $\mathbf{\overline{\Omega}}$. Likewise, every element in $\mathbf{\overline{\Omega}}$ has a lower cover.     
\end{proof}

Note that in Example~\ref{e: extension}, the value of $\overline{f}$ on a new element, such as $(-1) a$ or $(-1)\gamma$, depends on whether the function is injective at $a$ or at $\gamma$; here, we say that $f$ is injective at $b$, if $|f^{-1}[f(b)]|=1$. This leads to the following definition.

\medskip

     If $\m \Omega$ is a chain, $f\in F(\mathbf{\Omega})$ and $kb\in \overline{\Omega}$, we define 
     \begin{itemize}
         \item $\overline{f}(kb)=f(b)$, if $|f^{-1}[f (b)]|>1$ or $k=0$.
         \item $\overline{f}(kb)=kf(b)$, if $|f^{-1}[f(b)]|=1$ and  $k \not = 0$.
     \end{itemize}

\begin{lemma}\label{l:barfell}
     If $\m \Omega$ is a chain and $f\in F(\mathbf{\Omega})$, then
     $\overline{f}\in F(\mathbf{\overline{\Omega}})$.
\end{lemma}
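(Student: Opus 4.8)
The plan is to exploit that $\mathbf{\overline{\Omega}}$ is integral, so by Corollary~\ref{c: intF(Omega)} it suffices to check that $\overline{f}$ is a well-defined order-preserving map satisfying conditions (1) and (2) of Lemma~\ref{l: bounded preimage}. The first thing to pin down is well-definedness: whenever the definition assigns $\overline{f}(kb)=kf(b)$ with $k\neq 0$, we must know that $kf(b)\in\overline{\Omega}$, i.e.\ that $f(b)\in\Omega^+$ when $k>0$ and $f(b)\in\Omega^-$ when $k<0$. This branch occurs exactly when $f$ is injective at $b$, i.e.\ $|f^{-1}[f(b)]|=1$; since $f(b)\in f[\Omega]$, Lemma~\ref{l: facts of fl and fr}(3) then forces $f^{\ell}f(b)=f^{r}f(b)=b$, so by Lemma~\ref{l:about limit points}(1,2) the point $b$ is a limit point from below (resp.\ above) iff $f(b)$ is. As $kb\in\overline{\Omega}$ requires $b\in\Omega^-$ (for $k<0$) or $b\in\Omega^+$ (for $k>0$), this transfers the limit type to $f(b)$ and guarantees $kf(b)\in\overline{\Omega}$.

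Next I would verify order preservation of $\overline{f}$ by the case split coming from the definition of $\leq$ on $\overline{\Omega}$. If $kb\leq nc$ with distinct base points $b<c$, then either $f(b)<f(c)$, in which case the base point of $\overline{f}(kb)$ is strictly below that of $\overline{f}(nc)$, or $f(b)=f(c)$, in which case $b,c\in f^{-1}[f(b)]$ gives $|f^{-1}[f(b)]|>1$ and both values collapse to $f(b)$. If the base points agree ($b=c$, $k\leq n$), then either $f$ is not injective at $b$ and both values equal $f(b)$, or it is injective at $b$ and both values are copies $kf(b)\leq nf(b)$ over the same base point (both legitimate by the previous paragraph).

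For condition (1) I would compute $\overline{f}^{-1}[\overline{a}]$ for $\overline{a}=ma$, organizing by whether $m=0$ and by $|f^{-1}[a]|$. When $m=0$ and $a\in f[\Omega]$, the preimage is $[f^{\ell}(a),f^{r}(a)]$: the inclusion of all interior copies is automatic, and at the endpoints, if $|f^{-1}[a]|>1$ then Lemma~\ref{l:F(O) bounded images} gives $f^{\ell}(a)\notin\Omega^-$ and $f^{r}(a)\notin\Omega^+$, so no copies protrude beyond the endpoints and the set stays a genuine interval of $\overline{\Omega}$; if $|f^{-1}[a]|=1$ the preimage is the singleton $\{f^{\ell}(a)\}$. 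When $m\neq 0$, a copy $ma$ is hit only via the injective branch, forcing $a\in f[\Omega]$ with $|f^{-1}[a]|=1$ and unique preimage $b_0$, giving the singleton preimage $\{mb_0\}$. In every case $\overline{f}^{-1}[\overline{a}]$ is a closed interval, as required.

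The main obstacle is condition (2), specifically for the newly added copy points. If $\overline{a}=ma\notin\overline{f}[\overline{\Omega}]$ with $m\neq 0$, then $a\in\Omega^{\pm}$, so by Lemma~\ref{l:F(O) bounded images} the alternative $|f^{-1}[a]|>1$ is impossible and we must have $a\notin f[\Omega]$; the same conclusion holds trivially for $m=0$. In either situation Lemma~\ref{l: facts of fl and fr}(4) supplies a cover $f^{r}(a)\prec f^{\ell}(a)$ in $\mathbf{\Omega}$ with $a\in(ff^{r}(a),ff^{\ell}(a))$. The key observation is that a cover relation forbids limit points at its ends, so $f^{r}(a)\notin\Omega^+$ and $f^{\ell}(a)\notin\Omega^-$; hence no copies are inserted between them and $f^{r}(a)\prec f^{\ell}(a)$ persists in $\overline{\Omega}$. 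Since $\overline{f}(f^{r}(a))=ff^{r}(a)$ and $\overline{f}(f^{\ell}(a))=ff^{\ell}(a)$ have base points strictly below and above $a$, every copy $ma$ (and $a$ itself) lies in $(ff^{r}(a),ff^{\ell}(a))$, so this single cover pair witnesses (2) for all relevant $\overline{a}$. Having checked order preservation and both conditions, Corollary~\ref{c: intF(Omega)} yields $\overline{f}\in F(\mathbf{\overline{\Omega}})$.
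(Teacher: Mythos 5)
Your proposal follows essentially the same route as the paper: well-definedness of $\overline{f}$ via the limit-type transfer of Lemma~\ref{l:about limit points}(1,2) applied through Lemma~\ref{l: facts of fl and fr}(3), order preservation by cases on base points, and then conditions (1) and (2) of Lemma~\ref{l: bounded preimage}, invoking Corollary~\ref{c: intF(Omega)}. Your treatment of condition (1) (endpoints controlled by Lemma~\ref{l:F(O) bounded images}, interior copies by the squeeze from order preservation) and your observation that the cover $f^{r}(a)\prec f^{\ell}(a)$ persists in $\mathbf{\overline{\Omega}}$ because covers forbid the relevant limit types both match the paper's argument.

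However, there is one genuine gap, in condition (2) for the copy points. From $\overline{a}=ma\notin\overline{f}[\overline{\Omega}]$ with $m\neq 0$ you get $a\in\Omega^{\pm}$, and then you claim that Lemma~\ref{l:F(O) bounded images} rules out $|f^{-1}[a]|>1$ ``and we must have $a\notin f[\Omega]$.'' That conclusion does not follow: the lemma only yields $|f^{-1}[a]|\leq 1$, and a limit point can perfectly well have a singleton $f$-preimage (consider the identity map). Excluding the case $|f^{-1}[a]|=1$ is exactly where the paper spends several lines: if $a=f(c)$ with unique preimage $c$, then $f^{\ell}f(c)=c=f^{r}f(c)$ by Lemma~\ref{l: facts of fl and fr}(3), so by Lemma~\ref{l:about limit points}(1,2) the point $c$ has the same limit type as $a$; hence the copy $mc$ exists in $\overline{\Omega}$ and the injective branch of the definition gives $\overline{f}(mc)=mf(c)=ma=\overline{a}$, contradicting $\overline{a}\notin\overline{f}[\overline{\Omega}]$. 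Only after this contradiction is $a\notin f[\Omega]$ available, so that Lemma~\ref{l: facts of fl and fr}(4) supplies the covering pair. You already proved the needed limit-type transfer in your well-definedness paragraph, so the repair is short, but as written the inference is a non sequitur, and it occurs at precisely the new points your construction was introduced to handle.
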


\begin{proof}
 First observe that in the second case of the definition, $kf(b)$ indeed exists in $\overline{\Omega}$: Since $k \not = 0$, we have $b \in \Omega^\pm$, and since $|f^{-1}[f(b)]|=1$, we have $f^{\ell}f(b)=f^{r}f(b)=b$, by Lemma~\ref{l: facts of fl and fr}(3). So, by Lemma~\ref{l:about limit points}(1,2), we know that $f(b)\in\Omega^-$, if $b\in\Omega^-$, and $f(b)\in\Omega^+$, if $b\in\Omega^+$; so $kf(b)$ exists in $\overline{\Omega}$. Therefore, $\bar{f}$ is a function on $\mathbf{\overline{\Omega}}$.
Also, note that $\overline{f}$ is  order-preserving. Indeed, assume that   $ka,nb\in \overline{\Omega}$ and $ka < nb$, i.e.,  $a < b$  or ($a=b$ and $k < n$). If $a=b$ and $k < n$, then $f(a)=f(b)$, so:  
$\overline{f}(ka)= f(a) = f(b)=\overline{f}(nb)$ in the  case $|f^{-1}[f (a)]|>1$; 
$\overline{f}(ka)= f(a)=f(b) \leq \overline{f}(nb)$ in  case $0=k<n$; 
$\overline{f}(ka) \leq f(a)=f(b) = \overline{f}(nb)$ in  case $k<n=0$; 
and $\overline{f}(ka) = kf(a)=kf(b)< nf(b)=\overline{f}(nb)$ in  case $|f^{-1}[f (a)]|=1$ and $k \not = 0 \not = n$. 
If $a < b$, then $f(a) \leq f(b)$.  In the case $f(a) < f(b)$, we have $kf(a) < nf(b)$ and so $\overline{f}(ka)<\overline{f}(nb)$. In the case $f(a) = f(b)$, we have $|f^{-1}[f (a)]|>1$, so $\overline{f}(ka)= f(a) = f(b)=\overline{f}(nb)$.  

In view of Corollary~\ref{c: intF(Omega)}, to establish $\overline{f}\in F(\mathbf{\overline{\Omega}})$ it suffices to show that $\overline{f}$ satisfies (1) and (2) of Lemma~\ref{l: bounded preimage}.

(1) Suppose $a\in\overline{f}[\overline{\m \Omega}]$; we will show that $(\overline{f})^{-1}[a]=[c,d]_{\mathbf{\overline{\Omega}}}$, for $c,d \in \overline{\Omega}$. 
 If $a\notin\Omega$, then $a=kf(b)$ for some $k \not = 0$ and $b\in\Omega^{\pm}$, where $|f^{-1}[ f(b)]|=1$, i.e., $b$ is unique; it  follows by the definition of $\overline{f}$ that $(\overline{f})^{-1}[a]=(\overline{f})^{-1}[kf(b)]=\{kb\}$. 

 If $a\in\Omega$, then $a\in f[\Omega]$; we will show $(\overline{f})^{-1}[a]=  [f^{\ell} (a),f^{r}(a)]_{\mathbf{\overline{\Omega}}}$.
 If $kb \in  (\overline{f})^{-1}[a]$, then by the definition of  $\overline{f}$ we see that  $b \in \Omega$,  $f(b)=a$ and $|f^{-1}[f (b)]|>1$ or $k=0$. Also, since $a\in f[\Omega]$,  Lemma~\ref{l: facts of fl and fr}(3) yields $f^{-1}[ a ]=[f^{\ell} (a),f^{r}(a)]_{\mathbf{\Omega}}$ and $f^{\ell} (a) \leq f^{r}(a)$; hence $b \in f^{-1}[ a ]= [f^{\ell} (a),f^{r}(a)]_{\mathbf{\Omega}}$. 
 If $k=0$, then $kb=b \in [f^{\ell} (a),f^{r}(a)]_{\mathbf{\Omega}} \subseteq [f^{\ell} (a),f^{r}(a)]_{\mathbf{\overline{\Omega}}}$. If $k \not = 0$ (i.e., $b \in \Omega^\pm$), then $|f^{-1}[f (b)]|>1$, and by Lemma~\ref{l:F(O) bounded images}, $f^{\ell}(a)\notin\Omega^-$ and $f^r(a)\notin\Omega^+$. So, if $b \in \Omega^-$, then $f^{\ell}(a)< b \leq f^r(a)$ and $k<0$, hence $f^{\ell}(a) < kb \leq f^r(a)$; if $b \in \Omega^+$, then $f^{\ell}(a)\leq b < f^r(a)$ and $k>0$, hence $f^{\ell}(a) \leq kb < f^r(a)$. Therefore,  $(\overline{f})^{-1}[a] \subseteq [f^{\ell} (a),f^{r}(a)]_{\mathbf{\overline{\Omega}}}$. Conversely, if $kb \in [f^{\ell} (a),f^{r}(a)]_{\mathbf{\overline{\Omega}}}$, then $f^{\ell}(a)\leq kb \leq f^r(a)$. Since $a\in f[\Omega]$ and $\overline{f}$ is order-preserving, we have $a=f(f^\ell(a))=\overline{f}(f^\ell(a)) \leq \overline{f}(kb) \leq \overline{f}(f^r(a))=f(f^r(a))=a$; hence $\overline{f}(kb)=a$ and $kb \in \overline{f}^{-1}[a]$. So, $(\overline{f})^{-1}[a]=  [f^{\ell} (a),f^{r}(a)]_{\mathbf{\overline{\Omega}}}$.

(2) If $a\notin \overline{f}[\overline{\Omega}]$, we will show  $f^{r}(a)\prec_{\mathbf{\overline{\Omega}}} f^{\ell}(a)$ and $a\in(\overline{f} (f^{r}(a)), \overline{f}(f^{\ell}(a)))_{\mathbf{\overline{\Omega}}}$.
Since $f^r(a) \in \Omega$ and $f^{\ell}(a) \in \Omega$, we have  $\overline{f}f^r(a)=  ff^r(a)$   and $\overline{f} f^{\ell}(a)=ff^{\ell}(a)$.

If $a\in\Omega$, then since $a\notin\overline{f}[\overline{\Omega}]$, it follows that $a\notin f[\Omega]$.
 So, by Lemma~\ref{l: facts of fl and fr}(4), $f^r(a)\prec_{\mathbf{\Omega}}f^{\ell}(a)$ and $a\in (ff^r(a), ff^{\ell}(a))_{\mathbf{\Omega}}$; thus
 $a\in (ff^r(a), ff^{\ell}(a))_{\mathbf{\overline{\Omega}}}$. 
 Since $f^r(a)\prec_{\mathbf{\Omega}}f^{\ell}(a)$, we have $f^r(a) \not \in \Omega^+$ and $f^{\ell}(a) \not \in \Omega^-$; hence $f^r(a)\prec_{\m{\overline{\Omega}}}f^{\ell}(a)$.
 
If $a\notin \Omega$, then there exist $k \not = 0$ and $b\in\Omega^{\pm}$ such that $a=kb$; hence $|f^{-1}[ b]| \leq 1$, by Lemma~\ref{l:F(O) bounded images}. If we had $b \in f[\Omega]$, then we would get $|f^{-1}[ b]| = 1$, say $b=f(c)$,
and $f^\ell f(c)=c=f^r f(c)$ by Lemma~\ref{l: facts of fl and fr}(3). So by Lemma~\ref{l:about limit points}(1,2), we would get ($c\in\Omega^-$, if $b\in\Omega^-$) and ($c\in\Omega^+$, if $b\in\Omega^+$). Then, by the definition of $\overline{f}$, we would get $\overline{f}(kc)=kf(c)=kb=a$, contradicting  $a\notin \overline{f}[\overline{\Omega}]$; thus,
  $b\notin f[\Omega]$. 
  As above, we get 
$f^r(b)\prec_{\m{\overline{\Omega}}}f^{\ell}(b)$
 and $b\in (ff^r(b), ff^{\ell}(b))_{\mathbf{\overline{\Omega}}}$. 
 Since $ff^r(b)<b< ff^{\ell}(b)$, we get $ff^r(b)<kb< ff^{\ell}(b)$, hence $a=kb\in (ff^r(b), ff^{\ell}(b))_{\mathbf{\overline{\Omega}}}$. 
\end{proof}

\begin{theorem} \label{t:F(omega) F(overline(omega))}
    For every chain $\m{\Omega}$, the assignment $\overline{\text{}\cdot\text{}}:\mathbf{F(\Omega)}\rightarrow\mathbf{F(\overline{\Omega})}$ is an $\ell$-pregroup embedding.
\end{theorem}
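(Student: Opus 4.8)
The plan is to verify that $\overline{\text{}\cdot\text{}}$ is a monomorphism of $\ell$-pregroups by checking, in turn, that it is well-defined and injective and that it preserves the unit, composition, the lattice operations, and the two residuals. Lemma~\ref{l:barfell} already guarantees $\overline{f}\in F(\mathbf{\overline{\Omega}})$, so the map is well-defined; and since $\overline{f}$ agrees with $f$ on $\Omega\subseteq\overline{\Omega}$ by construction, the assignment is injective and $\overline{f}=\overline{g}$ forces $f=g$. Preservation of the unit is immediate: the identity $\mathrm{id}_\Omega$ is injective at every point, so $\overline{\mathrm{id}_\Omega}(kb)=k\,\mathrm{id}_\Omega(b)=kb$ for all $kb\in\overline{\Omega}$, i.e., $\overline{\mathrm{id}_\Omega}=\mathrm{id}_{\overline{\Omega}}$. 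All the content therefore lies in the three structural clauses, and the recurring tool is the behaviour of residuated maps at limit points recorded in Lemmas~\ref{l:about limit points} and~\ref{l:F(O) bounded images}.

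For composition I would show $\overline{fg}=\overline{f}\,\overline{g}$ pointwise. On $\Omega$ both sides equal $fg$, and for a new point $kb$ with $k\neq 0$ (so $b\in\Omega^{\pm}$) I would split on whether $g$ is injective at $b$. If $g$ is not injective at $b$, then $g^{-1}[g(b)]\subseteq (fg)^{-1}[fg(b)]$ shows $fg$ is not injective at $b$ either, so both $\overline{fg}(kb)$ and $\overline{f}(\overline{g}(kb))=\overline{f}(g(b))$ collapse to $f(g(b))$. If $g$ is injective at $b$, then $\overline{g}(kb)=k\,g(b)$ with $g(b)\in\Omega^{\pm}$ on the same side as $b$ by Lemma~\ref{l:about limit points}, and the two sides agree provided $f$ is injective at $g(b)$ iff $fg$ is injective at $b$. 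The forward implication is the computation $(fg)^{-1}[fg(b)]=g^{-1}[f^{-1}[fg(b)]]=g^{-1}[g(b)]=\{b\}$. The reverse implication is the delicate point and the main obstacle: assuming $f$ is not injective at $g(b)$, I must produce a second $fg$-preimage of $fg(b)$. Here I would use that $g$, being both residuated and dually residuated, preserves existing joins and meets, together with $g(b)\in\Omega^{\pm}$, to find points arbitrarily close to $b$ whose $g$-images already lie inside the nondegenerate interval $f^{-1}[fg(b)]$; that $g(b)$ lies strictly inside this interval on the appropriate side follows from Lemma~\ref{l:F(O) bounded images}, since the endpoints of the interval cannot be the limit point $g(b)$. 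This yields the required second preimage and closes the case.

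For the lattice operations I would again argue pointwise, proving $\overline{f\vee g}=\overline{f}\vee\overline{g}$ (meet being dual). The crux is that on a new point $kb$ the limit-point lemmas force the relevant injectivity conditions to align. Concretely, when $f(b)=g(b)$ and $b\in\Omega^{+}$ one cannot have $f$ injective at $b$ while $g$ is not: injectivity of $f$ at $b$ forces $f(b)\in\Omega^{+}$ by Lemma~\ref{l:about limit points}, whereas non-injectivity of $g$ at $b$ forces $g(b)=f(b)\notin\Omega^{\pm}$ by Lemma~\ref{l:F(O) bounded images}, a contradiction; so $f$ and $g$ are simultaneously injective or not at $b$, and both sides then collapse to the same value. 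When $f(b)<g(b)$, injectivity of $f\vee g$ at $b$ matches that of $g$ (by the same limit-point argument applied to the value $g(b)$), so the larger summand governs both sides. Running through these few cases and their $\Omega^{-}$ duals gives $\overline{f\vee g}(kb)=\overline{f}(kb)\vee\overline{g}(kb)$. In particular $\overline{\text{}\cdot\text{}}$ is order-preserving, since $h_1\leq h_2$ iff $h_1\vee h_2=h_2$.

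Finally, for the residuals I would avoid a further case analysis by invoking uniqueness of adjoints in the pregroup $\mathbf{F(\overline{\Omega})}$: in any pregroup, $x^{r}$ is the unique $y$ with $xy\leq 1\leq yx$, and dually for $x^{\ell}$. Since $\overline{\text{}\cdot\text{}}$ preserves composition, the unit, and the order, from $ff^{r}\leq 1\leq f^{r}f$ in $\mathbf{F(\Omega)}$ we obtain $\overline{f}\,\overline{f^{r}}=\overline{ff^{r}}\leq\overline{1}=1\leq\overline{f^{r}f}=\overline{f^{r}}\,\overline{f}$, whence $\overline{f^{r}}=(\overline{f})^{r}$ by uniqueness; the argument for $\overline{f^{\ell}}=(\overline{f})^{\ell}$ is identical. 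Combining all clauses shows that $\overline{\text{}\cdot\text{}}$ is an $\ell$-pregroup embedding.
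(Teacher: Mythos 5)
Your proposal is correct, and its skeleton coincides with the paper's: injectivity by restriction to $\Omega$, pointwise verification of composition and the lattice operations by cases on injectivity at $b$ (for new points $kb$ with $k\neq 0$, $b\in\Omega^{\pm}$), and preservation of $^{\ell}$ and $^{r}$ by exactly the same residuation/uniqueness-of-adjoints argument the paper uses. The genuine difference lies in how the hard cases are discharged. The paper proves meet preservation (join being dual) by explicitly constructing second-preimage witnesses: in the both-non-injective case it squeezes a point $c$ strictly between $\max\{f^{\ell}f(b),g^{\ell}g(b)\}$ and $b$, and in the mixed case with $f(b)<g(b)$ it builds $c:=f^{r}f(b)$ when $b\in\Omega^{+}$ and $c:=d\vee f^{\ell}f(b)$ when $b\in\Omega^{-}$. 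You instead distill from Lemmas~\ref{l:about limit points} and~\ref{l:F(O) bounded images} the equivalence that, for $b\in\Omega^{\pm}$ and $u\in F(\mathbf{\Omega})$, $u$ is injective at $b$ iff $u(b)\in\Omega^{\pm}$ (on the same side), and use it to align the injectivity of $f\vee g$ at $b$ with that of the dominant summand; this organizes the case split by comparing $f(b)$ with $g(b)$ and eliminates the witness constructions entirely, which is more uniform and arguably cleaner than the paper's treatment. Conversely, in the composition step you supply a witness construction (via the meet/join preservation of $g$) for the implication that $g$ injective at $b$ and $f$ non-injective at $g(b)$ force $fg$ non-injective at $b$ --- an implication the paper uses without comment (its claim that injectivity of $fg$ at $b$ yields injectivity of $f$ at $fg(b)$); note that your own alignment equivalence would dispatch this in one line: $f$ non-injective at $g(b)$ forces $fg(b)\notin\Omega^{\pm}$ by Lemma~\ref{l:F(O) bounded images}, while injectivity of $fg$ at $b\in\Omega^{\pm}$ would force $fg(b)\in\Omega^{\pm}$. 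One point to spell out in a full write-up: in your equal-value case, the ``collapse to the same value'' requires checking that $f\vee g$ inherits (non-)injectivity at $b$ from $f$ and $g$; it follows from the same equivalence applied to $(f\vee g)(b)=f(b)$, but it is a step, not a triviality.
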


\begin{proof}
 If $\overline{f}=\overline{g}$, for some $f,g\in F(\m \Omega)$, then their restrictions to $\Omega$ are equal, hence $f=g$. Thus $f \mapsto \overline{f}$ is injective.
Also, by Lemma \ref{l:barfell}, we know that for all $f\in F(\m \Omega)$, $\overline{f^{\ell}}=(\overline{f})^{\ell}$ and $\overline{f^{r}}=(\overline{f})^{r}$.

To prove that for all $f,g\in F(\m \Omega)$, $\overline{f\circ g}=\overline{f}\circ\overline{g}$,  we will verify that $\overline{f\circ g}(kb)=\overline{f}\circ\overline{g}(kb)$, for all $kb\in\overline{\Omega}$. If $k=0$, then $kb=b \in \Omega$, so $g(b)\in\Omega$ and $\overline{(f\circ g)}(b)=(f\circ g)(b)=fg(b)=\overline{f}g(b)=\overline{f}\overline{g}(b)=(\overline{f}\circ \overline{g})(b)$. If $k \not = 0$, then $b \in \Omega^\pm$. 
If $|(f\circ g)^{-1}[(f\circ g)(b)]|=1$, then $|f^{-1}[fg(b)]|=1$ and
$|g^{-1}[g(b)]|=1$, so 
$(\overline{f}\circ\overline{g})(kb)=
\overline{f}\overline{g}(kb)=
\overline{f}(kg(b))=
kf(g(b))=
k(f\circ g)(b)=
\overline{f\circ g}(kb)$. 
If $|(f\circ g)^{-1}[(f\circ g)(b)]|>1$  and $|f^{-1}[fg(b)]|=1$ then $|g^{-1}[g(b)]|>1$ and  
$(\overline{f}\circ\overline{g})(kb)=
\overline{f}\overline{g}(kb)=
\overline{f}(g(b))=fg(b)=\overline{f\circ g}(kb)$.
Finally, if we have $|(f\circ g)^{-1}[(f\circ g)(b)]|>1$, 
and $|f^{-1}[fg(b)]|>1$, then $(\overline{f}\circ\overline{g})(kb)=
\overline{f}\overline{g}(kb)=
\overline{f}(n g(b))=
fg(b)=
(f\circ g)(b)=
\overline{(f\circ g)}(kb)$, where $n \in \{k, 0\}$. 

To check that $\overline{\text{}\cdot\text{}}:\mathbf{F(\Omega)}\rightarrow\mathbf{F(\overline{\Omega})}$ preserves meets, let $kb \in \overline{\Omega}$; the proof that it preserves joins is dual. If $k=0$, then $\overline{f\wedge g}(b)= (f\wedge g)(b)=f(b)\wedge g(b)= \overline{f}(b)\wedge \overline{g}(b)=(\overline{f} \wedge \overline{g})(b)$.
 If $k \not = 0$, then $b \in \Omega^\pm$. 
 We will consider several cases.

If  $|f^{-1}[f(b)]|=1$ and $|g^{-1}[g(b)]|=1$, then $|(f\wedge g)^{-1}[(f\wedge g)(b)]|=1$: If $c,c'\in\Omega$ with $c<b<c'$, then $f(c)<f(b)<f(c')$ and $g(c)<g(b)<g(c')$, hence $(f\wedge g)(c)<(f\wedge g)(b)<(f\wedge g)(c')$. Therefore, $\overline{f\wedge g}(kb)=k(f\wedge g)(b)=kf(b)\wedge kg(b)=\overline{f}(a)\wedge\overline{g}(kb)=(\overline{f}\wedge\overline{g})(kb)$.   

If  $|f^{-1}[f(b)]|>1$ and $|g^{-1}[g(b)]|>1$, then $|(f\wedge g)^{-1}[(f\wedge g)(b)]|>1$: By Lemma \ref{l: facts of fl and fr}(3), we have $b \in f^{-1}[f(b)]=[f^{\ell}(f(b)),f^r(f(b))]$ and  $b \in g^{-1}[g(b)]=[g^{\ell}(g(b)),g^r(g(b))]$ and by Lemma~\ref{l:F(O) bounded images} we have $f^{\ell}(f(b)), g^{\ell}(g(b))\not \in \Omega^-$ and also $f^r(f(b)), g^r(g(b)) \not \in \Omega^+$. 
So, in the case $b \in \Omega^-$, we have 
$f^{\ell}(f(b))$, $g^{\ell}(g(b))< b$, so
$\max\{f^{\ell}(f(b)), g^{\ell}(g(b))\}<b\leq \min\{f^r(f(b)),g^r(g(b))\}$ and since $b \in \Omega^-$, 
$\max\{f^{\ell}(f(b)), g^{\ell}(g(b))\}<c< b\leq \min\{f^r(f(b)),g^r(g(b))\}$, for some $c$.
So,  $b,c \in f^{-1}[f(b)]\cap g^{-1}[g(b)]$, hence 
$f(c)=f(b)$ and $g(c)=g(b)$; therefore $(f \mt g)(c)=f(c) \mt g(c)=f(b)\mt g(b)=(f \mt g)(b)$. Consequently, $|(f\wedge g)^{-1}[(f\wedge g)(b)]|>1$, so  $\overline{f\wedge g}(kb)=(f\wedge g)(b)=f(b)\wedge g(b)=\overline{f}(kb)\wedge\overline{g}(kb)=(\overline{f}\wedge\overline{g})(kb)$. The case where $b \in \Omega^+$ is similar. 
 
Now we consider the case where one function is injective at $b$ and the other is not. Without loss of generality, 
we assume that $|f^{-1}[f(b)]|>1$ and $|g^{-1}[g(b)]|=1$. Then  Lemma \ref{l:F(O) bounded images} yields $f(b) \notin \Omega^{\pm}$ and, since $b \in \Omega^\pm$ and $g^\ell(g(b))=b$, Lemma \ref{l:about limit points}(1,2) yields $g(b) \in \Omega^{\pm}$; hence $f(b) \not = g(b)$. 

If $g(b) < f(b)$, then $kg(b)<f(b)$. We show that $|(f\wedge g)^{-1}[(f\wedge g)(b)]|=1$, from which it follows that $\overline{f\wedge g}(kb)=k(f\wedge g)(b)=kg(b)=f(b)\wedge kg(b)=\overline{f}(kb)\wedge\overline{g}(kb)=(\overline{f}\wedge\overline{g})(kb)$. 
It suffices to show that, for all $c,c'\in \Omega$ such that $ c<b<c'$, we have $(f\wedge g)(c)<(f\wedge g)(b)<(f\wedge g)(c')$. 
From $|g^{-1}[g(b)]|=1$ we get $g(c)<g(b)<g(c')$, hence $g(c)<f(b)$. Therefore, we have
 $f(c) \mt g(c) \leq f(b) \mt g(c)=g(c)<
 g(b)=f(b)\wedge g(b)=g(b)<f(b) \mt g(c') \leq f(c') \mt g(c')$.

 If $f(b) < g(b)$, then  $f(b)<kg(b)$. We show that $|(f\wedge g)^{-1}[(f\wedge g)(b)]| > 1$, from which it follows that $\overline{f\wedge g}(kb) = (f\wedge g)(b) = f(b) = f(b) \wedge kg(b) = \overline{f}(kb) \wedge \overline{g}(kb) = (\overline{f} \wedge \overline{g})(kb)$. We distinguish two cases about what type of limit $b$ is.

If $b \in \Omega^+$, then $b <  f^rf(b)$, because $f(b) \notin \Omega^{\pm}$ implies $f^rf(b) \notin \Omega^{+}$ by Lemma~\ref{l:about limit points}(4). 
By Lemma~\ref{l: facts of fl and fr}(1), we have
$f(b) = ff^rf(b)= f(c)$, where $c:=f^rf(b)$.
Also, since $g$ is order-preserving and $|g^{-1}[g(b)]|=1$, we get $g(b) < g(c)$. 
Consequently, we have $b \not = c$ and
$(f\wedge g)(b) =f(b) \mt g(b)=f(b) \mt g(b) \mt g(c)= f(b) \mt g(c) = f(c) \mt g(c)= (f\wedge g)(c)$. 

If $b \in \Omega^-$, by the order preservation of $g$, we get 
$f(b)<g(b)=g(\bigvee \{d: d<b\})= \bigvee \{g(d): d<b\})$. So there is a $d<b$ such that $f(b)<g(d)$. Moreover, by Lemma~\ref{l:about limit points}(3), $f(b) \notin \Omega^{-}$ implies $f^\ell(f(b)) \notin \Omega^{\pm}$ and since $b \in \Omega^-$, we get $f^\ell(f(b))<b$. Taking $c:=d \jn f^\ell(f(b))$, we get 
 $f^\ell(f(b))\leq c<b$ and $f(b)<g(c)$. By Lemma~\ref{l: facts of fl and fr}(1) and the order-preservation of $f$, we get $f(b)=ff^\ell(f(b))\leq f(c) \leq f(b)$, so $f(b) = f(c)$. Also, since $g$ is order-preserving and $|g^{-1}[g(b)]|=1$, we get $g(c) < g(b)$. 
Consequently, we have $b \not = c$ and
$(f\wedge g)(b) =f(b) \mt g(b)=f(b) \mt g(b) \mt g(c)= f(b) \mt g(c) = f(c) \mt g(c)= (f\wedge g)(c)$. 

To show that the map also preserves inverses, let $f \in F(\m \Omega)$. Then, since $\m F(\m \Omega)$ is an $\ell$-pregroup, we have 
$
f^{\ell} f\leq1\leq ff^{\ell} 
$, so by applying the map, we get 
$
(\overline{f^{\ell}}) \overline{f} \leq 1\leq \overline{f} (\overline{f^{\ell}}) 
$. By applying residuation in $\m F(\overline{\m \Omega})$ we get 
$
\overline{f^{\ell}}  \leq 
(\overline{f}){^\ell}
\leq 
\overline{f^{\ell}}
$. Therefore, $(\overline{f})^{\ell}=\overline{f^{\ell}}$, and likewise $(\overline{f})^{r}=\overline{f^{r}}$.
\end{proof}

Together with the embedding theorem of \cite{GH} for distributive $\ell$-pregroups, Theorem~\ref{t:F(omega) F(overline(omega))} yields the following improved embedding theorem.

\begin{corollary}\label{c: embedding omega integral}
    Every distributive $\ell$-pregroup can be embedded in $\m F(\mathbf{\Omega})$ for some integral chain $\mathbf{\Omega}$.
\end{corollary}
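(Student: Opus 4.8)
The plan is to obtain the result by composing two embeddings that are already available. Given an arbitrary distributive $\ell$-pregroup $\m A$, I would first invoke the Cayley/Holland-style embedding theorem of \cite{GH}, which provides an $\ell$-pregroup embedding $\varphi \colon \m A \hookrightarrow \m F(\m \Omega)$ for some chain $\m \Omega$. At this stage $\m \Omega$ is an arbitrary chain and need not be integral, so this alone does not deliver the sharpened statement.

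Next, I would apply Theorem~\ref{t:F(omega) F(overline(omega))} to the chain $\m \Omega$ produced in the first step. That theorem supplies the $\ell$-pregroup embedding $\overline{\text{}\cdot\text{}} \colon \m F(\m \Omega) \rightarrow \m F(\overline{\m \Omega})$, and by the lemma preceding it the extended chain $\overline{\m \Omega}$ is integral. The composite $\overline{\text{}\cdot\text{}} \circ \varphi \colon \m A \rightarrow \m F(\overline{\m \Omega})$ is then an $\ell$-pregroup embedding, since a composition of embeddings is again an embedding, and its codomain is $\m F$ of the integral chain $\overline{\m \Omega}$, which is exactly what the corollary asserts.

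I expect no genuine obstacle at the level of this corollary: all of the substantive work has already been carried out in Theorem~\ref{t:F(omega) F(overline(omega))}, where one must verify that $\overline{\text{}\cdot\text{}}$ is injective and respects composition, the lattice operations, and the two inverses. In particular, the delicate case analysis there — distinguishing where $f$ is one-to-one from where it is many-to-one, and tracking the behavior at limit points via Lemmas~\ref{l:about limit points} and~\ref{l:F(O) bounded images} — is precisely what makes the extension functorial enough to be an embedding. Consequently the corollary follows immediately, and I would present it as a one-line composition of the cited embedding theorem with Theorem~\ref{t:F(omega) F(overline(omega))}.
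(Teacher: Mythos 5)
Your proposal matches the paper exactly: Corollary~\ref{c: embedding omega integral} is stated there as an immediate consequence of composing the embedding theorem of \cite{GH} with Theorem~\ref{t:F(omega) F(overline(omega))}, which is precisely your two-step composition. The argument is correct, and your observation that all the substantive work lives in Theorem~\ref{t:F(omega) F(overline(omega))} (including the integrality of $\overline{\m\Omega}$, established in the lemma preceding it) is accurate.
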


\begin{corollary}\label{c: failure in F(O) integral}
If an equation fails in $\mathsf{DLPG}$, then it fails in  $\mathbf{F(\Omega)}$, for some integral chain $\m \Omega$.
\end{corollary}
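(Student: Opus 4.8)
The plan is to obtain this corollary as an immediate consequence of the improved embedding theorem of Corollary~\ref{c: embedding omega integral}, together with the standard universal-algebraic fact that embeddings reflect failures of equations (equivalently, that equations are preserved under the formation of subalgebras).

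First I would spell out the meaning of the hypothesis. To say that an equation $s \approx t$ fails in the variety $\mathsf{DLPG}$ is to say that there is some distributive $\ell$-pregroup $\m A$ and an assignment $\bar{a}$ of the variables to elements of $A$ for which the interpretations $s^{\m A}(\bar{a})$ and $t^{\m A}(\bar{a})$ differ. Since $\mathsf{DLPG}$ is exactly the variety of all distributive $\ell$-pregroups, such a witness must exist whenever the equation is not a law of the variety.

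Next, I would apply Corollary~\ref{c: embedding omega integral} to the witnessing algebra $\m A$: there is an integral chain $\m \Omega$ and an $\ell$-pregroup embedding $\iota : \m A \hookrightarrow \mathbf{F(\Omega)}$. Because $\iota$ is an injective homomorphism between algebras of the same signature, it commutes with every term operation, so $\iota\bigl(s^{\m A}(\bar{a})\bigr) = s^{\mathbf{F(\Omega)}}(\iota\,\bar{a})$ and $\iota\bigl(t^{\m A}(\bar{a})\bigr) = t^{\mathbf{F(\Omega)}}(\iota\,\bar{a})$. Since $s^{\m A}(\bar{a}) \neq t^{\m A}(\bar{a})$ and $\iota$ is injective, the images are distinct as well, whence $s^{\mathbf{F(\Omega)}}(\iota\,\bar{a}) \neq t^{\mathbf{F(\Omega)}}(\iota\,\bar{a})$. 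Thus the equation fails in $\mathbf{F(\Omega)}$ with $\m \Omega$ integral, which is exactly the claim.

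I expect no genuine obstacle at this stage: all of the real work has already been absorbed into the construction of $\overline{\m \Omega}$ and the verification that $\overline{\,\cdot\,}$ is an embedding (Theorem~\ref{t:F(omega) F(overline(omega))}), and into the Cayley/Holland-style embedding theorem of \cite{GH} that underlies Corollary~\ref{c: embedding omega integral}. The present statement is merely the contrapositive reading of ``equations are preserved by subalgebras'' applied to that embedding, so the proof is a short routine argument rather than a new computation.
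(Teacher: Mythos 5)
Your proposal is correct and matches the paper's intent exactly: the corollary is stated without proof precisely because it is the routine consequence of Corollary~\ref{c: embedding omega integral} that you describe, namely that an injective homomorphism commutes with term operations and therefore reflects failures of equations. Nothing further is needed.
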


\section{From $\mathbf{F(\Omega)}$ to a diagram}\label{s: diagrams}

In this section, we show how the failure of an equation in $\m F(\m \Omega)$, where $\m \Omega$ is an integral chain, can be encoded into a finite object that we call a diagram. We will provide a running example in order to illustrate the main ideas, but first, we show that equations over distributive $\ell$-pregroups can be written in a suitable form.

\medskip 

For convenience, given an $\ell$-pregroup $\mathbf{A}=(A,\wedge,\vee,\cdot,1,^{\ell} ,^{r})$ and $x\in A$, we set $x^{(0)}:=x$ and
for all $m\in\mathbb{Z}^{+}$, $x^{(m)}:= x^{\ell^m}$, $x^{(-m)}:=x^{r^{m}}$. 

Note that every  equation $s=t$ in the language of
$\ell$-pregroups is equivalent to the conjunction of the equations $s \leq t$ and $t \leq s$, which by residuation  is equivalent to the conjunction of  $1\leq s^r t$ and $1 \leq t^r s$, and finally to the equation
$1\leq s^r t\wedge t^r s$; i.e., to an equation of the form $1 \leq q$.
Also, since in $\mathsf{DLPG}$ the maps $^\ell$ and $^r$ are anti-homomorphisms for multiplication
$$(x\cdot y)^{\ell} 	=y^{\ell} \cdot x^{\ell} \qquad
(x\cdot y)^{r}	=y^{r}\cdot x^{r}$$
and the De Morgan laws 
$$(x\wedge y)^{\ell}= x^{\ell} \vee y^{\ell} \quad
(x\vee y)^{\ell}= x^{\ell} \wedge y^{\ell} \quad
(x\wedge y)^{r}=x^{r}\vee y^{r}\quad
(x\vee y)^{r}=x^{r}\wedge y^{r}
$$
hold, all applications of the inverses $^\ell$ and $^r$ in $1 \leq q$ can be pushed down to the variables. Now, using lattice distributivity and the fact that multiplication distributes over meet, the equation $1 \leq q$ is equivalent to  $1 \leq q_1 \mt \cdots \mt q_k$, where the $q_i$'s do not use meet and all inverses are still pushed to the variables. Hence, the equation is equivalent to the conjunction   $1 \leq q_1$ and $\ldots$ and  $1 \leq q_k$, where we may assume that the variables in $q_i$ are different from the variables in $q_j$, by renaming variables. Therefore, this conjunction is equivalent to the equation $1 \leq q_1 \cdots q_k$ (the backward direction is obtained by setting all variables not in $q_i$ equal to $1$). Therefore, the equation is equivalent to one of the form $1 \leq w$, where the term $w$ does not involve meets and all inverses are pushed to the variables. Since multiplication distributes over join, this equation can in turn be written as 
$$1\leq w_{1}\vee\ldots\vee w_{n},$$
 where the $w_i$'s are \emph{intentional terms}: they involve only multiplication and inverses and the inverses have been pushed to the variables. In other words, intentional terms are of the form $$x_{1}^{(m_1)}x_{2}^{(m_2)}\ldots x_{k}^{(m_k)}$$
 where $x_1,\ldots, x_k$ are variables (not necessarily distinct), $k\in \mathbb{N}$  and $m_1, \ldots, m_k \in \mathbb{Z}$. We say that the equation displayed above is in \emph{(disjunctive) intentional form}. We have, therefore,  established the following result.

\begin{lemma}
   Every equation over  $\mathsf{DLPG}$ is equivalent to one in intentional form.
\end{lemma}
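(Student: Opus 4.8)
The plan is to transform an arbitrary equation into the required form through a sequence of reductions, each justified by a specific feature of $\mathsf{DLPG}$, always preserving logical equivalence over the variety. First I would reduce an arbitrary equation $s=t$ to a single inequation of the form $1 \leq q$: the inequation $s=t$ splits into $s \leq t$ and $t \leq s$, and since $\ell$-pregroups are residuated with multiplication matching its dual, each inequation $s \leq t$ is equivalent by residuation to $1 \leq s^r t$; conjoining the two inequations gives $1 \leq s^r t \wedge t^r s$, a single inequation $1 \leq q$.

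Next I would push all occurrences of the inverse operations $^{\ell}$ and $^{r}$ down onto the variables. This uses two families of identities valid in $\mathsf{DLPG}$: the anti-homomorphism laws $(xy)^{\ell} = y^{\ell} x^{\ell}$ and $(xy)^{r} = y^{r} x^{r}$, together with the De Morgan laws relating the inverses to meet and join. Applying these rewrite rules repeatedly moves every inverse inward until it sits directly on a variable; since each rule strictly decreases the total nesting depth of inverses over compound terms, the process terminates. The result is an inequation $1 \leq q$ in which inverses appear only on variables.

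I would then eliminate the meet connective from the right-hand side. Using lattice distributivity together with the fact that multiplication distributes over meet, $q$ can be rewritten so that meet is outermost, yielding $1 \leq q_1 \wedge \cdots \wedge q_k$ where no $q_i$ contains a meet. This single inequation is equivalent to the conjunction of the inequations $1 \leq q_i$. After renaming variables so that distinct conjuncts share no variables, the conjunction reassembles into one inequation $1 \leq q_1 \cdots q_k$; the forward direction is immediate by order-preservation of multiplication together with $1 \le 1$, and the backward direction follows by substituting $1$ for every variable not appearing in a given $q_i$, which collapses $q_1 \cdots q_k$ to $q_i$. Setting $w := q_1 \cdots q_k$, we now have an equivalent inequation $1 \leq w$ in which $w$ is meet-free and all inverses sit on variables. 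Finally, since multiplication distributes over join, $w$ can be put in the form $w_1 \vee \cdots \vee w_n$ where each $w_i$ is a product of variables-with-inverses, i.e.\ an intentional term $x_1^{(m_1)} \cdots x_k^{(m_k)}$; this yields the disjunctive intentional form $1 \leq w_1 \vee \cdots \vee w_n$.

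Each step here is a routine syntactic rewriting, and the only points requiring genuine care are bookkeeping ones: verifying that the termination of the inverse-pushing procedure is well-founded, and checking that the variable-renaming trick genuinely produces an equivalence rather than merely an implication. The most delicate step is the collapse of the conjunction $\bigwedge_i (1 \leq q_i)$ back into the single inequation $1 \leq q_1 \cdots q_k$, since the backward direction relies essentially on the variables being disjoint across the $q_i$ and on being able to evaluate the unit $1$ as a neutral element; this is where I would be most careful to confirm that specializing unused variables to $1$ is legitimate and recovers exactly each conjunct.
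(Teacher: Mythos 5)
Your proposal is correct and follows essentially the same route as the paper's proof: residuation to reduce $s=t$ to $1\leq q$, pushing inverses to variables via the anti-homomorphism and De Morgan laws, eliminating meets by distributivity, merging the conjuncts into a single product after renaming variables apart (with the backward direction recovered by substituting $1$, using $1^{(m)}=1$), and finally distributing multiplication over join. Your added remarks on termination of the inverse-pushing rewriting and on the legitimacy of the variable-specialization step are sound refinements of points the paper leaves implicit.
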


\begin{example}\label{e: diagram}
As a motivating example, we consider a simple equation in intentional form. The equation $1 \leq x^{\ell}x$ fails in  $\mathbf{F(\mathbb{Z})}$, because for the function $f \in \mathbf{F(\mathbb{Z})}$ given in the Figure \ref{f:building D}
 \begin{figure}[ht]\def\eq{=}
 \begin{center}
{\scriptsize
\begin{tikzpicture}
[scale=0.5]
\node[fill,draw,circle,scale=0.3,left](2) at (0,2){};
\node[left](2.1) at (-0.2,2){$8$};
\node[fill,draw,circle,scale=0.3,right](2r) at (2,2){};
\node[right](2.1) at (2.1,2){$8$};
\node[fill,draw,circle,scale=0.3,left](1) at (0,1){};
\node[left](1.1) at (-0.2,1){$7$};
\node[fill,draw,circle,scale=0.3,right](1r) at (2,1){};
\node[right](2.1) at (2.1,1){$7$};
\node[fill,draw,circle,scale=0.3,left](0) at (0,0){};
\node[left](0.1) at (-0.2,0){$6$};
\node[fill,draw,circle,scale=0.3,right](0r) at (2,0){};
\node[right](0.1) at (2.1,0){$6$};
\node[fill,draw,circle,scale=0.3,left](-1) at (0,-1){};
\node[left](-1.1) at (-0.2,-1){$5$};
\node[fill,draw,circle,scale=0.3,right](-1r) at (2,-1){};
\node[right](-1.1) at (2.1,-1){$5$};
\node[fill,draw,circle,scale=0.3,left](-2) at (0,-2){};
\node[left](-2.1) at (-0.2,-2){$4$};
\node[fill,draw,circle,scale=0.3,right](-2r) at (2,-2){};
\node[right](-2.1) at (2.1,-2){$4$};
\node[fill,draw,circle,scale=0.3,left](-3) at (0,-3){};
\node[left](-3.1) at (-0.2,-3){$3$};
\node[fill,draw,circle,scale=0.3,right](-3r) at (2,-3){};
\node[right](-3.1) at (2.1,-3){$3$};
\node[fill,draw,circle,scale=0.3,left](-4) at (0,-4){};
\node[left](-4.1) at (-0.2,-4){$2$};
\node[fill,draw,circle,scale=0.3,right](-4r) at (2,-4){};
\node[right](-4.1) at (2.1,-4){$2$};

\node at (-4)[below=-1pt]{$\vdots$};
\node at (-4r)[below=-1pt]{$\vdots$};
\node at (2)[above=3pt]{$\vdots$};
\node at (2r)[above=3pt]{$\vdots$};
\node at (1,-5){\normalsize$f$};
\draw[-](2)--(1r);
\draw[blue,->](1)--(-1r);
\draw[-](0)--(-1r);
\draw[-](-1)--(-1r);
\draw[-](-1)--(-1r);
\draw[red,->](-2)--(-1r);
\draw[red,->](-3)--(-4r);
\end{tikzpicture}
\qquad
\begin{tikzpicture}
[scale=0.5]

\node[fill,draw,circle,scale=0.3,left](2) at (0,2){};
\node[left](2.1) at (-0.2,2){$8$};
\node[fill,draw,circle,scale=0.3,right](2r) at (2,2){};
\node[right](2.1) at (2.1,2){$8$};
\node[fill,draw,circle,scale=0.3,left](1) at (0,1){};
\node[left](1.1) at (-0.2,1){$7$};
\node[fill,draw,circle,scale=0.3,right](1r) at (2,1){};
\node[right](2.1) at (2.1,1){$7$};
\node[fill,draw,circle,scale=0.3,left](0) at (0,0){};
\node[left](0.1) at (-0.2,0){$6$};
\node[fill,draw,circle,scale=0.3,right](0r) at (2,0){};
\node[right](0.1) at (2.1,0){$6$};
\node[fill,draw,circle,scale=0.3,left](-1) at (0,-1){};
\node[left](-1.1) at (-0.2,-1){$5$};
\node[fill,draw,circle,scale=0.3,right](-1r) at (2,-1){};
\node[right](-1.1) at (2.1,-1){$5$};
\node[fill,draw,circle,scale=0.3,left](-2) at (0,-2){};
\node[left](-2.1) at (-0.2,-2){$4$};
\node[fill,draw,circle,scale=0.3,right](-2r) at (2,-2){};
\node[right](-2.1) at (2.1,-2){$4$};
\node[fill,draw,circle,scale=0.3,left](-3) at (0,-3){};
\node[left](-3.1) at (-0.2,-3){$3$};
\node[fill,draw,circle,scale=0.3,right](-3r) at (2,-3){};
\node[right](-3.1) at (2.1,-3){$3$};
\node at (-3)[below=-1pt]{$\vdots$};
\node at (-3r)[below=-1pt]{$\vdots$};
\node at (2)[above=3pt]{$\vdots$};
\node at (2r)[above=3pt]{$\vdots$};
\node at (1,-5){\normalsize$f^{\ell}$};
\draw[-](1)--(2r);
\draw[-](0)--(2r);
\draw[red,->](-1)--(-2r);
\draw[-](-2)--(-2r);
\draw[-](-3)--(-2r);
\end{tikzpicture}
\qquad
\begin{tikzpicture}
[scale=0.5]

\node[fill,draw,circle,scale=0.3,left](1) at (0,1){};
\node[left](1.1) at (-0.2,1){$7$};
\node[fill,draw,circle,scale=0.3,right](1r) at (2,1){};
\node[right](2.1) at (2.1,1){$7$};
\node[fill,draw,circle,scale=0.3,left](-1) at (0,-1){};
\node[left](-1.1) at (-0.2,-1){$5$};
\node[fill,draw,circle,scale=0.3,right](-1r) at (2,-1){};
\node[right](-1.1) at (2.1,-1){$5$};
\node[fill,draw,circle,scale=0.3,left](-2) at (0,-2){};
\node[left](-2.1) at (-0.2,-2){$4$};
\node[fill,draw,circle,scale=0.3,right](-2r) at (2,-2){};
\node[right](-2.1) at (2.1,-2){$4$};
\node[fill,draw,circle,scale=0.3,left](-2) at (0,-2){};
\node[left](-3.1) at (-0.2,-3){$3$};
\node[fill,draw,circle,scale=0.3,right](-3r) at (2,-3){};
\node[right](-3.1) at (2.1,-3){$3$};
\node[fill,draw,circle,scale=0.3,left](-3) at (0,-3){};
\node[left](-4.1) at (-0.2,-4){$2$};
\node[fill,draw,circle,scale=0.3,right](-4r) at (2,-4){};
\node[right](-4.1) at (2.1,-4){$2$};
\node[fill,draw,circle,scale=0.3,left](-4) at (0,-4){};

\node at (1,-5){\normalsize$\mathbf{D}$};
\draw[blue,->](1)--(-1r);
\draw[red,->](-2)--(-1r);
\draw[red,->](-3)--(-4r);
\end{tikzpicture}
}
\caption{ Building a diagram from $f$}
\label{f:building D}
 \end{center}
\end{figure}
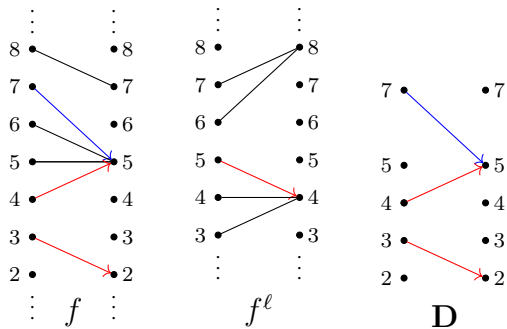
we have  $id_\mathbb{Z} \not \leq f^{\ell}f$ in $\mathbf{F(\mathbb{Z})}$, since $f^{\ell}f(7)=4<7= id_\mathbb{Z}(7)$. However, we can witness this  failure in a much smaller chain than $\mathbb{Z}$ and by keeping only part of $f$ and $f^\ell$: we can take the chain consisting of the three elements $7, 5, 4$ 
and also we can restrict $f$ and $f^\ell$ to partial functions $g$ and $g^{[\ell]}$ consisting only of $g(7)=5$ and $g^{[\ell]}(5)=4$; here we use the notation $g^{[\ell]}$ as the calculation will be done on the finite chain (and $g^\ell$ on a finite chain exists only when $g$ is the identity). As any failure should be witnessed only by a chain and a (partial) function $g$, without any reference to $g^{[\ell]}$, we could further translate the information $g^{[\ell]}(5)=4$ into information about $g$. For that we need to find a way to define $g^{[\ell]}$ for a partial function $g$ on a finite chain in a formal way. Also, this has to be done in a way that the computation of the partial function $g^{[\ell]}$ on the finite chain agrees with the computation of  $f^\ell$ on $\mathbb{Z}$, at least on the points that are of interest to us: $g^{[\ell]}(5)=4$. It turns out that to make sure that these computations agree, we need to include more elements in our finite chain and also define $g$ on some of these elements.  Lemma~\ref{l: bounded preimage} showcases the importance of covering pairs in the existence of $g^{[\ell]}$, so we will need to mark some pairs of elements of the finite chain as covering pairs, thus motivating the definition of a c-chain below.
\end{example}

\subsection{Diagrams} 
Failures of equations in $\m F(\m \Omega)$, where $\m \Omega$ is a chain, give rise to structures that we call diagrams, based on a chain of elements. 

 Given a chain $\mathbf{\Omega}$, a partial function $g$ over $\Omega$ is called \emph{order-preserving} if, for all $a,b\in Dom(g)$, $a\leq b$ implies $g(a)\leq g(b)$.

 A \emph{c-chain} is a triple $(\Delta,\leq,\diagcov)$, consisting of a finite chain $(\Delta,\leq)$ and a subset ${\diagcov}\subseteq{\prec}$ of the covering relation, i.e. if $a\diagcov b$, then $a$ is covered by $b$.
Every subset $\Delta'$ of a c-chain $\mathbf{\Delta}=(\Delta,\leq,\diagcov)$ defines a \emph{sub c-chain} $\mathbf{\Delta'}=(\Delta',\leq,\diagcov)$ of $\mathbf{\Delta}$, under the restrictions of the relations.

Clearly, every chain $\m \Omega$ gives rise to the c-chain  $(\mathbf{\Omega}, \prec)$, where $\prec$ is the covering relation of  $\m \Omega$, but other subsets of $\prec$ result in different c-chains on $\m \Omega$.

\medskip

Inspired by Lemma~\ref{l: bounded preimage}, given  a c-chain $\mathbf{\Delta}=(\Delta,\leq,\diagcov)$ and
a partial function $g$ on $\Delta$, we define the relation $g^{[\ell]}$ on $\Delta$ by: for all $x, b\in\Delta$, $(x,b) \in g^{[\ell]}$ iff $b\in Dom(g)$ and there exists $a\in Dom(g)$ such that $a\diagcov b$ and $g(a)<x\leq g(b)$. 
Also, we define the relation $g^{[r]}$ on $\Delta$ by: for all $x, a\in\Delta$, $(x,a) \in g^{[r]}$ iff $a\in Dom(g)$, there exists $b\in Dom(g)$, such that $a\diagcov b$ and $g(a)\leq x< g(b)$.

\begin{lemma}
If $\m \Delta$ is a c-chain and $g$ is an order-preserving partial function on $\m \Delta$, then $g^{[\ell]}$ and $g^{[r]}$ are order-preserving partial functions over $\m \Delta$. 
\end{lemma}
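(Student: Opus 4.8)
The plan is to show two things for each of the two relations: that it is a well-defined partial function (i.e. single-valued), and that this partial function is order-preserving. I will treat $g^{[\ell]}$ in detail; the argument for $g^{[r]}$ is dual, obtained by reversing the order and swapping the roles of the cover data. The key point to keep in mind is that the defining condition for $(x,b) \in g^{[\ell]}$ packages together three ingredients: $b \in Dom(g)$, the existence of a witness $a \in Dom(g)$ with $a \diagcov b$, and the squeezing inequality $g(a) < x \leq g(b)$. Since $\diagcov$ is contained in the covering relation of the chain and $\m \Delta$ is a chain, each $b$ has at most one element $a$ with $a \diagcov b$, so the witness $a$ is uniquely determined by $b$.

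First I would verify single-valuedness. Suppose $(x,b)$ and $(x,b') \in g^{[\ell]}$ with $b \neq b'$; without loss of generality $b < b'$ since $\m \Delta$ is a chain. Let $a, a' \in Dom(g)$ be the associated witnesses, so $a \diagcov b$, $a' \diagcov b'$, $g(a) < x \leq g(b)$ and $g(a') < x \leq g(b')$. From $b < b'$ and $a' \diagcov b'$ (so $a' \prec b'$) together with order-preservation of $g$, I expect to derive $g(b) \leq g(a')$, which contradicts $g(a') < x \leq g(b)$. The mechanics here use that $b < b'$ forces $b \leq a'$ (since $a'$ is the element immediately below $b'$ in the chain and $b$ lies strictly below $b'$), and then $g(b) \leq g(a')$ by order-preservation, giving the contradiction. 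This shows that for each $x$ there is at most one $b$ with $(x,b) \in g^{[\ell]}$, so $g^{[\ell]}$ is a partial function.

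Next I would check order-preservation: if $(x,b), (x',b') \in g^{[\ell]}$ with $x \leq x'$, I must show $b \leq b'$. Arguing by contradiction, suppose $b' < b$. With witnesses $a \diagcov b$ and $a' \diagcov b'$, I have $g(a) < x \leq x' \leq g(b')$ but also $b' < b$ forces $b' \leq a$ (again because $a$ is the cover-predecessor of $b$), so $g(b') \leq g(a)$ by order-preservation, contradicting $g(a) < g(b')$. Hence $b \leq b'$, as required. Finally I would remark that $Dom(g^{[\ell]})$ is exactly the set of $x$ for which a suitable $b$ exists, so $g^{[\ell]}$ is indeed a partial function on $\Delta$, and the dual reasoning establishes the same conclusions for $g^{[r]}$.

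The main obstacle, such as it is, lies in bookkeeping the interaction between the strict and non-strict inequalities in the squeezing condition $g(a) < x \leq g(b)$ and being careful about the direction of the covering arrows. In particular the asymmetry between $<$ on the left (involving the witness $a$) and $\leq$ on the right (involving $b$ itself) is exactly what makes single-valuedness and order-preservation work, and I want to make sure each contradiction exploits the correct one of these two inequalities. Once the uniqueness of the cover-witness $a$ for a given $b$ is isolated as the governing fact, the rest is a short chain of applications of order-preservation of $g$, and no deeper structure (such as residuation or the limit-point analysis) is needed for this lemma.
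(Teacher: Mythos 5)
Your proposal is correct and takes essentially the same approach as the paper: the paper proves the single monotonicity statement (if $(x,b),(y,b')\in g^{[\ell]}$ and $x\leq y$, then comparing the covering witness $a\diagcov b$ with $b'$ and using order-preservation of $g$ forces $b\leq b'$) and then obtains single-valuedness as the special case $x=y$, whereas you run the same covering-witness contradiction twice, once for each property. The inequalities you exploit (e.g.\ $b'\leq a$ giving $g(b')\leq g(a)$ against $g(a)<x\leq g(b')$) are exactly those in the paper's argument, and your appeal to duality for $g^{[r]}$ matches the paper's treatment as well.
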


\begin{proof}
  If $(x,b), (y,b') \in g^{[\ell]}$ and $x \leq y$, then there exist $a,a' \in Dom(g)$ such that $a\diagcov b$, $g(a)<x\leq g(b)$, $ a' \diagcov b'$ and $g(a')<y\leq g(b')$; therefore,  $g(a)< g(b')$.
Note that $b' \leq a$ leads to the contradiction $g(b') \leq g(a)$, by the order preservation of $g$, so $a < b'$, since $\m \Omega$ is a chain. Now, since  $a \prec b$ and $a<b'$, we have $b \leq b'$.   

In particular,   $(x,b), (x,b') \in g^{[\ell]}$ implies $b'=b$; consequently $g^{[\ell]}$ is a partial function. As  usual, we write $g^{[\ell]}(x)=b$ for $(x,b) \in g^{[\ell]}$.
Rewriting the first paragraph in this notation yields that if $x \leq y$, then $g^{[\ell]}(x) \leq g^{[\ell]}(y)$. Thus, $g^{[\ell]}$ is order preserving.
\end{proof}

We mention that it can be shown (following the proofs of Lemmas~\ref{l:1&2 in integral} and \ref{l: bounded preimage})  that if $\m \Omega$ is an integral chain and $f$ an order-preserving function on $\m \Omega$, then the lower residual $f^\ell$ of $f$ exists on $\m \Omega$ iff the partial function $f^{[\ell]}$ on the c-chain $(\m{\Omega}, \prec)$ is a total function, and also in this case the two coincide. This is why we have chosen to use a similar notation for $f^\ell$ and $f^{[\ell]}$. However, we will only apply this notion for finite chains.

 Given a c-chain $\m \Delta$, and an order-preserving partial function $g$ on $\m \Delta$, we define $g^{[n]}$, for all $n \in \mathbb{N}$, recursively by:  $g^{[0]}=g$ and $g^{[k+1]}:=(g^{[k]})^{[\ell]}$. Also,  we define $g^{[-n]}$, for all $n \in \mathbb{N}$, recursively by $g^{[-(k+1)]}:=(g^{[k]})^{[r]}$.

\medskip

Having defined a way to compute (iterated) inverses of a partial function $g$ on a c-chain, our next goal is to make sure that we make the c-chain and the partial function $g$ large enough so that the computation of $g^{[\ell]}$ is performed correctly on the points of interest. Going back to our Example~\ref{e: diagram}, we see that to make sure that $g^{[\ell]}(5)=4$, we need to include the values $g(4)=f(4)=5$ and $g(3)=f(3)=2$, together with the covering relation $3 \diagcov 4$. Using the notation $-a$ for the lower cover of $a$, we see that $g$ needs to be defined on 
    $4=f^{\ell}f(7)$ and on $3=-f^{\ell}f(7)$,
 and it yields the values 
 $5=ff^{\ell}f(7)$ and $2=f-f^{\ell}f(7)$.
 
So, the sets of points (expressed in terms of the original point $7$) guaranteeing that $g(7)$ and $g^{[\ell]}(5)$ are computed correctly are respectively:
$$\Delta_{f,0}^7:=\{ 7, f(7) \}=\{7,5\}$$
$$ \Delta_{f,1}^{f(7)}: =\{f(7), f^{\ell}f(7), -f^{\ell}f(7), f-f^{\ell}f(7)\}=\{5, 4, 3, 2\}$$

\medskip 

    To simplify the notation in the following definition and subsequent results, we write 
    $-a$ and also $(-1)a$ for $a-1$, the lower cover of $a$; likewise we write $+a$ and $1a$ for $a+1$, the upper cover of $a$. Finally, we write $0a$ for $a$.

 Given an integral chain $\mathbf{\Omega}$, $f\in F(\mathbf{\Omega})$, $a\in\Omega$ and $m\in\mathbb{\mathbb{N}}$, we define the sets:
\begin{align*}
\Lambda^a_{f,m} & :=\{ \sigma_{1}f^{(1)}\ldots\sigma_{m}f^{(m)}(a):\,\sigma_1,\ldots,\sigma_{m}\in\{-1,0\}\}\\
\Delta_{f,m}^a & :=\{ a\} \cup \bigcup_{j=0}^{m} \{ \sigma_{j}f^{(j)}\ldots\sigma_{m}f^{(m)}(a):\,\sigma_j,\ldots,\sigma_{m}\in\{-1,0\},\sigma_0=0\}\\
\Lambda^a_{f,-m} & :=\{ \sigma_{1}f^{(-1)}\ldots\sigma_{m}f^{(-m)}(a):\,\sigma_{1},\ldots,\sigma_{m}\in\{1,0\}\}\\
\Delta^a_{f,-m} & :=\{ a\} \cup \bigcup_{j=0}^{m} \{ \sigma_{j}f^{(-j)}\ldots\sigma_{m}f^{(-m)}(a):\,\sigma_{j},\ldots,\sigma_{m}\in\{1,0\},\sigma_0=0\}
\end{align*}
In particular, when $m=0$, $\Lambda^a_{f,0}$ must be interpreted as $\{a\}$ and $\Delta_{f,0}^a$ as $\{a,f(a)\}$. Also, given an integral chain $\mathbf{\Omega}$, $f\in F(\mathbf{\Omega})$, $a\in\Omega$ and $m\in\mathbb{\mathbb{Z}}$ we define the sub c-chain $\mathbf{\Delta}^a_{f,m}=(\Delta^a_{f,m},\leq,\prec)$ of $(\mathbf{\Omega},\prec)$.

\medskip

The next two lemmas, show that the iterated inverses of a partial function on a finite c-chain are computed correctly, provided we take the chain to include the points considered above.

\begin{lemma}\label{l: Delta subf,m,a}
If $\mathbf{\Omega}$ is an integral chain,  $f\in F(\mathbf{\Omega})$, $a\in\Omega$, $m\in\mathbb{\mathbb{Z}}$,
$\mathbf{\Delta}$ is a sub c-chain of $(\mathbf{\Omega},\prec)$ containing $\Delta^a_{f,m}$, and $g$ is an order-preserving partial function over $\mathbf{\Delta}$ such that $g|_{\Lambda^a_{f,m}}=f|_{\Lambda^a_{f,m}}$, then $g^{[m]}(a)=f^{(m)}(a)$.
\end{lemma}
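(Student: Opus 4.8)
The plan is to induct on $|m|$, proving the case $m \ge 0$ in detail and obtaining $m<0$ by the dual argument (interchanging residuals with dual residuals, lower covers with upper covers, and the sign sets $\{-1,0\}$ with $\{0,1\}$ throughout). The base case $m=0$ is immediate: since $\Lambda^a_{f,0}=\{a\}$, the hypothesis $g|_{\Lambda^a_{f,0}}=f|_{\Lambda^a_{f,0}}$ gives $g^{[0]}(a)=g(a)=f(a)=f^{(0)}(a)$. For the inductive step I would set $h:=f^{(m-1)}$, so that $f^{(m)}=h^{\ell}$ and $g^{[m]}=(g^{[m-1]})^{[\ell]}$, and designate the candidate value $b^*:=f^{(m)}(a)=h^{\ell}(a)$ together with its lower cover $a^*:=b^*-1$ (which exists because $\m \Omega$ is integral).

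First I would locate the witnessing covering pair. Using the formula $h^{\ell}(a)=\min\{z:a\le h(z)\}$ and $a^*<b^*$, one gets $h(a^*)<a\le h(b^*)$; moreover $a^*\prec b^*$ in $\m \Omega$, so $a^*\diagcov b^*$ in $\m \Delta$ as soon as $a^*,b^*\in\Delta$. Taking $j=m$ (with $\sigma_m$ ranging over $\{-1,0\}$) in the definition of $\Delta^a_{f,m}$ shows exactly that $\{b^*,a^*\}=\{f^{(m)}(a),(-1)f^{(m)}(a)\}\subseteq\Delta^a_{f,m}\subseteq\Delta$, which secures membership.

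The heart of the argument is to apply the inductive hypothesis at the two base points $a^*$ and $b^*$ with parameter $m-1$. This needs the inclusions $\Lambda^{p}_{f,m-1}\subseteq\Lambda^a_{f,m}$ and $\Delta^{p}_{f,m-1}\subseteq\Delta^a_{f,m}$ for $p\in\{a^*,b^*\}$. These follow from the recursive \emph{peeling} structure of the defining families: each $p=\sigma_m f^{(m)}(a)$ (with $\sigma_m=0$ for $b^*$ and $\sigma_m=-1$ for $a^*$) is obtained by prepending the innermost block $\sigma_m f^{(m)}$ to every string defining $\Lambda^{p}_{f,m-1}$ and $\Delta^{p}_{f,m-1}$, so substituting $p$ carries these strings verbatim into the $m$-indexed families. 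Granting the inclusions, $g|_{\Lambda^a_{f,m}}=f|_{\Lambda^a_{f,m}}$ restricts to agreement on $\Lambda^{a^*}_{f,m-1}$ and $\Lambda^{b^*}_{f,m-1}$, while $\Delta\supseteq\Delta^a_{f,m}$ contains $\Delta^{a^*}_{f,m-1}$ and $\Delta^{b^*}_{f,m-1}$; hence the inductive hypothesis yields $g^{[m-1]}(a^*)=f^{(m-1)}(a^*)=h(a^*)$ and $g^{[m-1]}(b^*)=h(b^*)$, both in particular defined.

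Finally I would assemble the pieces: $a^*\diagcov b^*$ in $\m \Delta$, both $a^*,b^*\in Dom(g^{[m-1]})$, and $g^{[m-1]}(a^*)=h(a^*)<a\le h(b^*)=g^{[m-1]}(b^*)$, which by the definition of $(g^{[m-1]})^{[\ell]}=g^{[m]}$ exhibits $(a,b^*)\in g^{[m]}$; since $g^{[m]}$ is a partial function (being the $[\ell]$ of the order-preserving partial function $g^{[m-1]}$), we conclude $g^{[m]}(a)=b^*=f^{(m)}(a)$. The hard part will be the bookkeeping in this inductive step, namely handling the index shift between the $(m-1)$- and $m$-indexed families $\Lambda$ and $\Delta$ and checking that the constraint $\sigma_0=0$ together with the $j=0$ (image) terms matches up correctly under the substitution $p=\sigma_m f^{(m)}(a)$.
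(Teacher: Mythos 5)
Your proposal is correct and is essentially the paper's own argument in a different parametrization: the paper fixes $m$ and inducts upward on $i$, proving $g^{[i]}(x)=f^{(i)}(x)$ for all $x=\sigma_{i+1}f^{(i+1)}\cdots\sigma_m f^{(m)}(a)$, which is exactly your induction on $m$ with varying base point once unwound, and the key step is identical in both (two applications of the hypothesis at $-F^{\ell}(y)$ and $F^{\ell}(y)$, the formula $F^{\ell}(y)=\min\{b: y\le F(b)\}$ together with chain totality giving $G(-F^{\ell}(y))<y\le G(F^{\ell}(y))$, and then the covering-pair clause in the definition of $[\ell]$). The syntactic inclusions $\Lambda^{p}_{f,m-1}\subseteq\Lambda^{a}_{f,m}$ and $\Delta^{p}_{f,m-1}\subseteq\Delta^{a}_{f,m}$ for $p\in\{-f^{(m)}(a),\,f^{(m)}(a)\}$ that your packaging requires do hold by the substitution you describe (the paper avoids stating them only because its induction statement is already quantified over all such strings), so there is no gap.
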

\begin{proof}
    We will show the result for $m\in\mathbb{N}$, since the proof for the case $m\in\mathbb{Z}^-$ is dual.
    Using induction on $i$, we will prove that for all $0\leq i\leq m$:  $g^{[i]}(x)=f^{(i)}(x)$, for all $x=\sigma_{i+1}f^{(i+1)}\ldots\sigma_{m}f^{(m)}(a)$, where $\sigma_{i+1},\ldots,\sigma_{m}\in\{-1,0\}$. Note that  $g^{[m]}(a)=f^{(m)}(a)$ is the special case of the statement for $i=m$, as in that case we have $\sigma_{i+1}f^{(i+1)}\ldots\sigma_{m}f^{(m)}(a)=a$.
    
    If $\sigma_1,\ldots,\sigma_{m}\in\{-1,0\}$ and $x=\sigma_{1}f^{(1)}\ldots\sigma_{m}f^{(m)}(a)$, then $x\in\Lambda^a_{f,m}$, so  $g^{[0]}(x)=g(x)=f(x)=f^{(0)}(x)$.
    
    Assume the statement holds for $i=n<m$: for all  $\sigma_{n+1},\ldots,\sigma_{m}\in\{-1,0\}$ and $x=\sigma_{n+1}f^{(n+1)}\ldots\sigma_{m}f^{(m)}(a)$, we have $g^{[n]}(x)=f^{(n)}(x)$. Now let $\sigma_{n+2}$, $\ldots$, $\sigma_{m}\in\{-1,0\}$, and  $y=\sigma_{n+2} f^{(n+2)}\ldots\sigma_{m}f^{(m)}(a)$; we will show that $g^{[n+1]}(y)=f^{(n+1)}(y)$. Since $-1, \sigma_{n+2},\ldots,\sigma_{m}\in\{-1,0\}$ and $0, \sigma_{n+2},\ldots,\sigma_{m}\in\{-1,0\}$, two applications of the inductive hypothesis give $g^{[n]}(-f^{(n+1)}(y))=f^{(n)}(-f^{(n+1)}(y))$ and also $g^{[n]}(f^{(n+1)}(y))=f^{(n)}(f^{(n+1)}(y))$. 
    So for $G:=g^{[n]}$ and $F:=f^{(n)}$, we have $G(-F^{(\ell)}(y))=F(-F^{(\ell)}(y))$ and $G(F^{(\ell)}(y))=F(F^{(\ell)}(y))$. We will show that $G^{[\ell]}(y)=F^{\ell}(y)$, which implies  $g^{[n+1]}(y)=G^{[\ell]}(y)=F^{\ell}(y)=f^{(n+1)}(y)$, as desired.
    Indeed, we have $F^{\ell}(y)=\min \{ b\in \Omega :y\leq F (b) \}$, so  $y\leq F(F^{\ell}(y))$ and $y\nleq F(-F^{\ell}(y))$; hence $F(-F^{\ell}(y))<y$, given that $\mathbf{\Omega}$ is a chain. So $G(-F^{\ell}(y))=F(-F^{\ell}(y))<y\leq F(F^{\ell}(y))=G(F^{\ell}(y))$, thus $G^{[\ell]}(y)=F^{\ell}(y)$, by the definition of $G^{[\ell]}$.
\end{proof}

 A \emph{diagram} $(\m \Delta, f_{1},\ldots,f_{n})$, consists of a finite c-chain $\m \Delta$ and  order-preserving partial functions $f_{1},\ldots,f_{n}$ on $\m \Delta$, where $n\in\mathbb{N}$.

 \subsection{Failure of an equation and syntax} We will define what we mean by the failure of an equation on a diagram. 
 First recall that a failure of an equation $s=t$ in $\m F(\m \Omega)$ consists of a homomorphism $\varphi: \m {Tm} \rightarrow \m F(\m \Omega)$, from the algebra $\m {Tm}$ of all terms in the language of $\ell$-pregroups, such that $\varphi(s) \not = \varphi(t)$; clearly only the values of $\varphi$ on  terms in the variables involved in the equation matter. Since equations can be written in intentional form  $1\leq w_{1}\vee\ldots\vee w_{k}$, and $\varphi(1) \not \leq \varphi( w_{1}\vee\ldots\vee w_{k})$ is equivalent to $\varphi(1) \not \leq \varphi( w_{1}) \vee\ldots\vee \varphi(w_{k})$, we need only assume that $\varphi$ is an intentional homomorphism (in the language of multiplication and inverses). Finally, since $\m F(\m \Omega)$  consists of functions,  $\varphi(1) \not \leq \varphi( w_{1}) \vee\ldots\vee \varphi(w_{k})$ if and only if there exist $p \in \Omega$ such that $\varphi(1)(p) \not \leq \varphi( w_{1})(p) \vee\ldots\vee \varphi(w_{k})(p)$, and since $\m \Omega$ is a chain this is equivalent to $\varphi(1)(p) > \varphi( w_{1})(p), \ldots  , \varphi(w_{k})(p)$.
 We will ask something similar for failure in a diagram, but we need an intentional algebra playing the role of $\m F(\m \Omega)$. 

 Given a c-chain $\m \Delta$, we define the algebra $\m {Pf}(\m \Delta)=(Pf(\m \Delta), {\circ}, ^{[\ell]}, ^{[r]}, i_\Delta)$, where $Pf(\m \Delta)$ is the set of all the order-preserving partial functions over $\m \Delta$, $\circ$ is  the composition of partial functions/relations, $i_\Delta$ is the identity function on $\Delta$ and  $f \mapsto f^{[\ell]}$ and $f \mapsto f^{[r]}$ are the two inversion operations as defined on  $\m \Delta$. 

We say that the equation $1\leq w_{1}\vee\ldots\vee w_{k}$ in intentional form over variables $x_1, \ldots, x_n$  \emph{fails} in a diagram  $(\m \Delta, f_{1},\ldots,f_{n})$ if there is an intentional homomorphism $\varphi: \m {Ti} \rightarrow \m {Pf}(\m \Delta)$, from the algebra $\m {Ti}$ of all intentional terms, and a point $p \in \Delta$ such that   $\varphi(1)(p) > \varphi( w_{1})(p), \ldots  , \varphi(w_{k})(p)$ and $\varphi(x_i)=f_i$ for all $1 \leq i\leq n$.

\medskip

 Given a  c-chain $\m \Delta$, note that the relation $\diagcov$ is actually itself an order-preserving partial function on $\m \Delta$. Instead of writing ${\diagcov}(a)=b$ for $a\diagcov b$, we write $+(a)=b$; so when viewing $\diagcov$ as an element of  $Pf(\m \Delta)$ we denote this partial function by $+$. Likewise, the converse of the relation $\diagcov$ is also an order-preserving partial function, and we denote it by $-$, when viewing it as an element of $Pf(\m \Delta)$:  $-(a)=b$ iff $b\diagcov a$. We denote by $\m {Pf}(\m \Delta)^{\pm}$ the expansion of the algebra $\m {Pf}(\m \Delta)$ with these two distinguished elements $+$ and $-$. Also, every intentional homomorphism  $\varphi: \m {Ti} \rightarrow \m {Pf}(\m \Delta)$ extends to a homomorphism $\varphi^\pm: \m {Ti}^{\pm} \rightarrow \m {Pf}(\m \Delta)^{\pm}$, where $\m {Ti}^{\pm}$ is the expansion with the two constants. Likewise, when $\m \Omega$ is an integral chain, we can consider homomorphisms $\varphi^\pm: \m {Ti}^{\pm} \rightarrow \m F(\m \Omega)^{\pm}$, where $\m F(\m \Omega)^{\pm}$ denotes the expansion of $\m F(\m \Omega)$ with the functions $+$ and $-$ defined by $+x=x+1$ and $-x=x-1$. 
 This expansion will be useful in the proof of Theorem~\ref{t: DLPG to comsur}, when we will be making an inductive argument in the expanded language (with $+$ and $-$).

\medskip

Given an equation $\varepsilon$ in  intentional form $1\leq w_{1}\vee\ldots\vee w_{k}$ we will define a purely syntactic object consisting of a large enough set $\Delta_\varepsilon$ of terms that will serve as a template for evaluations into a sufficient collection of failing diagrams for $\varepsilon$. First we
define the set of \emph{final subwords} of $\varepsilon$
  \[
 FS_\varepsilon:=\{ u: w_1=vu\text{ or  } ... \text{ or  } w_k=vu, \text{ for some } u\} 
  \]
  We actually take $\m {Ti}$ to satisfy the equalities $v1=v=1v$, so strictly speaking we take it to be a quotient of the absolutely free algebra. With this understanding, 
 $FS_\varepsilon$ contains $1$.
  Also, for a variable $x$ among the variables $x_1, \ldots, x_n$ of  $\varepsilon$, $m\in\mathbb{N}$ and $v\in FS_\varepsilon$ we define the following sets; they are syntactic analogues of the $\Delta$'s that were defined relative to a chain $\m \Omega$ and an $f \in F(\m \Omega)$.
  \begin{align*}
 \Delta_{x,m}^{v} & :=\{ v\} \cup \bigcup_{j=0}^{m} \{ \sigma_{j}x^{(j)}\ldots\sigma_{m}x^{(m)}v:\,\sigma_j,\ldots,\sigma_{m}\in\{-1,0\},\sigma_0=0\}\\
\Delta_{x,-m}^{v} & :=\{ v\} \cup \bigcup_{j=0}^{m} \{ \sigma_{j}x^{(-j)}\ldots\sigma_{m}x^{(-m)}v:\,\sigma_{j},\ldots,\sigma_{m}\in\{1,0\},\sigma_0=0\}\\
S_\varepsilon&:=\{ (i,m,v):i\in\{1,\ldots,n\},m\in\mathbb{Z},v\in FS_\varepsilon\text{ and }x_{i}^{(m)}v\in FS_\varepsilon\} \\
\Delta_{\varepsilon}&:=\{1\} \cup \underset{(i,m,v)\in S}{\bigcup}\Delta_{x_{i},m}^{v}
\end{align*}

Observe that if $w \in FS_\varepsilon$, then $w=1$ (in which case it is in $\Delta_\varepsilon$) or it is of the form $x_i^{(m)}v$, where $v \in FS_\varepsilon$, $i \in \{1, \ldots, n\}$ and $m \in \mathbb{Z}$. So, $x_i^{(m)}v \in \Delta_{x_i,m}^{v}$ (it appears for the case where $j=m$ in the union in the formula for $\Delta_{x_i,m}^{v}$ for $\sigma_m=0$).
Therefore, $FS_\varepsilon \subseteq \Delta_\varepsilon$. When the equation $\varepsilon$ is understood from the context we will omit the subscripts in $FS_\varepsilon$ and $S_\varepsilon$. Clearly $FS_\varepsilon$ is finite. Also, since there are only finitely many $m 
\in \mathbb{Z}$ such that $x_i^{(m)}$ appears in $\varepsilon$, the set $S_\varepsilon$ is finite, hence also $\Delta_\varepsilon$ is finite.

\subsection{Compatible surjections}

The following concept turns out to be a useful intermediate step between a failure of an equation in $DLPG$ and a failure in a diagram. Also, it will provide a convenient language to express the algorithm that yields decidability for $\mathsf{DLPG}$.

Given an equation $\varepsilon(x_1,\ldots,x_n)$ in intentional form, 
a \emph{compatible surjection} for $\varepsilon$ is  an onto map $\varphi:\Delta_{\varepsilon}\rightarrow\mathbb{N}_q$, where $\mathbb{N}_q=\{1, \ldots, q\}$ is
 an initial segment of $\mathbb{Z}^+$ under the natural order (and $q\leq |\Delta_\varepsilon|$), such that:
\begin{itemize}
    \item[(i)] The relation $g_i:=\{(\varphi(u),\varphi(x_iu)) \mid u, x_iu\in\Delta_{\varepsilon}\}$ on $\mathbb{N}_q$ is an order-preserving partial function for all $i \in \{1,\ldots,n\}$.
    \item[(ii)]  The relation ${\diagcov} :=\{(\varphi(v), \varphi(+v)) \mid v, +v\in\Delta_{\varepsilon}\} \cup \{(\varphi(-v),\varphi(v)) \mid v, -v\in\Delta_{\varepsilon}\}$ on $\mathbb{N}_q$ is contained in the covering relation $\prec$ of $\mathbb{N}_q$.
    \item[(iii)] $\varphi(x_i^{(m)}u)=g_i^{[m]}(\varphi(u))$, when $i \in \{1,\ldots,n\}$, $m\in\mathbb{Z}$ and $u, x_i^{(m)}u\in\Delta_{\varepsilon}$.
\end{itemize}

Note that the first two conditions ensure that $\m D_{\varepsilon, \varphi}:= (\mathbb{N}_q, {\leq}, {\diagcov}, g_1, \ldots, g_n)$ is a diagram; in (iii),  $g_i^{[m]}$ is calculated in this diagram. Also, it follows that the relation $\diagcov$ is an order-preserving partial function. The most interesting feature of compatible surjections is that even though they seem to simply assign to $u \in \Delta_\varepsilon$ a value $\varphi(u)$ in a chain $\mathbb{N}_q$, they also indirectly assign to $u \in \Delta_\varepsilon$ a partial function on the diagram $(\mathbb{N}_q, \leq, \diagcov)$: the last condition states that the  function $\varphi_\varepsilon:\Delta_{\varepsilon}\rightarrow \m {Pf}(\mathbb{N}_q, {\leq}, {\diagcov})^\pm$ is a intentional homomorphism (between partial algebras), where $\varphi_\varepsilon(u)(\varphi(v))=\varphi(uv)$, for $uv \in \Delta_\varepsilon$. Therefore, terms in $\Delta_\varepsilon$ yield not only the elements of a chain but also the partial functions on this chain.

We say that the equation $1\leq w_{1}\vee\ldots\vee w_{k}$ in intentional form \emph{fails} in a compatible surjection $\varphi$, if $\varphi(w_1), \ldots, \varphi(w_k)<\varphi(1)$.

\begin{theorem}\label{t: DLPG to comsur}
    If an equation $\varepsilon$ fails in $\mathsf{DLPG}$, then it also fails in some compatible surjection for $\varepsilon$. 
\end{theorem}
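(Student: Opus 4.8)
The plan is to produce the compatible surjection directly from a functional witness of the failure. First I would invoke Corollary~\ref{c: failure in F(O) integral}: if $\e$, written in intentional form as $1 \le w_1 \jn \cdots \jn w_k$, fails in $\mathsf{DLPG}$, then it fails in $\m F(\m \Omega)$ for some integral chain $\m \Omega$, so there is an intentional homomorphism $\psi \colon \m{Ti} \to \m F(\m \Omega)$ and a point $p \in \Omega$ with $\psi(1)(p) > \psi(w_1)(p), \ldots, \psi(w_k)(p)$. Since $\m \Omega$ is integral, successor and predecessor are total, so $\psi$ extends to a homomorphism $\psi^\pm \colon \m{Ti}^\pm \to \m F(\m \Omega)^\pm$ of the expanded language; this is exactly the reason to pass to $\pm$ at the outset. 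I then evaluate the template terms at $p$: define $\theta \colon \Delta_\e \to \Omega$ by $\theta(u) = \psi^\pm(u)(p)$. Its image $\theta[\Delta_\e]$ is a finite subchain of $\m \Omega$; letting $\rho$ be the unique order isomorphism of that subchain onto $\mathbb{N}_q$ with $q = |\theta[\Delta_\e]| \le |\Delta_\e|$, I set $\varphi := \rho \circ \theta$, which is onto. The claim is that $\varphi$ is a compatible surjection for $\e$ in which $\e$ fails.

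The failure and the first two conditions are routine transfers of order facts along $\rho$. Since $\psi(1) = \mathrm{id}$ we get $\theta(1) = p$, and $w_j \in FS_\e \sbs \Delta_\e$, so the hypothesis gives $\theta(w_j) < \theta(1)$ and hence $\varphi(w_j) < \varphi(1)$; this is the failure of $\e$ in $\varphi$. For (i), the homomorphism property yields $\theta(x_i u) = f_i(\theta(u))$ where $f_i := \psi(x_i)$; as $f_i$ is order-preserving, $\theta(u) \le \theta(u')$ forces $\theta(x_i u) \le \theta(x_i u')$, so $g_i$ is a well-defined order-preserving partial function. For (ii), $\theta(+v) = \theta(v)+1$ and $\theta(-v) = \theta(v)-1$ are a cover of, and covered by, $\theta(v)$ in $\m \Omega$; since nothing lies strictly between covering points, their $\rho$-images are consecutive in $\mathbb{N}_q$, so $\diagcov \sbs {\prec}$.

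The hard part will be condition (iii), which asserts that the iterated inverses $g_i^{[m]}$ computed inside the finite diagram agree with the values prescribed by $\theta$. To handle it I would transport the marking back along $\rho$: let $\m \Delta := (\theta[\Delta_\e], \le, \diagcov')$, where $\diagcov'$ is the $\rho$-preimage of $\diagcov$; by (ii) this is a sub c-chain of $(\m \Omega, \prec)$, and $\rho$ is an isomorphism of c-chains, hence commutes with $^{[\ell]}$ and $^{[r]}$ (these being defined purely from order and marking). Because $\psi^\pm$ is a homomorphism in the expanded language, $\theta$ carries the syntactic set $\Delta^u_{x_i,m}$ exactly onto $\Delta^{\theta(u)}_{f_i,m}$ and $\Lambda^u_{x_i,m}$ onto $\Lambda^{\theta(u)}_{f_i,m}$, and on the latter the conjugate $\rho^{-1} g_i \rho$ agrees with $f_i$ by the definition of $g_i$. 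The key combinatorial point is that whenever $u, x_i^{(m)} u \in \Delta_\e$ one has $\Delta^u_{x_i,m} \sbs \Delta_\e$: the word $x_i^{(m)} u$ begins with the letter $x_i^{(m)}$, so it occurs in some $\Delta^v_{x_i,m'}$ with $(i,m',v)\in S_\e$ with that leading factor, which forces $0 \le m \le m'$ and identifies $u$ with the corresponding tail, whence every term of $\Delta^u_{x_i,m}$ reappears as a term of $\Delta^v_{x_i,m'} \sbs \Delta_\e$. Consequently $\Delta^{\theta(u)}_{f_i,m} \sbs \theta[\Delta_\e]$, and Lemma~\ref{l: Delta subf,m,a}, applied to $f_i$, the point $\theta(u)$, and the c-chain $\m \Delta$, gives $(\rho^{-1} g_i^{[m]} \rho)(\theta(u)) = f_i^{(m)}(\theta(u)) = \theta(x_i^{(m)} u)$; applying $\rho$ yields $g_i^{[m]}(\varphi(u)) = \varphi(x_i^{(m)} u)$, which is (iii).

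The principal obstacle, then, is ensuring that $\theta[\Delta_\e]$ retains enough points for each computation of $g_i^{[m]}$ to reproduce $f_i^{(m)}$ rather than stall or return a wrong value; this is precisely what the engineering of $\Delta_\e$ (through the families $\Delta^v_{x_i,m}$ indexed by $S_\e$) and Lemma~\ref{l: Delta subf,m,a} are designed to guarantee, with the induction of that lemma carried out comfortably in the expanded language. Everything else—surjectivity, the failure inequality, and conditions (i) and (ii)—follows by transporting order-theoretic facts across the order isomorphism $\rho$.
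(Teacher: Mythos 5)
Your argument is correct and is essentially the paper's own proof: both pass via Corollary~\ref{c: failure in F(O) integral} to a failure in $\m F(\m \Omega)$ for an integral chain $\m \Omega$, define the surjection by evaluating $\psi^\pm(u)(p)$ on the template terms and composing with the unique order isomorphism onto $\mathbb{N}_q$, verify the failure and conditions (i)--(ii) by direct transfer, and establish condition (iii) by the same key observation (if $x_i^{(m)}u\in\Delta_\varepsilon$ then $u$ is a tail of some $(i,j,v)\in S_\varepsilon$ with $j\geq |m|$, so $\Lambda^u_{x_i,m}$ and $\Delta^u_{x_i,m}$ land inside $\Delta_\varepsilon$, where $g_i$ agrees with $f_i$) followed by an application of Lemma~\ref{l: Delta subf,m,a}. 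The only differences are presentational---you transport the marked covering relation along $\rho$ explicitly where the paper identifies $\psi_p[\Delta_\varepsilon]$ with $\mathbb{N}_q$ implicitly---so there is nothing to correct.
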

\begin{proof}
Let $\varepsilon=\varepsilon(x_1,\ldots,x_n)$ be an equation $1\leq w_{1}\vee\ldots\vee w_{k}$ in intentional form  that fails in  $\mathsf{DLPG}$.
By Corollary~\ref{c: failure in F(O) integral},  $\varepsilon$ fails in $\mathbf{F}(\mathbf{\Omega})$, for some integral chain $\mathbf{\Omega}$. Therefore, there exists a list $f=(f_1,\ldots,f_n)$ of elements of ${F}(\mathbf{\Omega})$ and $p\in\Omega$  such that $w_{1}^{\m{F(\Omega)}}(f)(p), \ldots,  w_{k}^{\m{F(\Omega)}}(f)(p)>p$; here $f_i=\psi(x_i)$ where $\psi:\m {Ti} \ra \m F(\m \Omega)$ is the homomorphism witnessing the failure of $\varepsilon$. We denote by $\psi^\pm:\m {Ti}^\pm \ra \m F(\m \Omega)^\pm$ the extension of $\psi$.
We will show that $\psi_p:\Delta_{\varepsilon}\rightarrow\psi_p[\Delta_{\varepsilon}]$ is a compatible surjection, where the order on $\psi_p[\Delta_{\varepsilon}]$ is inherited from $\m \Omega$ and 
$$\psi_p(u):=\psi^\pm(u)(p)=u^{\m{F(\Omega)^\pm}}(f)(p)$$ 
for $u \in \Delta_\varepsilon$; actually, the compatible surjection will be the composition of $\psi_p$ with the (unique) isomorphism of the chain $\psi_p[\Delta_{\varepsilon}]$ with the initial segment $\mathbb{N}_q$ of $\mathbb{Z}^+$, where $q=|\psi_p[\Delta_{\varepsilon}]|$. To simplify the notation, we write $u(f)(p)$ and  $u_{fp}$ for $u^{\m{F(\Omega)^\pm}}(f)(p)$.

Clearly, $\psi_p$ is onto. To show that $g_i$ is an order-preserving partial function, suppose $i\in \{1,\ldots,n\}$, $u,v,x_i u,x_i v\in\Delta_{\varepsilon}$ and $\psi_p(u)\leq \psi_p(v)$. Since $f_i$ is order preserving, $g_i(\psi_p(u))=\psi_p(x_i u)=(x_i u)(f)(p)=f_i(u(f)(p))=f_i(\psi_p(u))\leq f_i (\psi_p(v))=f_i(v(f)(p))=(x_i v)(f)(p)=\psi_p(x_i v)=g_i(\psi_p(v))$. In particular, $g_i$ is the restriction of $f_i$ to the domain of $g_i$.

If $a \diagcov b$, then $a=\psi_p(v)$ and $b=\psi_p(+v)$ for some $v, +v\in\Delta_{\varepsilon}$,  or $a=\psi_p(-v)$ and $b=\psi_p(v)$ for some $v, -v\in\Delta_{\varepsilon}$. In the first case 
$a=\psi_p(v)= \psi^\pm(v)(p)
\prec 
+\psi^\pm(v)(p)=\psi^\pm(+v)(p)
=\psi_p(+v)=b$ and in the second case, $a=\psi_p(-v)=
\psi^\pm(-v)(p)=-\psi^\pm(v)(p)
\prec 
\psi^\pm(v)(p)
=\psi_p(v)=b$. So ${\diagcov}\subseteq {\prec}$. 

Observe that for $i\in \{1,\ldots,n\}$, we have $\psi_p(u)\in Dom(g_i)$ 
iff $u,x_i u\in\Delta_{\varepsilon}$ iff 
there exist $m\geq 0$, $v$ and $\sigma_1,\ldots,\sigma_m\in\{-1,0\}$ such that $u=\sigma_1 x_i^{(1)}\ldots\sigma_m x_i^{(m)}v$ and $(i,m,v)\in S$, or if 
there exist $m<0$, $v$ and $\sigma_1,\ldots,\sigma_{|m|}\in\{0,1\}$ such that $u=\sigma_1 x_i^{(-1)}\ldots\sigma_{|m|} x_i^{(m)}v$ and $ (i,|m|,v)\in S$. 
Given that $\psi_p (\sigma_1 x_i^{(1)}\ldots\sigma_m x_i^{(m)}v)=\psi^\pm(\sigma_1 x_i^{(1)}\ldots\sigma_m x_i^{(m)}v)(p)= 
(\psi^\pm(\sigma_1 x_i^{(1)}\ldots\sigma_m x_i^{(m)}) \circ \psi^\pm(v))(p)=\sigma_1 f_i^{(1)}\ldots$ $\sigma_m f_i^{(m)} \psi_p (v)=\sigma_1 f_i^{(1)}\ldots\sigma_m f_i^{(m)}v(f)(p)$ for $m \geq 0$, and given that we have 
$\psi_p (\sigma_1 x_i^{(-1)}\ldots\sigma_{|m|} x_i^{(m)}v)=\sigma_1 f_i^{(-1)}\ldots\sigma_{|m|} f_i^{(m)}v(f)(p)$ for $m<0$, we get that $\psi_p(u)\in Dom(g_i)$ iff 
$\psi_p (u)\in \Lambda_{f_i,m}^{v_{fp}}$ for some $(i,m,v)\in S$. So $Dom(g_i)=\bigcup\{\Lambda_{f_{i},m}^{v_{fp}}: (i,m,v)\in S\}$.

We will show that  $\psi_p(x_i^{(m)}u)=g_i^{[m]}(\psi_p(u))$ for all $i \in \{1,\ldots,n\}$, $m\in\mathbb{Z}$ and $u,x_i^{(m)}u\in\Delta_{\varepsilon}$; we will provide the details for $m>0$, as the other case is similar.  
Observe first that if $x_i^{(m)}u\in\Delta_{\varepsilon}$, then $x_i^{(m)}u\in\Delta_{x_i,j}^{v}$ for some $(i,j,v)\in S$ with $j\geq m$, so $u=\sigma_{m+1}x_i^{(m+1)}\ldots\sigma_j x_i^{(j)}v$.
Consequently, for all $\sigma_1,\ldots,\sigma_{m}\in\{-1,0\}$, we have $\sigma_{1}f_i^{(1)}\ldots\sigma_{m}f_i^{(m)}(u(f)(p))=\sigma_{1}f_i^{(1)}\ldots\sigma_{j}f_i^{(j)}(v(f)(p))$; hence $\Lambda_{f_i,m}^{u_{fp}}\subseteq\Lambda_{f_i,j}^{v_{fp}}\subseteq Dom(g_i)$. Therefore, $g_{i}$ coincides with $f_i$ on $\Lambda_{f_{i},m}^{u_{fp}}$ and, by Lemma \ref{l: Delta subf,m,a}, we get $g_{i}^{[m]}( u(f)(p))=f_{i}^{(m)}(u(f)(p))$. So we have $\psi_p(x_i^{(m)}u)=(x_i^{(m)}u)(f)(p)=f_{i}^{(m)}(u(f)(p))=g_{i}^{[m]}( u(f)(p))=g_{i}^{[m]}(\psi_p(u))$. 

Therefore $\psi_p$ is a compatible surjection for $\varepsilon$. Finally, the  equation fails in $\psi_p$, since for all $j \in\{1,\ldots,k\}$, we have $\psi_p(w_j)=w_j(f)(p)<p=\psi_p(1)$. 
 \end{proof}
 
 \begin{theorem}\label{t: comsur to diagram} 
 If an equation fails in a compatible surjection, then it fails in a diagram based on a chain of size up to $|\Delta_\varepsilon|$.

 \end{theorem}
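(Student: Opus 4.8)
The plan is to take the diagram to be exactly the object $\m D_{\varepsilon,\varphi} = (\mathbb{N}_q, {\leq}, {\diagcov}, g_1, \ldots, g_n)$ attached to the compatible surjection $\varphi \colon \Delta_\varepsilon \to \mathbb{N}_q$, which conditions (i) and (ii) already guarantee is a diagram and whose underlying chain $\mathbb{N}_q$ has size $q \leq |\Delta_\varepsilon|$, as required. To witness the failure in $\m D_{\varepsilon,\varphi}$ I first produce the intentional homomorphism demanded by the definition of failure in a diagram. Since $\m {Pf}(\m D_{\varepsilon,\varphi})$ is a total algebra in the intentional language $(\circ, {}^{[\ell]}, {}^{[r]}, i)$ and $\m {Ti}$ is the free intentional algebra on $x_1, \ldots, x_n$, there is a unique homomorphism $\Phi \colon \m {Ti} \to \m {Pf}(\m D_{\varepsilon,\varphi})$ with $\Phi(x_i) = g_i$; because $\Phi$ preserves the two inversions, it satisfies $\Phi(x_i^{(m)}) = g_i^{[m]}$, where $g_i^{[m]}$ is the iterated inverse computed inside the diagram.

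The heart of the argument is the bridge lemma: $\Phi(v)(\varphi(1)) = \varphi(v)$ for every $v \in FS_\varepsilon$. I would prove this by induction on the length of $v$. For $v = 1$ both sides equal $\varphi(1)$, since $\Phi(1)$ is the identity on $\mathbb{N}_q$. For the inductive step write $v = x_i^{(m)} v'$ with $v'$ shorter; here $v' \in FS_\varepsilon$ because $FS_\varepsilon$ is closed under taking final subwords, and $x_i^{(m)} v' = v \in FS_\varepsilon$ forces $(i,m,v') \in S_\varepsilon$, so both $v'$ and $x_i^{(m)} v'$ lie in $\Delta_\varepsilon$ and condition (iii) applies to the pair. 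Then
\[
\Phi(v)(\varphi(1)) = \bigl(g_i^{[m]} \circ \Phi(v')\bigr)(\varphi(1)) = g_i^{[m]}\bigl(\Phi(v')(\varphi(1))\bigr) = g_i^{[m]}(\varphi(v')) = \varphi(x_i^{(m)} v') = \varphi(v),
\]
using $\Phi(x_i^{(m)}) = g_i^{[m]}$, the inductive hypothesis, and condition (iii) in turn. In particular this computation shows $\varphi(1) \in Dom(\Phi(v))$, so the displayed values are genuinely defined. Conceptually this is just the statement that $\Phi$ and the partial homomorphism $\varphi_\varepsilon$ agree at the evaluation point $\varphi(1)$, but the induction above is self-contained and only appeals to (iii).

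With the bridge lemma in hand, set $p := \varphi(1)$. Each $w_j$ is a final subword of $\varepsilon$ (take the empty prefix), so $w_j \in FS_\varepsilon$ and hence $\Phi(w_j)(p) = \varphi(w_j)$, while $\Phi(1)(p) = \varphi(1) = p$. Because $\varepsilon$ fails in the compatible surjection $\varphi$ we have $\varphi(w_j) < \varphi(1)$ for all $j$, whence $\Phi(w_j)(p) < \Phi(1)(p)$; this is exactly the condition for $1 \leq w_1 \jn \cdots \jn w_k$ to fail in the diagram $\m D_{\varepsilon,\varphi}$, whose chain has size at most $|\Delta_\varepsilon|$. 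I expect the only delicate point to be the inductive step of the bridge lemma: one must check that every intermediate final subword stays inside $\Delta_\varepsilon$ so that condition (iii) is applicable at each stage, and that the diagram-internal iterated inverse $g_i^{[m]}$ really is the homomorphic image $\Phi(x_i^{(m)})$ — i.e. that the syntactic powers $x_i^{(m)}$ match the operations ${}^{[\ell]}, {}^{[r]}$ of $\m {Pf}(\m D_{\varepsilon,\varphi})$. Everything else is bookkeeping.
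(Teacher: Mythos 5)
Your proposal is correct and essentially identical to the paper's own proof: both use the diagram $\m D_{\varepsilon,\varphi}$ directly, extend $x_i \mapsto g_i$ to an intentional homomorphism into $\m{Pf}$ of the diagram, and prove by induction on final subwords (via condition (iii), which also supplies definedness) that the homomorphism evaluated at $\varphi(1)$ agrees with $\varphi$ on all of $FS_\varepsilon$, from which the failure at $p = \varphi(1)$ follows. Your explicit remarks on $\Phi(x_i^{(m)}) = g_i^{[m]}$ and on membership in $\Delta_\varepsilon$ are just careful spell-outs of steps the paper leaves implicit.
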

 \begin{proof}
     Suppose an equation $\varepsilon$ in intentional form $1\leq w_1\vee\ldots\vee w_k$ with variables $x_1,\ldots ,x_n$ fails in a compatible surjection $\varphi:\Delta_{\varepsilon}\rightarrow\mathbb{N}_q$; i.e., $\varphi(w_1), \ldots, \varphi(w_k)<\varphi(1)$. Also, recall the definition of the diagram  $\m D_{\varepsilon, \varphi}:= (\mathbb{N}_q, {\leq}, {\diagcov}, g_1, \ldots, g_k)$ and set $\m \Delta:=(\mathbb{N}_q, {\leq}, {\diagcov})$  and $1_\varphi:=\varphi (1)\in \mathbb{N}_q$.
      We denote by $\hat{\varphi}: \m {Ti}\rightarrow \m{Pf( \Delta)}$
     the intentional homomorphism extending the assignment $\hat{\varphi}(x_i)=g_i$ for all $1 \leq i\leq n$. We will show that $\varepsilon$ fails on  $\m D_{\varepsilon, \varphi}$ by verifying that for all $j\in \{1,\ldots,k\}$, we have
     $\hat{\varphi}(w_j)(1_\varphi)=\varphi(w_j)$, which then implies     
     $\hat{\varphi}(w_j)(1_\varphi)=\varphi(w_j)<\varphi(1)=\hat{\varphi}(1)(1_\varphi)$.
     More generally, we will show that for all $u\in FS$, $\hat{\varphi}(u)(1_{\varphi})=\varphi(u)$, by using induction on $FS$. 
     
     If $u=1$, then $\hat{\varphi}(1)(1_{\varphi})=id_\Delta(1_{\varphi}) =1_{\varphi}=\varphi(1)$.
     If $\hat{\varphi}(u)(1_{\varphi})=\varphi(u)$, then using the fact that $\hat{\varphi}$ is an intentional homomorphism extending the assignment $\hat{\varphi}(x_i)=g_i$ for all $1 \leq i\leq n$, and that $\varphi$ is a compatible surjection, we get that
     $\hat{\varphi}(x_i^{(m)}u)(1_{\varphi})=
     g_i^{[m]}\hat{\varphi}(u)(1_{\varphi})=g_i^{[m]}(\varphi(u))=\varphi(x_i^{[m]}u)$,
      for all $i\in\{1,\ldots,n\}$ and $m\in\mathbb{Z}$ with $x_i^{(m)}u\in FS$. 
 \end{proof}

Revisiting Example~\ref{e: diagram}, we give the final diagram for the equation $1 \leq x^{\ell}x$ and its failure $f^{\ell}f(7)=4<7$ exhibited by the function $f$ of Figure \ref{f:building D}. 
Given that
\begin{equation*}
\begin{aligned}[t]
f^{\ell}f(7)&=4\\
 -f^{\ell}f(7)&=3 \\
     id(7) & = 7 
\end{aligned}
\qquad
\begin{aligned}[t]
ff^{\ell}f(7)&=5\\
f{-}f^{\ell}f(7)&=2\\
f(7)&=5
\end{aligned}
\end{equation*}
we have 
\begin{align*}
\Lambda_{f,0}^{7}=\{7\}&\text{ and }\Delta_{f,0}^{7}=\{ 5,7\}\\
\Lambda_{f,1}^{f(7)}=\{3,4\}&\text{ and }\Delta_{f,1}^{f(7)}=\{ 2,3,4,5\}
\end{align*}
Therefore, the resulting diagram is $\mathbf{D}=( \{2,3,4,5,7\},\leq,\diagcov,g)$ where $\leq$ is inherited from $\mathbb{Z}$, ${\diagcov} = \{(3,4)\}$  and $g=\{ (3,2), (4,5), (7,5) \}$. The equation $1\leq x^\ell x$ fails in $\mathbf{D}$ since $g^{\ell}g(7)=4<7$.

\section{From a diagram to  \mbox{$\mathbf{F(\mathbb{Z})}$}}\label{s: F(Z)}

In this section, we show that if an equation fails in some diagram, then it also fails in $\m F (\mathbb{Z})$.

\medskip

We start by revisiting the equation we considered in the last section, but with a different failure. Note that $1\leq x^{\ell}x$ fails in the  diagram $(\{3,4,5,6,7,8,9\},\leq,\prec,g)$ given in Figure \ref{f:building f from D}, since $g^{[\ell]}g(7)=5<7$. We would like to build a function $f$ on $\mathbf{F(\mathbb{Z})}$ that will exhibit the failure of the equation. This can be done by extending $g$ to a function $f$ and also making sure that $g^{[\ell]}(7)=f^\ell(7)$.
More generally, we want the iterated inverses of $f$ to agree with those of $g$, at least on the points involved in the failure.
We define the function $f:\mathbb{Z}\rightarrow\mathbb{Z}$ by:

  \[
f(n)=\begin{cases}
n & \text{if }n\notin\{3,4,5,6,7,8,9\} \\
9 & \text{if } n\in \{8,9\}\\
6 & \text{if } n \in \{5,6,7\}\\
3 & \text{if } n \in \{4,3\}
\end{cases}
\]
The function $f$ and its dual residual $f^{\ell}$ are displayed in Figure \ref{f:building f from D}. If we evaluate the equation on $f$ and at the element $7$, we obtain $f^{\ell}f(7)=5<7$ and therefore $\mathbf{F(\mathbb{Z})}\nvDash 1\leq x^{\ell}x$.
Observe that this is only one of many possible ways to extend $g$ to a function $f$.

  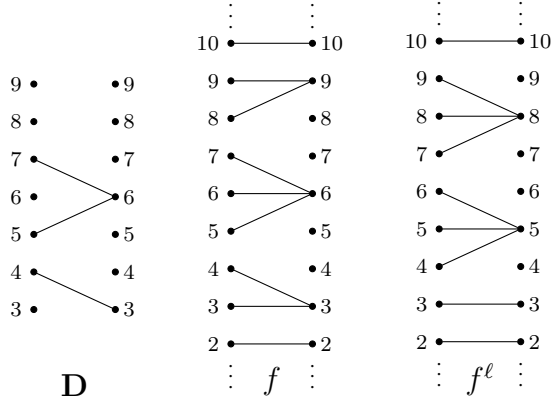
\begin{figure}[ht]\def\eq{=}.  
  \begin{center}
{\scriptsize
\begin{tikzpicture}
[scale=0.5]
\node[fill,draw,circle,scale=0.3,left](5) at (0,5){};
\node[left](5.1) at (-0.2,5){$9$};
\node[fill,draw,circle,scale=0.3,right](5r) at (2,5){};
\node[right](5.1) at (2.1,5){$9$};
\node[fill,draw,circle,scale=0.3,left](4) at (0,4){};
\node[left](4.1) at (-0.2,4){$8$};
\node[fill,draw,circle,scale=0.3,right](4r) at (2,4){};
\node[right](4.1) at (2.1,4){$8$};
\node[fill,draw,circle,scale=0.3,left](3) at (0,3){};
\node[left](3.1) at (-0.2,3){$7$};
\node[fill,draw,circle,scale=0.3,right](3r) at (2,3){};
\node[right](3.1) at (2.1,3){$7$};
\node[fill,draw,circle,scale=0.3,left](2) at (0,2){};
\node[left](2.1) at (-0.2,2){$6$};
\node[fill,draw,circle,scale=0.3,right](2r) at (2,2){};
\node[right](2.1) at (2.1,2){$6$};
\node[fill,draw,circle,scale=0.3,left](1) at (0,1){};
\node[left](1.1) at (-0.2,1){$5$};
\node[fill,draw,circle,scale=0.3,right](1r) at (2,1){};
\node[right](2.1) at (2.1,1){$5$};
\node[fill,draw,circle,scale=0.3,left](0) at (0,0){};
\node[left](0.1) at (-0.2,0){$4$};
\node[fill,draw,circle,scale=0.3,right](0r) at (2,0){};
\node[right](0.1) at (2.1,0){$4$};
\node[fill,draw,circle,scale=0.3,left](-1) at (0,-1){};
\node[left](0.1) at (-0.2,-1){$3$};
\node[fill,draw,circle,scale=0.3,right](-1r) at (2,-1){};
\node[right](0.1) at (2.1,-1){$3$};
\node at (1,-3){\normalsize$\mathbf{D}$};
\draw[-](3)--(2r);
\draw[-](1)--(2r);
\draw[-](0)--(-1r);
\end{tikzpicture}
\qquad
\begin{tikzpicture}
[scale=0.5]

\node[fill,draw,circle,scale=0.3,left](6) at (0,6){};
\node[left](6.1) at (-0.2,6){$10$};
\node[fill,draw,circle,scale=0.3,right](6r) at (2,6){};
\node[right](6.1) at (2.1,6){$10$};
\node[fill,draw,circle,scale=0.3,left](5) at (0,5){};
\node[left](5.1) at (-0.2,5){$9$};
\node[fill,draw,circle,scale=0.3,right](5r) at (2,5){};
\node[right](5.1) at (2.1,5){$9$};
\node[fill,draw,circle,scale=0.3,left](4) at (0,4){};
\node[left](4.1) at (-0.2,4){$8$};
\node[fill,draw,circle,scale=0.3,right](4r) at (2,4){};
\node[right](4.1) at (2.1,4){$8$};
\node[fill,draw,circle,scale=0.3,left](3) at (0,3){};
\node[left](3.1) at (-0.2,3){$7$};
\node[fill,draw,circle,scale=0.3,right](3r) at (2,3){};
\node[right](3.1) at (2.1,3){$7$};
\node[fill,draw,circle,scale=0.3,left](2) at (0,2){};
\node[left](2.1) at (-0.2,2){$6$};
\node[fill,draw,circle,scale=0.3,right](2r) at (2,2){};
\node[right](2.1) at (2.1,2){$6$};
\node[fill,draw,circle,scale=0.3,left](1) at (0,1){};
\node[left](1.1) at (-0.2,1){$5$};
\node[fill,draw,circle,scale=0.3,right](1r) at (2,1){};
\node[right](2.1) at (2.1,1){$5$};
\node[fill,draw,circle,scale=0.3,left](0) at (0,0){};
\node[left](0.1) at (-0.2,0){$4$};
\node[fill,draw,circle,scale=0.3,right](0r) at (2,0){};
\node[right](0.1) at (2.1,0){$4$};
\node[fill,draw,circle,scale=0.3,left](-1) at (0,-1){};
\node[left](-1.1) at (-0.2,-1){$3$};
\node[fill,draw,circle,scale=0.3,right](-1r) at (2,-1){};
\node[right](-1.1) at (2.1,-1){$3$};
\node[fill,draw,circle,scale=0.3,left](-2) at (0,-2){};
\node[left](-1.1) at (-0.2,-2){$2$};
\node[fill,draw,circle,scale=0.3,right](-2r) at (2,-2){};
\node[right](-1.1) at (2.1,-2){$2$};

\node at (-2)[below=-1pt]{$\vdots$};
\node at (-2r)[below=-1pt]{$\vdots$};
\node at (6)[above=3pt]{$\vdots$};
\node at (6r)[above=3pt]{$\vdots$};
\node at (1,-3){\normalsize$f$};
\draw[-](6)--(6r);
\draw[-](5)--(5r);
\draw[-](4)--(5r);
\draw[-](3)--(2r);
\draw[-](2)--(2r);
\draw[-](1)--(2r);
\draw[-](0)--(-1r);
\draw[-](-1)--(-1r);
\draw[-](-2)--(-2r);
\end{tikzpicture}
\qquad
\begin{tikzpicture}
[scale=0.5]

\node[fill,draw,circle,scale=0.3,left](6) at (0,6){};
\node[left](6.1) at (-0.2,6){$10$};
\node[fill,draw,circle,scale=0.3,right](6r) at (2,6){};
\node[right](6.1) at (2.1,6){$10$};
\node[fill,draw,circle,scale=0.3,left](5) at (0,5){};
\node[left](5.1) at (-0.2,5){$9$};
\node[fill,draw,circle,scale=0.3,right](5r) at (2,5){};
\node[right](5.1) at (2.1,5){$9$};
\node[fill,draw,circle,scale=0.3,left](4) at (0,4){};
\node[left](4.1) at (-0.2,4){$8$};
\node[fill,draw,circle,scale=0.3,right](4r) at (2,4){};
\node[right](4.1) at (2.1,4){$8$};
\node[fill,draw,circle,scale=0.3,left](3) at (0,3){};
\node[left](3.1) at (-0.2,3){$7$};
\node[fill,draw,circle,scale=0.3,right](3r) at (2,3){};
\node[right](3.1) at (2.1,3){$7$};
\node[fill,draw,circle,scale=0.3,left](2) at (0,2){};
\node[left](2.1) at (-0.2,2){$6$};
\node[fill,draw,circle,scale=0.3,right](2r) at (2,2){};
\node[right](2.1) at (2.1,2){$6$};
\node[fill,draw,circle,scale=0.3,left](1) at (0,1){};
\node[left](1.1) at (-0.2,1){$5$};
\node[fill,draw,circle,scale=0.3,right](1r) at (2,1){};
\node[right](2.1) at (2.1,1){$5$};
\node[fill,draw,circle,scale=0.3,left](0) at (0,0){};
\node[left](0.1) at (-0.2,0){$4$};
\node[fill,draw,circle,scale=0.3,right](0r) at (2,0){};
\node[right](0.1) at (2.1,0){$4$};
\node[fill,draw,circle,scale=0.3,left](-1) at (0,-1){};
\node[left](-1.1) at (-0.2,-1){$3$};
\node[fill,draw,circle,scale=0.3,right](-1r) at (2,-1){};
\node[right](-1.1) at (2.1,-1){$3$};
\node[fill,draw,circle,scale=0.3,left](-2) at (0,-2){};
\node[left](-1.1) at (-0.2,-2){$2$};
\node[fill,draw,circle,scale=0.3,right](-2r) at (2,-2){};
\node[right](-1.1) at (2.1,-2){$2$};

\node at (-2)[below=-1pt]{$\vdots$};
\node at (-2r)[below=-1pt]{$\vdots$};
\node at (6)[above=3pt]{$\vdots$};
\node at (6r)[above=3pt]{$\vdots$};
\node at (1,-3){\normalsize$f^{\ell}$};
\draw[-](6)--(6r);
\draw[-](5)--(4r);
\draw[-](4)--(4r);
\draw[-](3)--(4r);
\draw[-](2)--(1r);
\draw[-](1)--(1r);
\draw[-](0)--(1r);
\draw[-](-1)--(-1r);
\draw[-](-2)--(-2r);
\end{tikzpicture}
}

\caption{Building $f$ from $D$}\label{f:building f from D}
\end{center}
\end{figure}  

Let $F_{\textup{fs}}(\mathbb{Z})$ be the subset of $F(\mathbb{Z})$ consisting of the functions of finite support (they are equal to the identity function, except for finitely many places). It is easy to see that this defines a subalgebra of  $\m F(\mathbb{Z})$, which we denote by $\m F_{\textup{fs}}(\mathbb{Z})$.

\begin{theorem}\label{l:from D to F(Z)}
Every equation in the language of $\ell$-pregroups that fails in a diagram also fails in $\m F_{\textup{fs}}(\mathbb{Z})$.
\end{theorem}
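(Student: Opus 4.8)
The plan is to realize the abstract diagram concretely on $\mathbb{Z}$ by building, for each diagram function, a finite-support order-preserving map on $\mathbb{Z}$ whose iterated residuals reproduce those of the diagram at the points that matter. Suppose $\varepsilon$ is $1 \le w_1 \vee \dots \vee w_k$ and fails in a diagram $(\m\Delta, f_1,\dots,f_n)$, witnessed by an intentional homomorphism $\varphi:\m{Ti}\to\m{Pf}(\m\Delta)$ and a point $p\in\Delta$ with $p=\varphi(1)(p) > \varphi(w_j)(p)$ for every $j$ (each $\varphi(w_j)(p)$ being defined). First I would embed the finite c-chain $\m\Delta=(\Delta,\le,\diagcov)$ into $(\mathbb{Z},\prec)$ by an order-embedding $\iota$ that sends each marked cover to a genuine cover of $\mathbb{Z}$, i.e. $a\diagcov b$ implies $\iota(a)+1=\iota(b)$, while leaving gaps at the unmarked covers; since $\Delta$ is finite this is clearly possible, and it makes $\iota$ a c-chain embedding of $\m\Delta$ onto a sub c-chain of $(\mathbb{Z},\prec)$.

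Next, for each $i$ I would extend the partial map $\iota f_i\iota^{-1}$ on $\iota[\Delta]$ to a total map $\hat f_i$ on $\mathbb{Z}$ as follows: keep the prescribed values on $\iota[Dom(f_i)]$, interpolate monotonically on the finitely many remaining points of the bounded region, and set $\hat f_i$ equal to the identity outside a large enough interval (ramping up or down to meet it). This $\hat f_i$ is order-preserving with interval preimages and finitely many non-identity values, so by Corollary~\ref{c: intF(Omega)} (together with Lemma~\ref{l:1&2 in integral}) it lies in $F(\mathbb{Z})$, and by construction in $F_{\textup{fs}}(\mathbb{Z})$. Taking $\hat\varphi:\m{Ti}\to\m F_{\textup{fs}}(\mathbb{Z})$ to be the intentional homomorphism with $\hat\varphi(x_i)=\hat f_i$, the goal reduces to showing $\hat\varphi(w_j)(\iota(p))=\iota(\varphi(w_j)(p))$ for each $j$, after which order-preservation of $\iota$ turns the diagram failure into a failure at $\iota(p)$ in $\m F_{\textup{fs}}(\mathbb{Z})$.

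The heart of the argument is a \emph{residual-matching} lemma: for the points $a\in\Delta$ and the orders $m$ that actually occur while evaluating the $w_j$, one has $\hat f_i^{(m)}(\iota(a))=\iota(f_i^{[m]}(a))$. I would prove this by induction on $|m|$, the base case $m=0$ being that $\hat f_i$ extends $f_i$. For the single step, if $f_i^{[\ell]}(a)=b$ in the diagram then $b\in Dom(f_i)$ has a marked predecessor $a'\diagcov b$ with $f_i(a')<a\le f_i(b)$; since $\iota(a')=\iota(b)-1$ is the true $\mathbb{Z}$-predecessor of $\iota(b)$, order-preservation of $\hat f_i$ gives $\hat f_i(z)<\iota(a)$ for all $z\le\iota(a')$ while $\hat f_i(\iota(b))\ge\iota(a)$, so $\hat f_i^{\ell}(\iota(a))=\min\{z:\iota(a)\le\hat f_i(z)\}=\iota(b)$; the case of $f_i^{[r]}$ is dual. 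This is exactly the computation underlying Lemma~\ref{l: Delta subf,m,a}, now read in the opposite direction, and it shows that whenever the diagram's residual is defined the $\mathbb{Z}$-residual agrees with it, so the relevant points of $\Delta^{\iota(a)}_{\hat f_i,m}$ stay inside $\iota[\Delta]$ and the induction closes. With the lemma in hand, an induction on final subwords $u\in FS_\varepsilon$ (as in the proof of Theorem~\ref{t: comsur to diagram}) yields $\hat\varphi(u)(\iota(p))=\iota(\varphi(u)(p))$, and in particular the required identities for the $w_j$.

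I expect the main obstacle to be the residual-matching lemma and, within it, reconciling the two covering structures: the diagram computes residuals using only the marked relation $\diagcov$, whereas in $\m F_{\textup{fs}}(\mathbb{Z})$ the residual $\hat f_i^{\ell}$ is forced to use the full covering relation of $\mathbb{Z}$. The embedding is designed to make exactly the marked covers into true $\mathbb{Z}$-covers, which is what makes the two computations coincide at the relevant points; the remaining delicate tasks are checking that the monotone interpolation and the ramp to the identity neither create spurious preimages that would lower a residual nor break finite support, and that the inductively generated points never require evaluating $\hat f_i$ where $f_i$ is undefined.
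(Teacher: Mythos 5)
Your proposal is correct and takes essentially the same route as the paper: the paper normalizes the diagram chain to $\mathbb{N}_q$ (in place of your embedding $\iota$ with gaps at unmarked covers), extends each $g_i$ to a finite-support order-preserving map on $\mathbb{Z}$ by the explicit formula $f_i(a)=g_i(\bigwedge J_a)$ (your monotone interpolation plus ramp to the identity), checks membership in $\m F_{\textup{fs}}(\mathbb{Z})$ via Lemma~\ref{l: bounded preimage}, and then proves exactly your residual-matching lemma --- by induction on $m$, that $f_i^{(m)}(a)=g_i^{[m]}(a)$ for \emph{all} $a\in Dom(g_i^{[m]})$ --- followed by your induction on final subwords. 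One small refinement: state the matching lemma over the whole domain of $g_i^{[m]}$ (as you in effect do when you say the $\mathbb{Z}$-residual agrees wherever the diagram's is defined), since the inductive step needs the hypothesis at the witness pair $a'\diagcov b$, not merely at points arising in the evaluation of the $w_j$.
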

\begin{proof}
Suppose $\varepsilon$ is an equation in intentional form $1\leq w_{1}\vee\ldots\vee w_{k}$ 
over variables $x_1,\ldots,x_n$ and that
$\mathbf{D}=({\mathbb{N}}_{q},\leq_{\Delta},\diagcov,g_{1},\ldots,g_{n})$
is a finite diagram in which the equation fails (observe that any finite diagram can give rise to a diagram of this form for an appropriated $q\in\mathbb{Z}^+$).
For each $i=1,\ldots ,n$ consider the map $f_{i}:\mathbb{Z}\rightarrow\mathbb{Z}$
where for all $a\in\mathbb{Z}$, if $J_a=\{b\in Dom(g_i) \cup\{q\}:a\leq b\}$:
\[
f_{i}(a)=\begin{cases}
a & \text{if }a\notin{\mathbb{N}}_{q}  \\
g_{i}(\bigwedge J_a) & \text{otherwise}
\end{cases}
\]
 To check that $f_{i}$ is order-preserving, first recall that $g_{i}$ is an order-preserving partial function and suppose that
$a_{1},a_{2}\in\mathbb{Z}$ with $a_{1}\leq a_{2}$.  If $a_{1},a_{2}\notin{\mathbb{N}}_{q}$, then $f_{i}(a_{1})=a_{1}\leq a_{2}=f_{i}(a_{2})$.
If $a_{1}\in{\mathbb{N}}_{q}  $ and $a_{2}\notin{\mathbb{N}}_{q} $, then
$f_{i}(a_{1})\leq q< a_{2}=f_{i}(a_{2})$.
Similarly, if $a_{1}\notin{\mathbb{N}}_{q}  $ and $a_{2}\in{\mathbb{N}}_{q}  $,
$f_{i}(a_{1})=a_{1} < 1 \leq f_{i}(a_{2})$.
Finally, if $a_{1},a_{2}\in{\mathbb{N}}_{q}$, then $J_{a_{2}} \subseteq J_{a_{1}}$, so by the order preservation of $g_i$ we get $f_{i}(a_{1})=g_{i}(\bigwedge J_{a_{1}})\leq g_{i}(\bigwedge J_{a_{2}})=f_{i}(a_{2})$.

By Lemma \ref{l:1&2 in integral}, to show that $f_{i}\in F(\mathbb{Z})$ it suffices to show that  $f_i$ is residuated and dually residuated.  Given that $f_i$ is order-preserving, it suffices to check (1) and (2) of Lemma~\ref{l: bounded preimage}. Since all $f_i$'s have finite support, this will show that they are in $\m F_{\textup{fs}}(\mathbb{Z})$.

For (1), given $a\in f_i[\mathbb{Z}]$, we observe that $f_i^{-1}[ a]$ is a convex subset of $\mathbb{Z}$, since $b\leq c\leq d$ and $f_i(b)=f_i(d)=a$, imply  $a=f_i(b)\leq f_i(c) \leq f_i(d)=a$, by the order preservation of $f_i$. Also, since $f_i$ is the identity outside $\mathbb{N}_q$,  $f_i^{-1}[a]$ is finite; hence $f_i^{-1}[a]$ is a bounded closed interval of $\mathbb{Z}$.

 We now argue that for any function on $\mathbb{Z}$, condition (1) actually implies (2). For any $a \not \in f_i[\mathbb{Z}]$, since $f_i[\mathbb{Z}]$ is not empty, there is at least one element of $f_i[\mathbb{Z}]$ above $a$ or below $a$. Without loss of generality, we assume it is above, and we take $z$ to be the least element of $f_i[\mathbb{Z}]$ above $a$. If there is no element of $f_i[\mathbb{Z}]$ below $a$, then $f_i^{-1}[z]$ would be an initial segment of $\mathbb{Z}$, which contradicts (1). So there is a largest element of $f_i[\mathbb{Z}]$ below $a$, say $y$; so $a \in (y,z)$. By (1), we have $f_i^{-1}[y]=[b_y,c_y]$ and $f_i^{-1}[z]=[b_z,c_z]$ for some $b_y,c_y,b_z,c_z\in\mathbb{Z}$, and by order-preservation (and the fact that $y<z$) we get $c_y<b_z$; so, $a \in (f_i(c_y), f_i(b_z))$. Finally, we have $c_y \prec b_z$; otherwise there exists $c_y<d<b_z$, so $y<f_i(d)<z$, contradicting the choice of either $y$ or $z$. Hence, $f_i\in F_\textup{fs}(\mathbb{Z})$.

Now, since $\varepsilon$ fails in $\m D$, there exist $p\in {\mathbb{N}}_{q}$ and an intentional homomorphism $\varphi: \m {Ti} \rightarrow \m {Pf}(\m \Delta)$ satisfying $\varphi(1)(p) > \varphi( w_{1})(p), \ldots  , \varphi(w_{k})(p)$ and $\varphi(x_i)=g_i$ for all $1 \leq i\leq n$. In particular, if $v \in FS$, then $\varphi(v)(p)$ is defined; also, if ${x_i}^{(m)}v \in FS$, then $\varphi(v)(p)\in Dom(g_i^{[m]})$. We define a homomorphism $\psi: \m {Ti} \rightarrow \m F(\mathbb{Z})$ extending the assignment $\psi(x_i)=f_i$. We will prove by induction that $\varphi(v)(p)=\psi(v)(p)$, for all $v \in FS$; this will yield 
$\psi(1)(p) > \psi( w_{1})(p), \ldots  , \psi(w_{k})(p)$, i.e., that $\varepsilon$ fails in $\m F_{\textup{fs}}(\mathbb{Z})$.

We have $\varphi(1)(p)=id_\Delta(p)=p=id_\mathbb{Z}(p)= \psi(1)(p)$. If  $\varphi(v)(p)=\psi(v)(p)$ and ${x_i}^{(m)}v \in FS$ for some $m$ and $i$, then  
$\varphi({x_i}^{(m)}v)(p)=\varphi({x_i}^{(m)}) \varphi(v)(p)=
g_i^{[m]}(\psi(v)(p))=
f_i^{(m)}(\psi(v)(p))=
\psi({x_i}^{(m)}) \psi(v)(p)
=\psi({x_i}^{(m)}v)(p)$, where we used $g_i^{[m]}(\psi(v)(p))=
f_i^{(m)}(\psi(v)(p))$. More generally, we establish that, for all $m\in\mathbb{Z}$,  $a\in Dom(g_{i}^{[m]})$ implies $f_{i}^{(m)}(a)=g_{i}^{[m]}(a)$. We will  show this for $m\in\mathbb{N}$ by induction on $m$, as the proof for $m\in\mathbb{Z}^-$ is dual.
 
For $m=0$, if $a\in Dom(g_{i})$, then $f_{i}^{(0)}(a)=f_i(a)=g_i(a)=g_{i}^{[0]}(a)$.
Now suppose that the statement holds for $m = s$, i.e., that  $a\in Dom(g_{i}^{[s]})$  implies $f_{i}^{(s)}(a)=g_{i}^{[s]}(a)$; we will show that it also holds for $m=s+1$.
If $a\in Dom(g_{i}^{[s+1]})$, then there are $b,c\in Dom(g^{[s]}_{i})$, 
such that 
$b\diagcov c$
and $g_{i}^{[s]}(b)<a\leq g_{i}^{[s]}(c)$; hence $g_{i}^{[s+1]}(a)=c$. Using the induction hypothesis for $b$ and $c$, and that  $b\prec c$, we get $f_{i}^{(s)}(b)=g_{i}^{[s]}(b)<a\leq g_{i}^{[s]}(c)=f_{i}^{(s)}(c)$; so 
$f_{i}^{(s+1)}(a)=c=g_{i}^{[s+1]}(a)$, by Lemma~\ref{l: bounded preimage}.
\end{proof}

By combining Theorem~\ref{t: DLPG to comsur}, Theorem~\ref{t: comsur to diagram}, and Theorem~\ref{l:from D to F(Z)}, together with the fact that $\m F(\mathbb{Z})$ is a distributive lattice-ordered pregroup and $\m F_{\textup{fs}}(\mathbb{Z})$ is a subalgebra of $\mathbf{F(\mathbb{Z})}$, we obtain the following result.

\begin{corollary}\label{c: cycle}
An equation $\varepsilon$ holds in  $\mathsf{DLPG}$ iff it holds in all compatible surjections for $\varepsilon$  iff it holds in all finite diagrams for $\varepsilon$  iff it holds in $\m F_{\textup{fs}}(\mathbb{Z})$ iff it holds in $\mathbf{F(\mathbb{Z})}$.
\end{corollary}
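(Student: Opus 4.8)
The plan is to arrange the five assertions appearing in the corollary into a single directed cycle of implications, so that each arrow is either one of the three main theorems of the preceding sections (taken in contrapositive form) or an elementary closure fact. Writing (A), (B), (C), (D), (E) for the statements that $\varepsilon$ holds in $\mathsf{DLPG}$, in all compatible surjections for $\varepsilon$, in all finite diagrams for $\varepsilon$, in $\m F_{\textup{fs}}(\mathbb{Z})$, and in $\mathbf{F(\mathbb{Z})}$, respectively, I would establish the cycle
\[
\text{(A)} \Rightarrow \text{(E)} \Rightarrow \text{(D)} \Rightarrow \text{(C)} \Rightarrow \text{(B)} \Rightarrow \text{(A)}.
\]
Once all five conditions lie on one such cycle, they are pairwise equivalent, which is exactly the content of the corollary.

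First I would dispatch the two easy arrows. For (A) $\Rightarrow$ (E), recall that $\mathbf{F(\mathbb{Z})}$ is a distributive $\ell$-pregroup (by the construction of $\m F(\m \Omega)$ recalled from \cite{GJKO}), hence a member of the variety $\mathsf{DLPG}$; an equation valid throughout the variety is in particular valid in this one algebra. For (E) $\Rightarrow$ (D), I would use that $\m F_{\textup{fs}}(\mathbb{Z})$ is a subalgebra of $\mathbf{F(\mathbb{Z})}$, so that any equation holding in $\mathbf{F(\mathbb{Z})}$ is inherited by the subalgebra.

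The three remaining arrows are precisely the contrapositives of the main theorems. For (D) $\Rightarrow$ (C) I would invoke Theorem~\ref{l:from D to F(Z)}: were $\varepsilon$ to fail in some finite diagram, it would fail in $\m F_{\textup{fs}}(\mathbb{Z})$, contradicting (D). For (C) $\Rightarrow$ (B) I would invoke Theorem~\ref{t: comsur to diagram}: a failure of $\varepsilon$ in a compatible surjection yields a failure in a finite diagram (based on a chain of size at most $|\Delta_\varepsilon|$), so if $\varepsilon$ holds in all finite diagrams it must hold in every compatible surjection. Finally, for (B) $\Rightarrow$ (A) I would invoke Theorem~\ref{t: DLPG to comsur}: a failure in $\mathsf{DLPG}$ produces a failure in some compatible surjection for $\varepsilon$, so holding in all compatible surjections forces holding in the whole variety.

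I do not anticipate any genuine obstacle, since all the substantive work has already been packaged into the three theorems. The only care needed is bookkeeping: orienting each theorem in contrapositive form so the arrows compose into a closed cycle rather than a mere chain, and verifying the two closure facts, namely that $\mathbf{F(\mathbb{Z})}$ really is a member of $\mathsf{DLPG}$ and that $\m F_{\textup{fs}}(\mathbb{Z})$ (the finitely-supported functions) is closed under composition, the two inversions, and the lattice operations, so that it is indeed a subalgebra. With these in hand, the corollary follows immediately from the cycle.
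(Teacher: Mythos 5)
Your proposal is correct and matches the paper's own argument, which likewise chains the contrapositives of Theorem~\ref{t: DLPG to comsur}, Theorem~\ref{t: comsur to diagram}, and Theorem~\ref{l:from D to F(Z)} with the two closure facts (that $\mathbf{F(\mathbb{Z})}$ is a distributive $\ell$-pregroup and that $\m F_{\textup{fs}}(\mathbb{Z})$ is a subalgebra of it) to close the cycle of implications. Your write-up merely makes the orientation of each arrow explicit, which the paper leaves implicit; there is no substantive difference.
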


\begin{corollary}
  The variety $\mathsf{DLPG}$ is generated by $\m F_{\textup{fs}}(\mathbb{Z})$, hence also by $\mathbf{F(\mathbb{Z})}$.
\end{corollary}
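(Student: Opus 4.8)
The plan is to invoke the standard universal-algebra fact (Birkhoff's theorem) that a variety coincides with the class of all algebras satisfying its equational theory, so that two members of a common variety generate the same variety exactly when they validate the same equations. First I would record that $\m F_{\textup{fs}}(\mathbb{Z})$ belongs to $\mathsf{DLPG}$: it is a subalgebra of the distributive $\ell$-pregroup $\mathbf{F(\mathbb{Z})}$, and $\mathsf{DLPG}$ is closed under $\mathsf S$. Consequently the variety $\mathbb{V}(\m F_{\textup{fs}}(\mathbb{Z}))$ it generates is contained in $\mathsf{DLPG}$, since $\mathsf{DLPG}$, being a variety, is closed under $\mathsf H$, $\mathsf S$, and $\mathsf P$ and therefore contains $\mathbb{V}(\m A)=\mathsf{HSP}(\m A)$ for any of its members $\m A$.

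For the reverse inclusion I would appeal directly to Corollary~\ref{c: cycle}, whose first and fourth biconditionals state that an equation $\varepsilon$ holds in $\mathsf{DLPG}$ if and only if it holds in $\m F_{\textup{fs}}(\mathbb{Z})$. Thus $\m F_{\textup{fs}}(\mathbb{Z})$ and $\mathsf{DLPG}$ have identical equational theories. Since $\mathsf{DLPG}$ is a variety, it equals the class of all models of its own equational theory; that theory is precisely the equational theory of $\m F_{\textup{fs}}(\mathbb{Z})$, whose class of models is $\mathbb{V}(\m F_{\textup{fs}}(\mathbb{Z}))$. Combining the two inclusions gives $\mathsf{DLPG}=\mathbb{V}(\m F_{\textup{fs}}(\mathbb{Z}))$.

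For the final clause I would argue either identically, using the biconditional of Corollary~\ref{c: cycle} that $\varepsilon$ holds in $\mathsf{DLPG}$ iff it holds in $\mathbf{F(\mathbb{Z})}$, or more economically by squeezing: the inclusions $\mathbb{V}(\m F_{\textup{fs}}(\mathbb{Z}))\subseteq\mathbb{V}(\mathbf{F(\mathbb{Z})})\subseteq\mathsf{DLPG}$ hold because $\m F_{\textup{fs}}(\mathbb{Z})$ is a subalgebra of $\mathbf{F(\mathbb{Z})}$ and $\mathbf{F(\mathbb{Z})}\in\mathsf{DLPG}$; once the outer terms are known to be equal, the chain collapses to $\mathbb{V}(\mathbf{F(\mathbb{Z})})=\mathsf{DLPG}$. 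I do not expect any genuine obstacle here: all the substantive work is carried by Corollary~\ref{c: cycle}, and what remains is only the routine passage between ``having the same equational theory'' and ``generating the same variety.''
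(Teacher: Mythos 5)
Your proposal is correct and matches the paper's (implicit) argument: the paper derives this corollary directly from Corollary~\ref{c: cycle} together with the observations that $\m F(\mathbb{Z})$ is a distributive $\ell$-pregroup and $\m F_{\textup{fs}}(\mathbb{Z})$ is a subalgebra of it, which is exactly your combination of the shared equational theory with Birkhoff's theorem. You merely make explicit the routine passage from ``same equational theory'' to ``same generated variety,'' which the paper leaves unstated.
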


\begin{corollary}\label{c:decidability}
The equational theory of $\mathsf{DLPG}$ is decidable.
\end{corollary}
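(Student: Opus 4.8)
The plan is to convert the chain of equivalences in Corollary~\ref{c: cycle} into a terminating decision procedure. By that corollary, an equation $\varepsilon$ holds in $\mathsf{DLPG}$ if and only if it holds in every compatible surjection for $\varepsilon$, so it suffices to show that the compatible surjections for $\varepsilon$ can be effectively enumerated and that failure in each can be effectively tested. All the mathematical content has already been established; what remains is to verify that each ingredient is algorithmic.

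First I would reduce $\varepsilon$ to disjunctive intentional form $1\leq w_{1}\vee\ldots\vee w_{k}$. This is an effective rewriting: the inverses $^\ell$ and $^r$ are pushed down to the variables using the anti-homomorphism and De~Morgan identities, and the meets are eliminated using lattice distributivity and the distribution of multiplication over meet, exactly as in the lemma establishing intentional form. Next I would mechanically produce the finite set $\Delta_{\varepsilon}$: the set $FS_\varepsilon$ of final subwords is finite, $S_\varepsilon$ is finite (only finitely many $m\in\mathbb{Z}$ occur with $x_i^{(m)}$ in $\varepsilon$), and each $\Delta_{x_i,m}^{v}$ is an explicitly listable finite set of terms, so $\Delta_{\varepsilon}$ is finite and computable.

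The central observation is that a compatible surjection is a map $\varphi:\Delta_{\varepsilon}\rightarrow\mathbb{N}_q$ with $q\leq|\Delta_{\varepsilon}|$, and there are only finitely many such maps, since there are finitely many admissible values of $q$ and finitely many functions between the finite sets $\Delta_{\varepsilon}$ and $\mathbb{N}_q$. For each candidate $\varphi$ I would check conditions (i)--(iii) of the definition: condition~(i) that each $g_i$ is a well-defined order-preserving partial function on $\mathbb{N}_q$; condition~(ii) that ${\diagcov}\subseteq{\prec}$ on $\mathbb{N}_q$; and condition~(iii) that $\varphi(x_i^{(m)}u)=g_i^{[m]}(\varphi(u))$ for the relevant terms. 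Each test is a finite computation on the finite c-chain $(\mathbb{N}_q,\leq,\diagcov)$: the operations $f\mapsto f^{[\ell]}$ and $f\mapsto f^{[r]}$ are given by explicit relational definitions over this finite structure, so each iterated inverse $g_i^{[m]}$ is obtained by finitely many applications and is computable. Hence membership in the finite set of compatible surjections for $\varepsilon$ is decidable, and for each such $\varphi$ failure of $\varepsilon$ amounts to the finitely-many already-computed comparisons $\varphi(w_1),\ldots,\varphi(w_k)<\varphi(1)$.

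The algorithm therefore enumerates the finitely many maps $\varphi:\Delta_{\varepsilon}\rightarrow\mathbb{N}_q$ for $1\leq q\leq|\Delta_{\varepsilon}|$, retains those satisfying (i)--(iii), and reports that $\varepsilon$ holds in $\mathsf{DLPG}$ exactly when none of the retained surjections yields a failure; correctness is precisely Corollary~\ref{c: cycle}, and termination follows because the search space is finite and every individual test halts. I do not expect a genuine obstacle here, as the hard work resides in Sections~\ref{s: integral}--\ref{s: F(Z)}; the only point requiring care is confirming that each defining condition of a compatible surjection is genuinely effective, and in particular that the recursive computation of the iterated inverses $g_i^{[m]}$ on a finite c-chain is algorithmic, which is immediate from the explicit definitions of $g^{[\ell]}$ and $g^{[r]}$.
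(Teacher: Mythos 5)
Your proposal is correct and follows essentially the same route as the paper: reduce $\varepsilon$ to disjunctive intentional form, invoke Corollary~\ref{c: cycle} to replace validity in $\mathsf{DLPG}$ by validity in all compatible surjections, and exhaustively search the finitely many maps $\varphi:\Delta_{\varepsilon}\rightarrow\mathbb{N}_q$ with $q\leq|\Delta_{\varepsilon}|$, checking conditions (i)--(iii) and the failure condition effectively. The only difference is that the paper additionally extracts a crude doubly-exponential upper bound on the number of candidate surjections in terms of the length of $\varepsilon$, while you instead spell out more carefully why each individual test (in particular the computation of the iterated inverses $g_i^{[m]}$ on the finite c-chain) is algorithmic --- a point the paper leaves implicit.
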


\begin{proof}
Recall that every equation can be algorithmically transformed into an equivalent equation $\varepsilon$ that is in disjunctive intentional form. 
By Corollary~\ref{c: cycle}, $\varepsilon$ fails in $\mathsf{DLPG}$ iff it fails in some compatible surjection.
Since $\Delta_\varepsilon$ is finite, there are only finitely many options for compatible surjections  $\varphi:\Delta_\varepsilon \ra \mathbb{N}_q$, $q \leq |\Delta_\varepsilon|$, where $\varepsilon$ fails. By searching for all these finitely many $\varphi$'s we obtain an algorithm that verifies in finite time whether the equation holds in $\mathsf{DLPG}$. 

We provide a very crude upper bound for the number of compatible surjections for $\varepsilon$ in terms of the length $\ell$ of $\varepsilon$. First, note that $|FS_\varepsilon| \leq \ell$, $|Var| \leq \ell$, where $Var$ is the set of variables in $\varepsilon$, and $M\leq \ell$, where $M$ is the absolute value of the highest order of a residual in $\varepsilon$. So, $|S_\varepsilon|\leq |Var|\cdot M \cdot |FS_\varepsilon|\leq \ell^3$. Also, for every $x \in Var$, $v \in FS_\varepsilon$ and $m \in \mathbb{Z}$ with $|m| \leq M$, we have $|\Delta_{x, m}^v|\leq M \cdot 2^M\leq \ell \cdot 2^\ell$. Hence, $|\Delta_\varepsilon|\leq |S_\varepsilon| \cdot \ell \cdot 2^\ell \leq \ell^3 \cdot \ell \cdot  2^\ell=2^\ell \ell^4$.
So, the number of compatible surjections for $\varepsilon$ is at most the number of surjections from $\Delta_\varepsilon$ to an initial segment of $\mathbb{Z}^+$, which is at most $|\Delta_\varepsilon|^{|\Delta_\varepsilon|} \leq (2^\ell \ell^4)^{2^\ell \ell^4}$
and which is doubly exponential in $\ell$.
\end{proof}

\section{Compatible preorders}

In this section we provide an alternative and equivalent way of viewing compatible surjections for a given equation: compatible total preorders. This also provides a way to compare compatible preorders in the context of distrbutive $\ell$-pregroups to compatible preorders in the setting of $\ell$-groups and thus to total right orders on groups; see \cite{CM} and \cite{CGMS}. 

First, note that given any onto map $\varphi:\Delta\rightarrow I$ from a set $\Delta$ to an initial segment $I$ of $\mathbb{Z}^+$, we can define the binary relation $\preorder_{\varphi}$ on $\Delta$ by: $u\preorder_{\varphi} v$ iff $\varphi(u)\leq_{\mathbb{Z}^+}\varphi(v)$, for $u,v\in\Delta$. It is easy to prove  that $\preorder_{\varphi}$ is a preorder (it is reflexive and transitive) that is also total. A preorder $\preorder$ is called \emph{total} if for all $a,b\in L$, $a\preorder b$ or $b\preorder a$. Also, for all $u,v\in\Delta$ we write  $u\triangleleft v$ if $u\preorder v$ and $v\not\preorder u$.

Conversely, if $\preorder$ is a preorder on $\Delta$, it is well known that the relation
${\equiv}=\{(a,b)\in\Delta^2:a\preorder b\text{ and }b\preorder a \}$ is an equivalence relation and that the relation $\preorder_{\equiv}$ on the quotient set $\Delta / {\equiv}$ is a partial order, where $[u] \preorder_{\equiv} [v]$ iff $u \preorder v$. Also, if $\preorder$ is a total preorder, then $\preorder_{\equiv}$ is a total order. If $\Delta$ is finite, then the chain $(\Delta / {\equiv}, \preorder_{\equiv})$ is isomorphic (by a unique isomorphism) to the initial segment $\m \mathbb{N}_q$ of  $\mathbb{Z}^+$, where $q$ is the size of the chain. We denote by $\varphi_\preorder: \Delta \ra \mathbb{N}_q$ the composition of the quotient map and the isomorphism of the chains. 

  Furthermore, if $\Delta$ is a finite set, the maps $\varphi \mapsto {\preorder_{\varphi}}$ and ${\preorder} \mapsto \varphi_\preorder$ are mutually inverse bijections between total preorders on $\Delta$ and surjections from $\Delta$ to initial segments of $\mathbb{Z}^+$. Given an equation $\varepsilon$ in intentional form, we will lift this bijection to compatible surjections for $\varepsilon$ and compatible preorders for $\varepsilon$, by translating the condition for a compatible surjection in terms of the corresponding total preorder.

  \medskip

A total preorder $\preorder$ on the set $\Delta_{\varepsilon}$ is said to be a \emph{compatible preorder} if it satisfies the following conditions:
\begin{itemize}
\item[(i)] For all $i \in \{1,\ldots, n\}$, if $u,v, x_iu,x_iv \in{\Delta_{\varepsilon}}$ and $u\preorder v$, then $x_i u\preorder x_i v$.
\item[(ii)] If $u,-u\in\Delta_{\varepsilon}$, then $-u\triangleleft  u$ and for all $v\in\Delta_{\varepsilon}$, $-u\triangleleft v \Rightarrow u\preorder v$. Also, if $u,+u\in\Delta_{\varepsilon}$, then $u\triangleleft +u$ and for all $v\in\Delta_{\varepsilon}$, $u\triangleleft v \Rightarrow +u\preorder v$.
\item[(iii$\ell$)] If $i\in \{1,\ldots, n\}$, $j\in\mathbb{Z}^+$, and $v, x_i^{(j)}v\in\Delta_{\varepsilon}$, then $-x_i^{(j)}v$, $x_i^{(j-1)}{-}x_i^{(j)}v$, $x_i^{(j-1)}x_i^{(j)}v \in\Delta_{\varepsilon}$ and
\[
x_i^{(j-1)}{-}x_i^{(j)}v  \triangleleft v\preorder  x_i^{(j-1)}x_i^{(j)}v.
\]
\item[(iii$r$)]  If $i\in \{1,\ldots, n\}$, $j\in\mathbb{Z}^-$, and $v, x_i^{(j)}v\in\Delta_{\varepsilon}$, then $+x_i^{(j)}v$, $x_i^{(j+1)}{+}x_i^{(j)}v$, $x_i^{(j+1)}x_i^{(j)}v\in\Delta_{\varepsilon}$ and
\[
x_i^{(j+1)}x_i^{(j)}v\preorder v\triangleleft x_i^{(j+1)}{+}x_i^{(j)}v.
\]
\end{itemize}

\begin{theorem}
Given an equation $\varepsilon$ in intentional  form, the maps $\varphi \mapsto {\preorder_{\varphi}}$ and ${\preorder} \mapsto \varphi_\preorder$ are mutually inverse bijections between  compatible surjections for $\varepsilon$ and  compatible preorders for $\varepsilon$.
\end{theorem}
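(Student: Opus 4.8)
The plan is to leverage the bijection, recalled in the discussion preceding the theorem, between total preorders on the finite set $\Delta_\varepsilon$ and surjections from $\Delta_\varepsilon$ onto initial segments $\mathbb{N}_q$ of $\mathbb{Z}^+$, realized by the mutually inverse maps $\varphi \mapsto {\preorder_\varphi}$ and ${\preorder} \mapsto \varphi_\preorder$. Since these are already inverse bijections at the unrestricted level, it suffices to show that a surjection $\varphi$ is a compatible surjection for $\varepsilon$ if and only if its associated total preorder $\preorder := {\preorder_\varphi}$ is a compatible preorder for $\varepsilon$; the claimed restricted bijection then follows formally, because $\preorder_{\varphi_\preorder} = \preorder$ and $\varphi_{\preorder_\varphi} = \varphi$. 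Throughout I fix such a pair and use the translations $u \preorder v \Leftrightarrow \varphi(u) \leq \varphi(v)$ and $u \triangleleft v \Leftrightarrow \varphi(u) < \varphi(v)$.

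First I would match conditions (i) and (ii) directly. For (i): condition (i) for $\preorder$ asserts $\varphi(u) \le \varphi(v) \Rightarrow \varphi(x_i u) \le \varphi(x_i v)$; taking $\varphi(u) = \varphi(v)$ shows the relation $g_i$ is single-valued, and the general inequality is exactly order preservation, so (i) for $\preorder$ is equivalent to the surjection condition that $g_i$ is an order-preserving partial function. For (ii): the two clauses for $-u$, namely $-u \triangleleft u$ together with ``$-u \triangleleft w \Rightarrow u \preorder w$ for all $w$'', translate to $\varphi(-u) < \varphi(u)$ with no element of $\mathbb{N}_q$ strictly between them, i.e. $\varphi(-u) \prec \varphi(u)$; with the dual clause for $+u$ this is precisely ${\diagcov} \subseteq {\prec}$. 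I would also note that the membership assertions appearing in (iii$\ell$) and (iii$r$) (that $-x_i^{(j)}v$, $x_i^{(j-1)}{-}x_i^{(j)}v$, $x_i^{(j-1)}x_i^{(j)}v$ lie in $\Delta_\varepsilon$) are the syntactic analogues of the points in $\Delta^a_{f,m}$ and hold by the construction of $\Delta_\varepsilon$ out of the sets $\Delta_{x_i,j}^{v}$, independently of $\varphi$ or $\preorder$; hence only the order parts of (iii$\ell$), (iii$r$) carry content.

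The main step is to prove, assuming (i) and (ii), that condition (iii) for $\varphi$ is equivalent to (iii$\ell$) and (iii$r$) for $\preorder$, which I would do by induction on $|m|$; I describe the positive case, the case $m<0$ being dual and $m=0$ immediate from the definition of $g_i$. Fix $i$, $j \ge 1$ and $v, x_i^{(j)}v \in \Delta_\varepsilon$, and set $b := \varphi(x_i^{(j)}v)$ and $a := \varphi(-x_i^{(j)}v)$. By (ii), $(a,b) \in {\diagcov}$ and $a \prec b$, so $a$ is the unique lower cover of $b$. The inductive hypothesis, i.e. (iii) at order $j-1$ applied to $x_i^{(j)}v$ and to $-x_i^{(j)}v$, yields $g_i^{[j-1]}(b) = \varphi(x_i^{(j-1)}x_i^{(j)}v)$ and $g_i^{[j-1]}(a) = \varphi(x_i^{(j-1)}{-}x_i^{(j)}v)$; in particular $a,b \in Dom(g_i^{[j-1]})$. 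Unfolding the definition of ${}^{[\ell]}$ with the covering pair $a \diagcov b$, we have $g_i^{[j]}(\varphi(v)) = b$ if and only if $g_i^{[j-1]}(a) < \varphi(v) \le g_i^{[j-1]}(b)$, that is, if and only if $\varphi(x_i^{(j-1)}{-}x_i^{(j)}v) < \varphi(v) \le \varphi(x_i^{(j-1)}x_i^{(j)}v)$. Since $b = \varphi(x_i^{(j)}v)$, the equality $g_i^{[j]}(\varphi(v))=b$ is precisely condition (iii) at order $j$ for $v$, while the chain of inequalities is exactly $x_i^{(j-1)}{-}x_i^{(j)}v \triangleleft v \preorder x_i^{(j-1)}x_i^{(j)}v$, the order part of (iii$\ell$). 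Ranging over all admissible $(i,j,v)$ gives the equivalence of (iii) with (iii$\ell$) and (iii$r$).

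The main obstacle is this last inductive argument: one must carefully unwind the recursion $g_i^{[j]} = (g_i^{[j-1]})^{[\ell]}$ on the c-chain and observe that the distinguished covering pair supplied by condition (ii) is exactly the pair that the definition of ${}^{[\ell]}$ selects, so that the order inequalities of (iii$\ell$) coincide with the defining inequalities of the residual. The purely syntactic membership facts about $\Delta_\varepsilon$ that make the terms of (iii$\ell$)/(iii$r$) available must be checked against the construction of $\Delta_{x_i,j}^{v}$, but this is routine. Once the equivalence of all three pairs of conditions is established, the bijection of the theorem follows at once from the unrestricted bijection recalled at the start.
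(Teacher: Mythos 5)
Your proposal is correct and follows essentially the same route as the paper: both reduce to the general bijection between total preorders on the finite set $\Delta_\varepsilon$ and surjections onto initial segments of $\mathbb{Z}^+$, and then translate conditions (i), (ii), and (iii) one by one, with (ii) handled via surjectivity of $\varphi$ and (iii) handled by unfolding $g_i^{[j]}=(g_i^{[j-1]})^{[\ell]}$ at the covering pair $\varphi(-x_i^{(j)}v)\diagcov\varphi(x_i^{(j)}v)$, whose uniqueness as a lower cover makes the existential in the definition of ${}^{[\ell]}$ collapse to the specific pair. Your explicit induction on $|m|$ (and the remark that single-valuedness of $g_i$ falls out of condition (i) by taking $u\equiv v$) merely makes precise what the paper carries out as a chain of equivalences between quantified families of conditions, so the two arguments are the same in substance.
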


\begin{proof}
Suppose $\varepsilon$ is an equation in the language of DLPG in the intentional form $1\leq w_{1}\vee\ldots\vee w_{k}$ over variables $x_1, \ldots, x_n$; recall that $\Delta_\varepsilon$ is a finite set. Given an onto map 
$\varphi:\Delta_{\varepsilon}\rightarrow I$, where $I$ is an initial segment of $\mathbb{Z}^+$,
we will verify that $\preorder_{\varphi}$ is a total preorder.
For all $u\in\Delta_{\varepsilon}$, we have $u\preorder_{\varphi} u$ since $\varphi(u)\leq_{\mathbb{Z}^+}\varphi(u)$. Also, for all $u,v,w\in \Delta_{\varepsilon}$, $u\preorder_{\varphi} v$ and $v\preorder_{\varphi} w$ implies $\varphi(u)\leq_{\mathbb{Z}^+}\varphi(v)$ and $\varphi(v)\leq_{\mathbb{Z}^+}\varphi(w)$, so $\varphi(u)\leq_{\mathbb{Z}^+}\varphi(w)$, i.e.,  $u\preorder_{\varphi} w$. Finally, for all $u,v\in\Delta_{\varepsilon}$, 
$\varphi(u)\leq_{\mathbb{Z}^+}\varphi(v)$ or $\varphi(v)\leq_{\mathbb{Z}^+}\varphi(u)$, so $u\preorder_{\varphi} v$ or $v\preorder_{\varphi} u$. Therefore, $\preorder_{\varphi}$ is a total preorder.

Conversely, if $\preorder$ is a preorder on $\Delta_\varepsilon$, then
${\equiv} = {\preorder} \cap {\preorder^{-1}}$ is an equivalence relation  on $\Delta_\varepsilon$ and  $\preorder_{\equiv}$ is a  total order on the quotient set $\Delta_\varepsilon / {\equiv}$:  for all $u,v\in\Delta_{\varepsilon}$, 
$u\preorder v$ or $v\preorder u$, so
$[u]\preorder_{\equiv} [v]$ or $[v]\preorder_{\equiv} [u]$. Since $\Delta_\varepsilon$ is finite, the chain $(\Delta_\varepsilon / {\equiv}, \preorder_{\equiv})$ is isomorphic to the initial segment $\mathbb{N}_q$ of $\mathbb{Z}^+$, where $q=|\Delta_\varepsilon / {\equiv}|$, by a unique isomorphism $i_\varepsilon$. Then the composition $\varphi_\preorder=i_\varepsilon \circ \varphi_\equiv: \Delta_{\varepsilon} \ra \mathbb{N}_q$ is a map onto an  initial segment of $\mathbb{Z}^+$.

Moreover, if $\varphi:\Delta_{\varepsilon}\rightarrow I$ is a map onto an initial segment of $\mathbb{Z}^+$, then it factors uniquely as $\varphi=i_\varepsilon \circ \varphi_{\equiv_{\varphi}}$, where $\equiv_{\varphi}$ is the kernel of $\varphi$, $\varphi_{\equiv_{\varphi}}: \Delta_{\varepsilon}\rightarrow \Delta_\varepsilon / {\equiv_\varphi}$ is the quotient map and $i_\varepsilon: \Delta_\varepsilon / {\equiv_\varphi} \ra I$ is the unique isomorphism of these two finite chains. So   $\varphi_{\preorder_{\varphi}}=i_\varepsilon \circ \varphi_{\equiv_{\varphi}}=\varphi$, because ${\equiv}_\varphi= {\preorder_{\varphi}} \cap {\preorder_{\varphi}^{-1}}$. 
Also, if $\preorder$ is a total preorder on $\Delta_\varepsilon$ and $u,v\in\Delta_{\varepsilon}$, then $u\preorder_{\varphi_{\preorder}}v$ iff $\varphi_{\preorder}(u)\leq \varphi_{\preorder}(v)$ iff 
$i_\varepsilon(\varphi_{\equiv}(u))\leq i_\varepsilon(\varphi_{\equiv}(v))$ iff
${\varphi_{\equiv}}(u)\leq{\varphi_{\equiv}}(v)$ iff $[u]\preorder_{\equiv}[v]$ iff $u\preorder v$. So ${\preorder}={\preorder_{\varphi_{\preorder}}}$. Therefore, the  maps $\varphi \mapsto {\preorder_{\varphi}}$ and ${\preorder} \mapsto \varphi_\preorder$ are mutually inverse bijections between  total preorders on $\Delta_\varepsilon$ and maps from $\Delta_\varepsilon$ onto initial segments of $\mathbb{Z}^+$.

We will now show that the conditions in the definition of a compatible surjection $\varphi$ can be translated directly to the corresponding compatible preorder ${\preorder}: = {\preorder_\varphi}$. 

(i) The condition that $g_i$ is order-preserving is equivalent to the demand that for $u,v,x_i u,x_i v\in\Delta_{\varepsilon}$,  $\varphi(u)\leq_{\mathbb{Z}^+}\varphi(v) \Rightarrow \varphi(x_i u)\leq_{\mathbb{Z}^+} \varphi(x_i v)$, which is equivalent to 
$u\preorder v \Rightarrow x_i u\preorder x_i v$. 

(ii) The condition that ${\diagcov} \subseteq {\prec}$ is equivalent to: $u,-u\in\Delta_{\varepsilon} \Rightarrow \varphi(-u)\prec  \varphi(u)$ and
$u,+u\in\Delta_{\varepsilon} \Rightarrow \varphi(u)\prec  \varphi(+u)$.
Since $I$ is a chain, for $a,b \in I$, $a \prec b$ iff ($a < b$ and for all $c \in I$, $a<c \Rightarrow b \leq c$); also recall that $\varphi$ is onto, so every $c \in I$ is of the form $\varphi(v)$ for some $v\in\Delta_{\varepsilon}$. Therefore, the first implication is equivalent to: if $u,-u\in\Delta_{\varepsilon}$, then
$\varphi(-u)<  \varphi(u)$ and for all $v\in\Delta_{\varepsilon}$, $\varphi(-u)< \varphi(v) \Rightarrow \varphi(u) \leq \varphi(v)$.
Moreover this is equivalent to: if $u,-u\in\Delta_{\varepsilon}$, then
$-u\triangleleft  u$ and for all $v\in\Delta_{\varepsilon}$, $-u\triangleleft v \Rightarrow u\preorder v$.
Likewise, the other implication is equivalent to: if $u,+u\in\Delta_{\varepsilon}$, then
$u\triangleleft  +u$ and for all $v\in\Delta_{\varepsilon}$, $u\triangleleft v \Rightarrow +u\preorder v$. 


(iii$\ell$) Note that by the construction of $\Delta_{\varepsilon}$, if $ x_i^{(m)}u\in\Delta_{\varepsilon}$ then we also have $-x_i^{(m)}u,x_i^{(m-1)}{-}x_i^{(m)}u,x_i^{(m-1)}x_i^{(m)}u\in\Delta_{\varepsilon}$. So, the condition $i\in \{1,\ldots,n\}$, $m\in\mathbb{Z}^+$, $u,  x_i^{(m)}u\in\Delta_{\varepsilon}$ imply $g_i^{[m]}(\varphi(u))=\varphi(x_i^{(m)}u)$ is equivalent to the implication: $i\in \{1,\ldots,n\}$, $m\in\mathbb{Z}^+$, $u,  x_i^{(m)}u\in\Delta_{\varepsilon}$ imply  $\varphi(x_i^{(m-1)}{-}x_i^{(m)}u)=g^{[m-1]}(\varphi(-x_i^{(m)}u))$, $g_i^{[m]}(\varphi(u))=\varphi(x_i^{(m)}u)$, $g^{[m-1]}\varphi(x_i^{(m)}u)= \varphi(x_i^{(m-1)}x_i^{(m)}u)$. This is in turn equivalent to the implication:  $i\in \{1,\ldots,n\}$, $m\in\mathbb{Z}^+$, $u, x_i^{(m)}u\in\Delta_{\varepsilon}$ imply $\varphi(x_i^{(m-1)}{-}x_i^{(m)}u)=g^{[m-1]}(\varphi(-x_i^{(m)}u)) <u\leq g^{[m-1]}\varphi(x_i^{(m)}u)= \varphi(x_i^{(m-1)}x_i^{(m)}u)$.
Finally, translating this to the language of compatible preorders, we obtain the statement: for all $i\in \{1,\ldots,n\}$, $m\in\mathbb{Z}^+$ and $u\in\Delta_{\varepsilon}$, if $ x_i^{(m)}u\in\Delta_{\varepsilon}$ then $-x_i^{(m)}u,x_i^{(m-1)}{-}x_i^{(m)}u,x_i^{(m-1)}x_i^{(m)}u\in\Delta_{\varepsilon}$ and $x_i^{(m-1)}{-}x_i^{(m)}u  \triangleleft u\preorder  x_i^{(m-1)}x_i^{(m)}u$.

(iiir) The proof is similar to (iii$\ell$) .
\end{proof}

We say that an equation $1\leq w_{1}\vee\ldots\vee w_{k}$ in intentional form \emph{fails in a compatible preorder} $\preorder$ if $w_1, \ldots, w_k \triangleleft 1$.


\begin{corollary}
An equation  fails in a compatible preorder iff it fails in the corresponding compatible surjection.
\end{corollary}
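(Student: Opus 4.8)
The plan is to exploit the bijection established in the preceding theorem between compatible surjections $\varphi:\Delta_\varepsilon \to \mathbb{N}_q$ and compatible preorders $\preorder$ for $\varepsilon$, and to reduce the statement to a purely order-theoretic translation of the strict relation. Recall that corresponding objects are related by $u \preorder v \Leftrightarrow \varphi(u) \leq_{\mathbb{Z}^+} \varphi(v)$ for all $u,v \in \Delta_\varepsilon$ (this is how $\preorder_\varphi$ is defined, and the theorem shows $\preorder = \preorder_{\varphi_\preorder}$ and $\varphi = \varphi_{\preorder_\varphi}$). Since failure in $\varphi$ is the condition $\varphi(w_1),\ldots,\varphi(w_k) < \varphi(1)$ and failure in $\preorder$ is the condition $w_1,\ldots,w_k \triangleleft 1$, the entire corollary follows once I verify that, under the correspondence, $u \triangleleft v$ holds iff $\varphi(u) <_{\mathbb{Z}^+} \varphi(v)$, for all $u,v \in \Delta_\varepsilon$.

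The key step is this translation of the strict part. By definition $u \triangleleft v$ means $u \preorder v$ and $v \not\preorder u$, which under the correspondence reads $\varphi(u) \leq_{\mathbb{Z}^+} \varphi(v)$ and $\varphi(v) \not\leq_{\mathbb{Z}^+} \varphi(u)$. Because $\mathbb{N}_q$ is a chain, $\varphi(v) \not\leq_{\mathbb{Z}^+} \varphi(u)$ is equivalent to $\varphi(u) <_{\mathbb{Z}^+} \varphi(v)$, and this strict inequality already subsumes $\varphi(u) \leq_{\mathbb{Z}^+} \varphi(v)$. Hence $u \triangleleft v$ iff $\varphi(u) <_{\mathbb{Z}^+} \varphi(v)$, as needed.

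Instantiating this equivalence at $u = w_j$ and $v = 1$ for each $j \in \{1,\ldots,k\}$ gives $w_j \triangleleft 1 \Leftrightarrow \varphi(w_j) <_{\mathbb{Z}^+} \varphi(1)$; conjoining over all $j$ shows that $\varepsilon$ fails in $\preorder$ exactly when it fails in the corresponding $\varphi$. I expect no real obstacle here: the content is entirely in the single observation that the strict order of a total preorder corresponds to strict inequality in the quotient chain, and everything else is bookkeeping inherited from the bijection of the previous theorem. The only point requiring a moment of care is the appeal to totality of $\mathbb{N}_q$ (equivalently, totality of the preorder) to pass between $\varphi(v) \not\leq_{\mathbb{Z}^+}\varphi(u)$ and $\varphi(u) <_{\mathbb{Z}^+}\varphi(v)$; this is exactly why the framework is set up with total preorders throughout.
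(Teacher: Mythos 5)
Your proposal is correct and matches the argument the paper intends: the corollary is stated without proof precisely because it reduces, via the bijection $\varphi \mapsto {\preorder_\varphi}$, ${\preorder} \mapsto \varphi_\preorder$ of the preceding theorem, to the observation that $u \triangleleft v$ iff $\varphi(u) <_{\mathbb{Z}^+} \varphi(v)$, which is exactly your translation of the strict relation using totality of the chain $\mathbb{N}_q$. Your handling of the strict part (noting $\varphi(v) \not\leq \varphi(u)$ is equivalent to $\varphi(u) < \varphi(v)$ in a chain, subsuming $\varphi(u) \leq \varphi(v)$) is the one point of substance, and you got it right.
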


%

As a result, the algorithm described in section 3 can alternatively be realized by searching for all possible compatible preorders.

\begin{example}
  To check the validity $1\leq x^{\ell}x$ in $\mathsf{DLPG}$ we first consider the sets $ \Delta_{x,0}^{1}=\{1,x\}$ and 
  $  \Delta_{x,1}^{x}=\{x,x^{\ell}x,-x^{\ell}x,xx^{\ell}x,x{-}x^{\ell}x\}$. Their union is the set $\Delta_{\varepsilon}=\{1,x,x^{\ell}x,-x^{\ell}x,xx^{\ell}x,x{-}x^{\ell}x\}$.
  We observe that the preorder $\preorder$, where
  $ -x^{\ell}x \; \triangleleft \; x^{\ell}x \; \triangleleft \; 1, x{-}x^{\ell}x \; \triangleleft \;  x, xx^{\ell}x$,
  is a compatible preorder in which $1\leq x^{\ell}x$ fails because $x^{\ell}x \triangleleft 1$. 
   The associated compatible surjection is $\varphi:\Delta_{\varepsilon}\rightarrow\{1,2,3,4\}$ where:
$$    \varphi(x) =\varphi(xx^{\ell}x)=4 \quad
   \varphi(1) =\varphi(x{-}x^{\ell}x)=3\quad
   \varphi(x^{\ell}x) =2\quad
   \varphi(-x^{\ell}x) =1$$
  The resulting diagram is   $(\mathbb{N}_4,\leq,\diagcov,g)$, where $\leq$ is induced by $\mathbb{Z}$,  $\diagcov=\{(1,2)\}$ and $g=\{(3,4),(2,4),(1,3)\}$, and it is  
  shown in Figure~\ref{f:building D algorithm}. 
  \end{example}
  
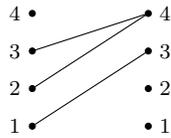
\begin{figure}[ht]\def\eq{=}
\begin{center}
{\scriptsize
\begin{tikzpicture}
[scale=0.5]
\node[fill,draw,circle,scale=0.3,left](2) at (0,2){};
\node[left](2.1) at (-0.2,2){$4$};
\node[fill,draw,circle,scale=0.3](2r) at (3,2){};
\node[right](2.1r) at (3.1,2){$4$};
\node[fill,draw,circle,scale=0.3,left](1) at (0,1){};
\node[left](1.1) at (-0.2,1){$3$};
\node[fill,draw,circle,scale=0.3](1r) at (3,1){};
\node[right](1.1r) at (3.1,1){$3$};
\node[fill,draw,circle,scale=0.3,left](0) at (0,0){};
\node[left](0.1) at (-0.2,0){$2$};
\node[fill,draw,circle,scale=0.3](0r) at (3,0){};
\node[right](0.1r) at (3.1,0){$2$};
\node[fill,draw,circle,scale=0.3,left](-1) at (0,-1){};
\node[left](-1.1) at (-0.2,-1){$1$};
\node[fill,draw,circle,scale=0.3](-1r) at (3,-1){};
\node[right](-1.1r) at (3.1,-1){$1$};
\draw[-](1)--(2r);
\draw[-](0)--(2r);
\draw[-](-1)--(1r);

\end{tikzpicture}
}
\caption{A diagram where  $1\leq x^{\ell}x$ fails}
\label{f:building D algorithm}
\end{center}
\end{figure}

\begin{example}
We now consider the equation $1\leq x^{\ell}\vee x$ and show that it holds in $\mathsf{DLPG}$. We first consider the sets
\begin{align*}
    \Delta_{x,1}^{1}&=\{1,x^{\ell},-x^{\ell},xx^{\ell},x{-}x^{\ell}\}\\
    \Delta_{x,0,}^{1}&=\{1,x\}\\
    \Delta_{\varepsilon}&=\{e,x,x^{\ell},{-}x^{\ell},xx^{\ell},x{-}x^{\ell}\}
\end{align*}
Moreover, we argue that there is no  compatible preorder $\preorder$ in which the equation fails.  Since $1\leq x^{\ell}\vee x$ fails, we have $x^{\ell}\triangleleft 1$ and $x\triangleleft 1$, and by (iii$\ell$) we get $1\preorder xx^{\ell}1$ (applied to $v=1$ and $m=1$). So, by transitivity $x\triangleleft xx^{\ell}$.
Also, by (i), $x^{\ell}\triangleleft 1$ yields $xx^{\ell} \preorder x$, which contradicts $x\triangleleft xx^{\ell}$.
\end{example}

\textbf{Acknowledgements:} We would like to thank Rick Ball and Peter Jipsen for stimulating discussions on distributive $\ell$-pregroups.

\end{document}